	\newcommand{\R}{{\mathbb R}}
	\newcommand{\N}{{\mathbb N}}
	\newtheorem*{defi}{Definition}
	\newtheorem{lem}{Lemma}
	\newtheorem{prop}{Proposition}
	\newtheorem{cor}{Corollary}
	\newtheorem{theo}{Theorem}
	\theoremstyle{remark}
	\newtheorem{rem}{Remark}
\begin{document}
\title{Perimeters, uniform enlargement and high dimensions
}
\author{F. Barthe\thanks{Partially supported by ANR 2011 BS01 007 01 (GeMeCoD project)} \, and B. Huou
\\ Universit\'e de Toulouse, CNRS}
\maketitle

\begin{abstract}
We study the isoperimetric problem in product spaces equipped with the uniform distance.
Our main result is a characterization of isoperimetric inequalities which, when satisfied on a space, are still valid for the product spaces, up a to a constant which does not depend 
on the number of factors. Such dimension free bounds have applications to the study of
influences of variables.
\end{abstract}

\section{Introduction}
 Let $(X,d,\mu)$ denote a metric probability space, where $X$ is separable and $\mu$ is a Borel probability measure on $(X,d)$. For a Borel subset $A$ of $X$, we define, for $r > 0$, the open r-neighbourhood of $A$ by $A_r = \left\{x \in X \,\big| \, d(x,A) < r \right\}$, and its outer and inner boundary measures (also called Minkowski contents) by

\begin{equation*}
\mu^+(A) = \underset{r \rightarrow 0^+}{\liminf}~\frac{\mu\left(A_r\right)-\mu\left(A\right)}{r},\quad \mu^-(A)=\mu^+(X\setminus A).
\end{equation*}
The isoperimetric problem consists in obtaining sharp lower bounds on the above quantities
in terms of the measure $\mu(A)$.
The isoperimetric function of $(X,d,\mu)$, denoted by $I_{(X,d,\mu)}$ (or simply  $I_{\mu}$ 
when there is no ambiguity on the underlying metric space), is defined for $p \in \left[0,1\right]$ as follows :

\begin{eqnarray}\label{a}
I_{\mu}(p) &=&\underset{A \subseteq X ;~\mu(A)=p}{\inf}\min \big(\mu^+(A),\mu^-(A)\big)\\
&=&\underset{A \subseteq X ;~\mu(A)\in \{p,1-p\}}{\inf} \mu^+(A)
\end{eqnarray}
where the infimum is taken over all Borel subsets $A$ of $X$. As we can see from the definition, $I_{\mu}$ is the largest function such that, for every $A \subseteq X$, $\mu^+(A) \geq I_{\mu}\left(\mu(A)\right)$ and for every $t \in \left[0,1\right]$, $I_{\mu}(t) = I_{\mu}(1-t)$. Notice also that $I_{\mu}(0) = I_{\mu}(1) = 0$.

\medskip

\noindent 
Given metric probability spaces $(X_i, d_i,\mu_i)$, $i=1,\ldots, n$, several metric 
structures can be considered on the product probability space $(X_1\times \cdots\times X_n,
\mu_1\otimes\cdots\otimes \mu_n)$. Throughout this paper, we equip this product with 
the supremum distance $d=d_\infty^{(n)}$ defined by
$$ d_\infty^{(n)}\big( (x_1,\ldots,x_n), (y_1,\ldots,y_n)\big):=\max_i d_i(x_i,y_i).$$
We shall also say that $d_\infty^{(n)}$ is the $\ell_\infty$-combination of the distances
$d_i$, $1\le i\le n$.
The isoperimetric problem has been intensively studied in the Riemannian setting, where
the geodesic distance on a product manifold is the $\ell_2$-combination of the geodesic 
distance on the factors. Hence, from a geometric viewpoint, the choice of the $\ell_\infty$-combination is less natural than the one of the $\ell_2$-combination 
$d_2^{(n)}((x_i)_{i=1}^n, (y_i)_{i=1}^n)=\big(\sum_i d(x_i,y_i)^2\big)^{1/2}$. Nevertheless,
the study of the uniform enlargement has various motivations. We briefly explain some of them. 

    Firstly the isoperimetric problem for the uniform enlargement is technically easier  to deal with in the setting of product spaces, due to the product structure of metric balls.
    This often allows to work by comparisons. For instance Bollob\'as and Leader study this problem for the uniform measure on the cube in order to solve the discrete isoperimetric problem on the grid \cite{bolll91eiig}. Since $d_\infty^{(n)}\le d_2^{(n)}\le \sqrt{n}\, d_\infty^{(n)}$, it easily follows that 
                           $$ \frac{1}{\sqrt n}\,   I_{(X^n, d_\infty^{(n)}, \mu^n)}\le  I_{(X^n,d_2^{(n)}, \mu^n )}\le I_{(X^n, d_\infty^{(n)}, \mu^n)}.$$
This approach was used e.g. by Morgan \cite{morg06iep} for products of two Riemannian manifolds.

     Another motivation for studying the isoperimetric problem for the uniform enlargement
     is that it amounts to the study of the usual isoperimetric problem for a special class of sets.
      Let us explain this briefly in the setting of $\R^n$ equipped with a probability  
      measure $d\mu(x)=\rho(x) dx$ and the $\ell_\infty$ distance.  If $\rho$ is continuous
       and $A\subset \R^n$ is a domain with Lipschitz boundary, its outer Minkowski content is 
        $$ \mu^+(A)= \int_{\partial A} \| n_A(x)\|_1 \rho(x) d\mathcal H_{n-1}(x),$$
        where $n_A(x)$ is a  unit outer normal to $A$ at $x$ (unit for the Euclidean length).
      Consequently, the boundary measure for the uniform enlargement coincides
       with the usual one $   \int_{\partial A} \rho(x) d\mathcal H_{n-1}(x),$ for sets
    $A$ such that almost surely on $\partial A$ the outer normal is equal to a vector 
     of the canonical basis of $\R^n$ (or its opposite). These so-called rectilinear sets
      comprise cartesian products of intervals $I_1\times\cdots \times I_n$, their finite unions
       and their complements. Hence the isoperimetric problem for the uniform enlargement
       is closely connected to the usual isoperimetric problem restricted to the class of 
        rectilinear sets (actually, a smooth domain $A$ can be approximated by rectilinear
         sets in such a way that their boundary measures approach the one of $A$ for the
         uniform enlargement). Note that rectilinear sets naturally appear when studying
          the supremum of random variables, as $\{x\in\R^n\,\Big| \, \max_i x_i \in [a,b]\}$ is rectilinear.
         This was one of the original motivations of Bobkov and Bobkov-Houdr\'e \cite{bobk97ipue,bobkh00wdfc} for studying isoperimetry for
         the uniform enlargement. 
          
          Eventually, let us mention that isoperimetric inequalities for the uniform enlargement
          naturally appear in the recent extension by Keller, Mossel and Sen \cite{kellms12gi}
          of the theory of influences of variables to the continuous setting.

\medskip

Computing exactly the isoperimetric profile is a hard task, even in simple product spaces (see e.g. the survey article \cite{ros01ip}). However, various probabilistic questions involve sequences of independent random variables and require lower estimates on the isoperimetric profile of $n$-fold product spaces, which actually do not depend on the value of $n$.  First observe that for all integers $n\ge 1$,
  $$I_{(X^{n+1},~d_\infty^{(n+1)},~\mu^{n+1})}\le I_{(X^{n},~d_\infty^{(n)},~\mu^{n})},$$
  which holds because for every set $A\subset X^n$, $\mu^{n+1}(A\times X)=\mu^n(A)$
  and $(\mu^{n+1})^+(A\times X)=(\mu^n)^+(A)$. Therefore one may define the so-called
  infinite dimensional isoperimetric profile of $(X,d,\mu)$ as follows~: for $t\in[0,1]$,
  $$I_{\mu^\infty}(t):=\inf_{n\ge 1}  I_{(X^{n},~d_\infty^{(n)},~\mu^{n})}\le I_{(X,d,\mu)} .$$
  This quantity has been investigated by Bobkov \cite{bobk97ipue}, Bobkov and Houdr\'e \cite{bobkh00wdfc} and Barthe \cite{bart04idii}.
In particular, Bobkov has put forward a sufficient condition for the equality $I_{\mu^\infty}=I_\mu$ to hold.  This condition depends only on 
the function $I_\mu$ but it is rather restrictive. However it allowed to get a natural family  
of isoperimetric inequalities for which there exists $K>1$ such that 
 $I_\mu\ge I_{\mu^\infty}\ge \frac{1}{K} I_{\mu}$. We shall say in this case that the isoperimetric inequality with profile $I_\mu$
 tensorizes, up to a factor $K$.

The goal of this article is to provide a workable necessary and sufficient condition for the 
latter property to hold. We were inspired by a sufficient condition for tensorization, given by E. Milman \cite{milm09rcfi}
in the setting of $\ell_2$-distances on products. We now describe the plan of the paper.
 In the next section, we recall the known sufficient condition for $I_{\mu^\infty}=I_\mu$ and propose a new one. Building on this,
 we provide a sufficient condition for tensorization up to a factor in the third section. By a careful study of product sets, we actually
 show that this condition is also necessary. The final section draws consequences of our isoperimetric inequalities to the theory 
 of influences of variables~: following the argument of \cite{kellms12gi}, we obtain an extension of the Kahn-Kalai-Linial 
 theorem about the existence of a coordinate with a large influence.

\smallskip
Let us conclude this introduction with   some useful notation.
If $(Y,\rho)$ is a metric space we define the modulus of gradient of a locally Lipschitz function $f : Y \rightarrow \R$ by :
\begin{equation*}
\vert\nabla f\vert (x) = \underset{\rho(x,y) \rightarrow 0^+}{\limsup}\frac{\vert f(x)-f(y)\vert}{\rho(x,y)},
\end{equation*}
this quantity being zero at isolated points. Note that when the distance is given by a norm
on a vector space, that is $\rho(x,y)=\|x-y\|$, and when $f$ is differentiable, then
the modulus of gradient coincides with $\|D f(x)\|_*$.  We shall work under the following Hypothesis $(\mathcal H)$:  for every $m,n \in \N^*$ and for every locally Lipschitz function $f : X^{m+n} \rightarrow \R$, for $\mu^{m+n}$-almost every point $(x,y) \in X^m \times X^n$ :
\begin{equation*}
\vert\nabla f\vert(x,y) = \vert\nabla_x f\vert(x,y) + \vert\nabla_y f\vert(x,y).
\end{equation*}

This assumption holds in various cases : when $(X,d)$ is an open metric subset of a Minkowski space $\left(\R^n,\Vert.\Vert\right)$ and when $\mu$ is absolutely continuous with respect to Lebesgue's measure, or for  Riemannian manifolds when the measure is absolutely continuous with respect to the volume form (as a consequence of Rademacher's theorem of almost everywhere differentiability of Lipschitz functions). On the contrary, this hypothesis often fails in discrete settings.

\section{Sharp isoperimetric inequalities}

We start by recalling a couple of important results about extremal half-spaces 
for the isoperimetric problem. The first one below is due to Bobkov and Houdr\'e \cite{bobkh97scbi} and deals with the real line. Before stating it, we need to introduce some notations. Let $\mathcal M$ be the set of Borel probability measures on $\R$ which are concentrated on a possibly unbounded interval $(a,b)$ and have a density $f$ which is positive and continuous on $(a,b)$. For $\mu\in \mathcal M$, the distribution function
$F_\mu(x):= \mu((-\infty,x])$ is one-to-one from $(a,b)$ to $(0,1)$ and one may define
   $$ J_\mu(t)=f\big(F_\mu^{-1}(t)\big),\; t\in (0,1).$$
We may as well consider $J_\mu$ as a function on $[0,1]$ by setting $J_\mu(0)=J_\mu(1)=0$.
The value of $J_\mu(t)$ represents the boundary measure of the half-line of measure $t$
starting at $-\infty$. 
 Let $\mathcal L\subset \mathcal M$ denote the set of (non-Dirac) log-concave probability measures on $\R$ (the density $f$ is of the form $e^{-c}$ for some convex function $c$).

\begin{prop}[\cite{bobkh97scbi}]\label{prop1d}
The map $\mu\mapsto J_\mu$ is one-to-one between the set $\mathcal M$ and the
set of positive continuous functions on $(0,1)$. It is also one-to-one between the subset 
$\mathcal L$ of log-concave probability measures and the set of positive concave functions
on $(0,1)$.
Moreover for $\mu\in \mathcal M$,  the  following properties are equivalent :
\begin{enumerate}[label=(\roman*)]
\item $I_{\mu} = J_{\mu}$  (meaning for any $p \in \left(0,1\right)$, the infimum in \eqref{a} is attained for the set  $\left(-\infty,F_\mu^{-1}(p)\right]$,
\item the measure $\mu$ is symmetric around its median, i.e. $J_{\mu}$ is symmetric around $\frac{1}{2}$, and for all $p,q > 0$ such that $p+q < 1$,
\begin{equation*}
J_{\mu}(p+q) \leq J_{\mu}(p) + J_{\mu}(q).
\end{equation*}
\end{enumerate}
\end{prop}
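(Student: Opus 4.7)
Parts 1, 2 and the easy direction $(i) \Rightarrow (ii)$ follow from direct computation. For the bijection in Part 1, given a positive continuous $J$ on $(0,1)$, I would construct a measure $\mu$ with $J_\mu = J$ by solving the ODE $F'(x) = J(F(x))$: setting $H(t) := \int_{1/2}^{t} ds / J(s)$, let $F := H^{-1}$ on its image interval $(a,b) \subset \R$ and take $\mu$ with density $J \circ F$ on $(a,b)$, which yields $J_\mu = J$. This gives surjectivity, while injectivity (up to the translation invariance of $J_\mu$) follows from the same ODE. For Part 2, differentiating the identity $J = f \circ F^{-1}$ twice in the smooth case yields $J''(t) = (\log f)''(F^{-1}(t))/J(t)$, so concavity of $J$ and log-concavity of $f$ are equivalent, the general case being handled by smooth approximation. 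Finally, for $(i) \Rightarrow (ii)$, symmetry of $J_\mu$ around $1/2$ is immediate from $I_\mu(p) = I_\mu(1-p)$, and subadditivity follows by testing $I_\mu = J_\mu$ on the rectilinear set $A = (-\infty, F_\mu^{-1}(p)) \cup (F_\mu^{-1}(1-q), +\infty)$, whose $\mu$-measure is $p+q$ and whose outer and inner boundary measures both equal $J_\mu(p) + J_\mu(1-q) = J_\mu(p) + J_\mu(q)$ by the symmetry just established.

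The main content lies in the direction $(ii) \Rightarrow (i)$. The bound $I_\mu \geq J_\mu$ is sufficient since the half-line $(-\infty, F_\mu^{-1}(p))$ has boundary measure exactly $J_\mu(p)$. I first treat a set $A = \bigsqcup_{i=1}^n (a_i, b_i)$ which is a finite disjoint union of open intervals, writing $p_i := F_\mu(b_i) - F_\mu(a_i)$. For a bounded component, setting $\alpha_i := F_\mu(a_i)$ and $\beta_i := 1 - F_\mu(b_i)$ gives $\alpha_i, \beta_i > 0$ and $\alpha_i + \beta_i = 1 - p_i < 1$; the symmetry and subadditivity of $J_\mu$ then yield
\[J_\mu(p_i) = J_\mu(1 - p_i) \leq J_\mu(\alpha_i) + J_\mu(\beta_i) = f(a_i) + f(b_i),\]
while a half-line component contributes exactly $J_\mu(p_i)$. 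Summing over $i$ and iterating subadditivity one final time gives
\[\mu^+(A) = \sum_{i=1}^n \bigl(f(a_i) + f(b_i)\bigr) \geq \sum_{i=1}^n J_\mu(p_i) \geq J_\mu\Bigl(\sum_{i=1}^n p_i\Bigr) = J_\mu(\mu(A)).\]

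The step I expect to require the most care is extending this bound from finite unions of open intervals to an arbitrary Borel set $A$, since Minkowski content does not behave stably under naive set-theoretic approximation. I would handle this by reducing to open sets (which decompose into at most countably many open intervals), truncating to finitely many components whose total measure is arbitrarily close to that of the open set, applying the argument above, and then passing to the limit using the continuity of $J_\mu$; sets of infinite perimeter have $\mu^+ = +\infty$ and the bound is automatic. A secondary subtlety is justifying the second-derivative computation of Part 2 for non-smooth log-concave densities, which I would address by mollification together with the stability of log-concavity under convolution with symmetric log-concave kernels.
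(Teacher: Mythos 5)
The paper does not prove this proposition; it is quoted directly from Bobkov and Houdr\'e \cite{bobkh97scbi}, so there is no in-paper argument to compare against. Judged on its own terms, the computational parts of your outline are correct: the ODE $F'=J\circ F$ (equivalently $F=H^{-1}$ with $H(t)=\int_{1/2}^t ds/J(s)$) produces the inverse of $\mu\mapsto J_\mu$ (one-to-one being understood modulo translation, as you note); the identity $J''(t)=(\log f)''\big(F^{-1}(t)\big)/J(t)$ correctly reduces concavity of $J$ to log-concavity of $f$; the test set $\left(-\infty,F_\mu^{-1}(p)\right)\cup\left(F_\mu^{-1}(1-q),+\infty\right)$ gives $(i)\Rightarrow(ii)$; and the estimate $f(a_i)+f(b_i)\ge J_\mu(p_i)$ together with iterated subadditivity gives the sharp bound for finite disjoint unions of intervals.

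The only genuine gap is the one you flag, and as written your plan for closing it does not work. The reduction to open sets is fine and worth stating precisely: $A_r=(\bar A)_r$, so $\mu^+(A)=+\infty$ unless $\mu(\partial A)=0$, in which case $\mu^+(A)=\mu^+(\bar A)$; combined with $\mu^-(A)=\mu^+(X\setminus A)$ and $J_\mu(p)=J_\mu(1-p)$, it suffices to lower-bound $\mu^+(U)$ for $U$ open. But the step ``truncate to a finite sub-union $U_N$, apply the finite case, pass to the limit'' has no justification for its first inequality: Minkowski content is not monotone under inclusion, so $\mu^+(U_N)\le\mu^+(U)$ is exactly what must be proved, and your sketch does not prove it. A clean fix is to decompose over the \emph{gaps}, i.e.\ the connected components of $\R\setminus U$, rather than over the components of $U$: the sets $\{x\in G:\ d(x,U)<r\}$ are pairwise disjoint over gaps $G$, so Fatou's lemma gives $\mu^+(U)\ge\sum_k\big(f(c_k)+f(d_k)\big)$ where $[c_k,d_k]$ runs over the nondegenerate gaps; then $f(c_k)+f(d_k)=J_\mu(F(c_k))+J_\mu(1-F(d_k))\ge J_\mu(q_k)$ with $q_k=\mu([c_k,d_k])$, and iterated subadditivity yields $\sum_k J_\mu(q_k)\ge J_\mu\big(\sum_k q_k\big)=J_\mu(1-\mu(U))=J_\mu(\mu(U))$, with no truncation and no limit to justify. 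Alternatively, one can avoid geometric approximation altogether by passing to the equivalent functional form (co-area inequality plus Lipschitz approximations of indicator functions), which is the device the paper itself uses in the proof of Lemma~\ref{lem:func} and the route taken in \cite{bobkh97scbi}.
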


The next basic lemma allows to compare the various conditions on isoperimetric profiles that appear in the rest of the article. In particular, it shows that the above result encompasses a classical theorem of Borell, asserting that for even log-concave probability measures on $\R$,
half-lines are solutions to the isoperimetric problem.
\begin{lem}\label{lem:K}
Let $T\in (0,+\infty]$ and $K:[0,T)\to \R^+$ be a non-negative function. Consider the 
following properties that $K$ may verify :
\begin{enumerate}[label=(\roman*)]
 \item $K$ is concave,
 \item $t\mapsto K(t)/t$ is non-increasing,
 \item for all $x,y\in [0,T)$ with $a+b<T$, it holds $K(a+b)\le K(a)+ K(b)$.
\end{enumerate}
Then $(i)\Longrightarrow (ii)$ and $(ii)\Longrightarrow (iii)$.
\end{lem}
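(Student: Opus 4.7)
The plan is to prove both implications directly from the definitions using only the non-negativity of $K$ and elementary one-variable manipulations; no further structure on $K$ (smoothness, etc.) is needed.

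For $(i) \Longrightarrow (ii)$, I would fix $0 < s < t < T$ and write $s$ as the convex combination $s = \tfrac{s}{t}\, t + \bigl(1 - \tfrac{s}{t}\bigr)\, 0$. Applying concavity yields
\[
K(s) \;\ge\; \frac{s}{t}\, K(t) + \left(1 - \frac{s}{t}\right) K(0) \;\ge\; \frac{s}{t}\, K(t),
\]
where in the second inequality I use $K(0) \ge 0$ and $s/t \le 1$. Dividing by $s > 0$ gives $K(s)/s \ge K(t)/t$, which is exactly (ii). The boundary value $t = 0$ does not need separate treatment since (ii) concerns the map on $(0, T)$ (and the ratio is only meaningful there).

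For $(ii) \Longrightarrow (iii)$, I would distinguish cases. If $a, b > 0$ with $a + b < T$, then since $a < a+b$ and $b < a+b$, property (ii) gives $K(a+b)/(a+b) \le K(a)/a$ and $K(a+b)/(a+b) \le K(b)/b$. Multiplying the first by $a$ and the second by $b$ and adding,
\[
K(a+b) \;=\; a \cdot \frac{K(a+b)}{a+b} + b \cdot \frac{K(a+b)}{a+b} \;\le\; K(a) + K(b),
\]
which is (iii). The degenerate cases $a = 0$ or $b = 0$ reduce to $K(b) \le K(0) + K(b)$ (or symmetrically), which follows from $K \ge 0$.

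I do not anticipate any real obstacle here: both implications are standard textbook facts about concave functions vanishing (or non-negative) at the origin and subadditivity. The only subtlety worth mentioning is that concavity alone is not enough for (ii)—the non-negativity hypothesis $K \ge 0$ (which ensures $K(0) \ge 0$) is what makes the chord argument go through, and without it the implication can fail (e.g.\ affine $K$ with negative intercept).
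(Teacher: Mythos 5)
Your proof is correct and takes essentially the same approach as the paper: the chord inequality for $(i)\Rightarrow(ii)$ is exactly the reasoning behind the paper's observation that $t\mapsto (K(t)-K(0))/t$ is non-increasing, and your symmetric split $K(a+b)=a\cdot\frac{K(a+b)}{a+b}+b\cdot\frac{K(a+b)}{a+b}$ is a minor cosmetic variant (avoiding the paper's WLOG $a\le b$) of the same use of monotonicity of $K(t)/t$. Both arguments hinge on $K(0)\ge 0$ in the same place, and your handling of the degenerate cases $a=0$ or $b=0$ is a harmless extra detail.
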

\begin{proof}
 If $K$ is concave then $t\mapsto (K(t)-K(0))/t$ is non-increasing. Since $t \mapsto K(0)/t$ is
 non-increasing as well, the first implication follows.
 Assuming $(ii)$ and without loss of generality $a\le b$,
 $$ K(a+b)\le (a+b) \frac{K(b)}{b}=a \frac{K(b)}{b}+K(b)\le K(a)+K(b).$$
\end{proof}

The next result provides sharp isoperimetric inequalities in high dimensions. It goes back to the dissertation thesis of S. Bobkov. See also \cite{bobk97ipue}.
\begin{theo}\label{th:ab}
Let $J:[0,1]\to \R^ +$ be a concave function, with $J(t)=J(1-t)$ for all $t\in[0,1]$.
Assume that for all $a,b\in [0,1]$,
 \begin{equation}\label{eq:I(ab)}
  J(ab)\le aJ(b)+bJ(a).
 \end{equation}
 Then for every space $(X,d,\mu)$ verifying Hypothesis ($\mathcal H$),
   $$ I_\mu \ge J \Longrightarrow I_{\mu^\infty}\ge J.$$
   Moreover there exists an even log-concave probability measure $\nu$ on $\R$ such that
   $I_\nu=I_{\nu^ \infty}=J$ and for every $n$, coordinate half-spaces are solutions of the isoperimetric
   problem for $\nu^ n$. 
\end{theo}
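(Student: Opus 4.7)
The proof separates into a tensorization statement and the construction of the extremal $\nu$, both orchestrated through a Bobkov--Houdr\'e-type functional isoperimetric inequality that tensorizes cleanly under hypothesis $(\mathcal H)$. I aim to establish, for every locally Lipschitz $f\colon X \to [0,1]$,
\begin{equation*}
J\!\left(\int_X f\,d\mu\right) \;\le\; \int_X J\!\circ\! f\,d\mu \;+\; \int_X |\nabla f|\,d\mu, \qquad (\star)
\end{equation*}
and to show this is equivalent to $I_\mu \ge J$ under the sub-product hypothesis on $J$.

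One direction is routine: applying $(\star)$ to the Lipschitz approximation $f_\varepsilon := \max(0, 1 - d(\cdot, A)/\varepsilon)$ of $\mathbf{1}_A$ and letting $\varepsilon \to 0$, the middle term vanishes since $J(0)=J(1)=0$, while $\int|\nabla f_\varepsilon|\,d\mu \to \mu^+(A)$, yielding $J(\mu(A))\le \mu^+(A)$. The converse $I_\mu \ge J \Rightarrow (\star)$ is the technical heart. Using the coarea formula one has $\int |\nabla f|\,d\mu = \int_0^1 \mu^+(\{f>t\})\,dt \ge \int_0^1 J(V(t))\,dt$ with $V(t) := \mu(\{f>t\})$, while integration by parts (using $J(0)=J(1)=0$) yields $\int J\!\circ\! f\,d\mu = -\int_0^1 J(t)\,dV(t)$. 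This reduces $(\star)$ to an inequality on non-increasing $V\colon[0,1]\to[0,1]$; testing it on a single step $V = c\,\mathbf{1}_{[0,\alpha)}$ produces precisely the sub-product condition $J(c\alpha)\le cJ(\alpha)+\alpha J(c)$, which is thus necessary. For general $V$ I would proceed by approximation by step functions together with a careful induction combining the sub-product inequality with the concavity of $J$.

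Once $(\star)$ is available, tensorization is clean. For $f\colon X_1\times X_2 \to [0,1]$ locally Lipschitz, set $g(y) := \int f(x,y)\,d\mu_1(x)$. Applying $(\star)$ on $(X_2,\mu_2)$ to $g$, then $(\star)$ on $(X_1,\mu_1)$ to each slice $f(\cdot,y)$, and using $|\nabla g(y)| \le \int |\nabla_y f(x,y)|\,d\mu_1(x)$, one obtains
\begin{equation*}
J\!\left(\int f\,d(\mu_1\!\otimes\!\mu_2)\right)\le \int J\!\circ\! f\,d(\mu_1\!\otimes\!\mu_2) + \int \bigl(|\nabla_x f|+|\nabla_y f|\bigr)\,d(\mu_1\!\otimes\!\mu_2);
\end{equation*}
hypothesis $(\mathcal H)$ then identifies $|\nabla_x f|+|\nabla_y f|$ with $|\nabla f|$ on the $\ell_\infty$-product. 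Iterating, $(\star)$ holds on every $(X^n, d_\infty^{(n)}, \mu^n)$; the easy direction gives $I_{\mu^n}\ge J$ for all $n$, whence $I_{\mu^\infty}\ge J$.

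For the extremal measure: since $J$ is concave and symmetric about $\tfrac12$, Lemma \ref{lem:K} yields subadditivity of $J$ on $[0,1]$, which is exactly condition (ii) of Proposition \ref{prop1d}. That proposition therefore produces an even log-concave probability $\nu$ on $\R$ with $J_\nu = J$ and $I_\nu = J$; the tensorization above then yields $I_{\nu^\infty}\ge J$. For the matching upper bound and the extremality of coordinate half-spaces: a set $H = (-\infty,a]\times\R^{n-1}$ with $\nu^n(H) = p := \nu((-\infty,a])$ has $\ell_\infty^{(n)}$-enlargement $H_r = (-\infty,a+r]\times\R^{n-1}$, so $(\nu^n)^+(H) = \nu^+((-\infty,a]) = J(p)$; this forces $I_{\nu^n} = J$ with coordinate half-spaces as solutions. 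The main obstacle throughout is the step $I_\mu \ge J \Rightarrow (\star)$: a naive sub-additive extension from single-step to multi-step data would demand a super-additivity of $J$ incompatible with concavity, so a more delicate combined use of concavity and the sub-product condition is unavoidable.
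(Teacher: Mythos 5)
Your strategy matches the one the paper attributes to Bobkov: prove a Bobkov--Houdr\'e-type functional inequality $(\star)$, tensorize it via Hypothesis $(\mathcal H)$, and construct the extremal $\nu$ from Proposition~\ref{prop1d}. Indeed, if $N$ denotes the pushforward of $\mu$ by $f$ (so that $V(t)=1-N([0,t])$ and, by symmetry of $J$, $\int_0^1 J(V(t))\,dt=\int_0^1 J(N([0,t]))\,dt$), your $(\star)$ is precisely the ``optimal condition'' the paper records as the one underlying Bobkov's approach. Your tensorization step is sound, and the treatment of the extremal measure --- subadditivity from Lemma~\ref{lem:K}, existence of the even log-concave $\nu$ with $I_\nu=J_\nu=J$ from Proposition~\ref{prop1d}, and the explicit computation showing that coordinate half-spaces attain equality in every $\nu^n$ --- is complete and correct.

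The genuine gap is the one you yourself flag: you do not carry out the proof that $I_\mu\ge J$, together with concavity of $J$ and the two-point inequality \eqref{eq:I(ab)}, implies $(\star)$ for arbitrary non-increasing $V$ (equivalently, for all Borel probability measures $N$ on $[0,1]$). Testing $(\star)$ on a single step shows \eqref{eq:I(ab)} is necessary; concluding sufficiency requires a substantial argument, and you acknowledge that the naive multi-step induction fails. The paper is explicit that ``most of the work in Bobkov's proof consists in showing that when $J$ is concave, it boils down to \eqref{eq:I(ab)}.'' Without supplying this reduction, what you have written is a correct and well-organized blueprint for Bobkov's theorem rather than a proof of it.
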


Condition~\eqref{eq:I(ab)} may be verified in a few instances as $J(t)=t(1-t)$. However, it is not so easy to deal 
with, in particular in conjunction with the symmetry assumption. For these reasons, stronger conditions of more local 
nature are useful. In \cite{bart04idii}, it is shown that \eqref{eq:I(ab)} is verified when $J$ is concave, twice differentiable
and $-1/J"$ is concave. Observe that condition \eqref{eq:I(ab)} amounts to the subadditivity 
of the function $u\mapsto e^uJ(e^{-u})$ on $\R^+$. Hence, using the second part of Lemma~\ref{lem:K}, we obtain that the  
  condition "$t\mapsto J(t)/(t\log(1/t))$ is non-decreasing" implies \eqref{eq:I(ab)} as well.
  By a tedious but straightforward calculation,  this 
  yields a neat variant of one of the main results of \cite{bart04idii}:
 \begin{cor}\label{coro:exinfdim}
  For $\beta\in [0,1]$, the function
  $K_{\beta}$ defined for $t\in [0,1]$ by
    $$ K_{\beta}(t):=  t(1-t)  \log^\beta \Big( \frac{3}{t(1-t)}\Big),$$
    satisfies that for every space $(X,d,\mu)$ verifying Hypothesis ($\mathcal H$) and  all $c\ge 0$,
$$I_\mu \ge c  K_{\beta} \Longrightarrow I_{\mu^\infty}\ge c  K_{\beta} .$$
\end{cor}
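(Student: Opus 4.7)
My plan is to derive the corollary directly from Theorem~\ref{th:ab} applied to $J:=cK_\beta$. Because the positive scalar $c$ multiplies out of concavity, symmetry about $1/2$, and the subproduct inequality~\eqref{eq:I(ab)}, it suffices to verify these three properties for $K_\beta$ itself. Symmetry $K_\beta(t)=K_\beta(1-t)$ is immediate from $t(1-t)=(1-t)t$. For concavity I restrict to $t\in(0,1/2]$ and write $K_\beta=\phi\circ s$, where $s(t)=t(1-t)\in(0,1/4]$ is concave and $\phi(v)=v\log^\beta(3/v)$. A direct differentiation gives
$$
\phi''(v)\,=\,-\frac{\beta}{v}\,\log^{\beta-2}\!\Bigl(\frac{3}{v}\Bigr)\Bigl[\log\!\Bigl(\frac{3}{v}\Bigr)-(\beta-1)\Bigr],
$$
which is nonpositive on $(0,1/4]$ since $\log(3/v)\ge\log 12>\beta-1$ for $\beta\in[0,1]$; the same bound makes $\phi'>0$. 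So $\phi$ is concave and non-decreasing, whence $\phi\circ s$ is concave on $(0,1/2]$; symmetry together with the vanishing of the derivative at $1/2$ extends concavity to all of $(0,1)$.

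For~\eqref{eq:I(ab)} I follow the remark preceding the corollary: with the substitution $u=\log(1/t)$, the inequality $K_\beta(ab)\le aK_\beta(b)+bK_\beta(a)$ is equivalent to the subadditivity on $\R^+$ of $K(u):=e^u K_\beta(e^{-u})$, and by Lemma~\ref{lem:K}, part (ii)$\Rightarrow$(iii), it is enough that $u\mapsto K(u)/u$ be non-increasing. Reverting to $t$, this reads: the function
$$
g(t)\,:=\,\frac{K_\beta(t)}{t\log(1/t)}\,=\,\frac{1-t}{\log(1/t)}\,\log^\beta\!\Bigl(\frac{3}{t(1-t)}\Bigr)
$$
is non-decreasing on $(0,1)$. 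Once this is shown, Theorem~\ref{th:ab} applied to $cK_\beta$ immediately yields the corollary.

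The monotonicity of $g$ is the heart of the proof and the only non-routine step; it is the ``tedious but straightforward calculation'' alluded to in the text. My plan to prove it is to differentiate $\log g$ and show $(\log g)'(t)\ge 0$ on $(0,1)$. The computation naturally splits at the symmetry axis $t=1/2$, and is made manageable by the slack that $t(1-t)\le 1/4$ provides: one has $\log(3/(t(1-t)))\ge\log 12$ throughout $(0,1)$, so the logarithmic factor stays bounded away from zero and controls the competing terms coming from differentiating $1/\log(1/t)$. The cases $\beta=0$ and $\beta=1$ should be checked as sanity checks (they reduce to $g(t)=(1-t)/\log(1/t)$ and $g(t)=(1-t)\log(3/(t(1-t)))/\log(1/t)$ respectively, each easily seen to be non-decreasing), and the intermediate $\beta$ is handled uniformly via the derivative computation. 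This monotonicity check is the main obstacle; the other pieces (concavity, symmetry, and the scaling by $c$) are routine bookkeeping.
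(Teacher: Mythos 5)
Your proposal takes the same route the paper intends: reduce to Theorem~\ref{th:ab}, check symmetry and concavity of $K_\beta$ directly, and verify~\eqref{eq:I(ab)} by showing $t\mapsto K_\beta(t)/(t\log(1/t))$ is non-decreasing via the second part of Lemma~\ref{lem:K} (exactly the criterion the paper states just before the corollary). Your concavity computation for $\phi(v)=v\log^\beta(3/v)$ is correct, and the reduction to the monotonicity of $g$ matches the paper's reduction; the remaining monotonicity check is precisely the ``tedious but straightforward calculation'' the paper alludes to without performing, and you likewise leave it as a plan --- one small caution there is that $g$ itself is \emph{not} symmetric about $t=1/2$ (only $K_\beta$ is), so ``splitting at the symmetry axis'' should be read as a case distinction $t\lessgtr 1/2$, not as a symmetry reduction.
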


Let us point out that \eqref{eq:I(ab)} is not the best sufficient condition for the conclusion of 
the above theorem to hold. The optimal condition given by Bobkov's approach is the following :
for every Borel probability measure $N$ on $[0,1]$,
$$J\left(\int t \, dN(t)\right) \le \int J(t)\, dN(t)+\int_0^1 J\big(N([0,t])\big) \, dt.$$
Actually when $\mu\in \mathcal F$ is a probability measure on $\R$ and  $J=J_\mu=I_\mu$,
it is not hard to check, considering subgraphs, that the above condition is necessary and
sufficient for having $I_\mu=I_{\mu^\infty}$. However this  condition is hard to verify in practice,
and most of the work in Bobov's proof consists in showing that when $J$ is concave, it boils down to \eqref{eq:I(ab)}.

Next, we develop a different approach to dimension free isoperimetric inequalities.
We use classical methods to make a link between isoperimetric  inequalities, and some Beckner-type functional inequalities, which nicely tensorize.

\begin{lem}\label{lem:func}
Let $a\in(0,1]$ and $(X,d,\mu)$ be a metric probability space. Let $c>0$, then the following assertions are 
equivalent :
\begin{enumerate}[label=(\roman*)]
\item For all $p\in[0,1]$, $cI_{\mu}(p)\ge p-p^{\frac1a}$,
\item For every locally Lipschitz function $f:X\to [0,1]$, 
 $c\int |\nabla f| \, d\mu \ge \int f\, d\mu -\left( \int f^a \, d\mu \right)^{\frac1a}.$
\end{enumerate}
\end{lem}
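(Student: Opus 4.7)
The equivalence follows the standard route linking isoperimetric bounds and $L^1$-type functional inequalities via co-area in one direction and approximation of indicators in the other. The only non-routine feature is that the $L^a$-norm on the right of $(ii)$, with $a\in(0,1]$, must be handled through the \emph{reverse} Minkowski integral inequality rather than the usual Minkowski.

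For $(ii)\Rightarrow(i)$, fix a Borel set $A\subset X$ with $p=\mu(A)$ and consider for $\varepsilon>0$ the locally Lipschitz, $[0,1]$-valued cutoff $f_\varepsilon(x):=\max(0,1-d(x,A)/\varepsilon)$. Since $|\nabla f_\varepsilon|\le \varepsilon^{-1}\mathbf{1}_{A_\varepsilon\setminus A}$, one has $\int|\nabla f_\varepsilon|\,d\mu\le(\mu(A_\varepsilon)-\mu(A))/\varepsilon$, while $\int f_\varepsilon\,d\mu,\int f_\varepsilon^a\,d\mu\to p$ by dominated convergence (the case $\mu^+(A)=\infty$ being trivial). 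Feeding $f_\varepsilon$ into $(ii)$ and taking the liminf along a sequence realising $\mu^+(A)$ gives $c\mu^+(A)\ge p-p^{1/a}$. Running the same argument with $g_\varepsilon:=1-f_\varepsilon$, which has the same modulus of gradient but integrals converging to $1-p$, yields $c\mu^+(A)\ge(1-p)-(1-p)^{1/a}$. Repeating with $X\setminus A$ in place of $A$ produces identical bounds for $\mu^-(A)$, so $c\min(\mu^+(A),\mu^-(A))\ge p-p^{1/a}$, and taking the infimum over $A$ yields $(i)$.

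For $(i)\Rightarrow(ii)$, let $f:X\to[0,1]$ be locally Lipschitz and set $F(t):=\mu(\{f>t\})$. The co-area inequality for Minkowski content combined with $(i)$ gives
\[
c\int|\nabla f|\,d\mu\;\ge\;\int_0^1 cI_\mu\bigl(F(t)\bigr)\,dt\;\ge\;\int_0^1\bigl(F(t)-F(t)^{1/a}\bigr)\,dt\;=\;\int f\,d\mu-\int_0^1 F(t)^{1/a}\,dt.
\]
To close the gap, one needs $\int_0^1 F(t)^{1/a}\,dt\le(\int f^a\,d\mu)^{1/a}$. Writing $F(t)^{1/a}=\|\mathbf{1}_{\{f>t\}}\|_{L^a(\mu)}$ and using the layer-cake identity $f=\int_0^1\mathbf{1}_{\{f>t\}}\,dt$, this reads
\[
\int_0^1 \bigl\|\mathbf{1}_{\{f>t\}}\bigr\|_{L^a(\mu)}\,dt\;\le\;\Bigl\|\int_0^1 \mathbf{1}_{\{f>t\}}\,dt\Bigr\|_{L^a(\mu)},
\]
which is exactly the \emph{reverse} Minkowski integral inequality for non-negative integrands, valid precisely because $a\in(0,1]$. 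The main obstacle is thus recognising that reverse Minkowski is the tool that matches the $p^{1/a}$-shape in $(i)$ with the $L^a$-norm in $(ii)$ (for $a=1$ both inequalities degenerate to $0\ge 0$); the symmetrisation trick $g_\varepsilon=1-f_\varepsilon$ in the easy direction is a minor device needed only to encode by hand the max-type sharpening that the symmetric function $I_\mu$ carries for free.
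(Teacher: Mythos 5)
Your proof is correct and follows the same two-step route as the paper: the co-area inequality together with a Minkowski-type interchange for $(i)\Rightarrow(ii)$, and Lipschitz approximation of indicators (with the $1-f$ symmetrisation) for $(ii)\Rightarrow(i)$. The only cosmetic deviation is the form of the interchange inequality: you invoke the reverse Minkowski inequality in $L^a(\mu)$ with $a\le 1$, whereas the paper applies the ordinary Minkowski integral inequality with exponent $1/a\ge 1$; for the indicator kernel $\mathbf{1}_{\{f\ge t\}}$ both reduce, after raising to the $a$-th power, to the same bound $\int_0^1 \mu(\{f\ge t\})^{1/a}\,dt \le \bigl(\int f^a\,d\mu\bigr)^{1/a}$, so your framing of reverse Minkowski as \emph{the} required tool is a mild overstatement.
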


\begin{proof}
Assuming $(i)$, we apply the co-area inequality to an arbitrary locally Lipschitz function $f$ (see e.g. \cite{bobkh97scbi}); next
we take advantage of the isoperimetric inequality for $\mu$:
\begin{eqnarray*}
 c\int |\nabla f| \, d\mu & \ge&  c\int_0^1 \mu^+(\{f\ge t\}) \, dt \\
   & \ge& \int_0^1  \Big(  \mu(\{f\ge t\})- \mu(\{f\ge t\})^{\frac1a} \Big) dt \\
   &= & \int f \, d\mu -  \int_0^1  \mu(\{f\ge t\})^{\frac1a} dt. 
\end{eqnarray*}
In order to conclude that the second assertion is valid, we apply the Minkowski inequality with exponent $1/a\ge 1$:
\begin{eqnarray*}
  \left(  \int_0^1  \mu(\{f\ge t\})^{\frac1a} dt \right)^a &=&  
      \left(  \int_0^1 \left( \int  {\bf 1}_{f(s)\ge t} \, d\mu(s)\right)^{\frac1a} dt \right)^a\\
      &\le & \int   \left(   \int_0^1   ({\bf 1}_{f(s)\ge t}) ^{\frac1a}  dt \right)^a d\mu(s) = \int f^a d\mu.
\end{eqnarray*}

\medskip
The fact that the second assertion implies the first one is rather standard : one applies the functional inequalities
to Lipschitz approximations of the characteristic function of an arbitrary Borel set $A \subset X$ (see Lemma 3.7 in \cite{bobkh97scbi}).
 This yields $c\mu^ +(A) \ge \mu(A)-\mu(A)^{\frac1a}$.
Applying the inequality to $1-f$ instead of $f$ and using $|\nabla f|=|\nabla (1-f)|$ and then taking approximations of $\mathbf{1}_A$
gives  $c\mu^ +(A) \ge 1- \mu(A)-(1-\mu(A))^{\frac1a}$ for all $A$, which is equivalent to $c\mu^ -(A) \ge \mu(A)-\mu(A)^{\frac1a}$
for all Borel sets $A$.
\end{proof}

\noindent The following extension of the classical subadditivity property of the variance is due to Lata\l a and Oleszkiewicz \cite{latao00bsp}. It allowed them to devise functional inequalities with the tensorization property. Actually, they focused
on Sobolev inequalities involving $L_2$-norms of gradients, with applications to concentration inequalities. Here we aim at
functional inequalities involving $L_1$-norms of gradients and provide information about isoperimetric inequalities.

\smallskip

\begin{lem}\label{lem:LO}
Let $\left(\Omega_1,\mu_1\right)$ and $\left(\Omega_2,\mu_2\right)$ be probability spaces and let $\left(\Omega,\mu\right)$ = $\left(\Omega_1 \times \Omega_2,\mu_1 \otimes \mu_2\right)$ be their product probability space. For any non-negative random variable $Z$ defined on $\left(\Omega,\mu\right)$ and having finite first moment and for any strictly convex function $\phi$  on $[0,+\infty)$ such that $\frac{1}{\phi''}$ is a concave function, the following inequality holds true :
\begin{equation*}
\mathbb{E}_{\mu}\phi(Z) - \phi\left(\mathbb{E}_{\mu}Z\right) \leq \mathbb{E}_{\mu}\Big(\big[\mathbb{E}_{\mu_1}\phi(Z) - \phi\left(\mathbb{E}_{\mu_1}Z\right)\big] + \big[\mathbb{E}_{\mu_2}\phi(Z) - \phi\left(\mathbb{E}_{\mu_2}Z\right)\big]\Big).
\end{equation*}
\end{lem}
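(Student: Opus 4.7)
The plan is to reduce the claim, via a chain rule for $\phi$-entropy, to Jensen's inequality for the functional $Y\mapsto \mathbb{E}_{\mu_1}\phi(Y)-\phi(\mathbb{E}_{\mu_1}Y)$, and then to establish convexity of this functional by a Hessian computation whose non-negativity is equivalent to the hypothesis that $1/\phi''$ is concave.

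For brevity write $H_\nu(Y):=\mathbb{E}_\nu\phi(Y)-\phi(\mathbb{E}_\nu Y)$. Adding and subtracting $\mathbb{E}_{\mu_1}\phi(\mathbb{E}_{\mu_2}Z)$ inside $\mathbb{E}_\mu\phi(Z)=\mathbb{E}_{\mu_1}\mathbb{E}_{\mu_2}\phi(Z)$ gives the chain rule
\begin{equation*}
H_\mu(Z)=H_{\mu_1}\bigl(\mathbb{E}_{\mu_2}Z\bigr)+\mathbb{E}_{\mu_1}\bigl[H_{\mu_2}(Z(x_1,\cdot))\bigr].
\end{equation*}
Since $\mathbb{E}_\mu[H_{\mu_2}(Z)]$ equals the last term while $\mathbb{E}_\mu[H_{\mu_1}(Z)]=\mathbb{E}_{\mu_2}[H_{\mu_1}(Z(\cdot,x_2))]$, the claimed inequality simplifies after cancellation to
\begin{equation*}
H_{\mu_1}\bigl(\mathbb{E}_{\mu_2}Z\bigr)\;\le\;\mathbb{E}_{\mu_2}\bigl[H_{\mu_1}(Z(\cdot,x_2))\bigr],
\end{equation*}
which is Jensen's inequality for $Y\mapsto H_{\mu_1}(Y)$ applied to the family $\{Z(\cdot,x_2)\}_{x_2}$ averaged against $\mu_2$. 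Thus it suffices to show that $Y\mapsto H_{\mu_1}(Y)$ is a convex functional on $L^1_+(\mu_1)$.

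A direct rearrangement shows that the two-point inequality $H_{\mu_1}(\lambda Y_1+(1-\lambda)Y_2)\le\lambda H_{\mu_1}(Y_1)+(1-\lambda)H_{\mu_1}(Y_2)$ is equivalent to $\mathbb{E}_{\mu_1}[g_\lambda(Y_1,Y_2)]\ge g_\lambda(\mathbb{E}_{\mu_1}Y_1,\mathbb{E}_{\mu_1}Y_2)$, where
\begin{equation*}
g_\lambda(u,v):=\lambda\phi(u)+(1-\lambda)\phi(v)-\phi\bigl(\lambda u+(1-\lambda)v\bigr),
\end{equation*}
and this is Jensen for $g_\lambda$. Hence convexity of the functional reduces to pointwise convexity of $g_\lambda$ on $[0,\infty)^2$, and the integral form of Jensen handles the continuous $\mu_2$-average appearing in the previous display. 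Writing $A=\phi''(u)$, $B=\phi''(v)$ and $C=\phi''(\lambda u+(1-\lambda)v)$, the Hessian of $g_\lambda$ reads
\begin{equation*}
\begin{pmatrix}\lambda A-\lambda^2 C & -\lambda(1-\lambda)C\\ -\lambda(1-\lambda)C & (1-\lambda)B-(1-\lambda)^2 C\end{pmatrix},
\end{equation*}
whose determinant factors as $\lambda(1-\lambda)\bigl[AB-\lambda BC-(1-\lambda)AC\bigr]$. After division by $ABC>0$, non-negativity of this determinant is equivalent to $\tfrac{1}{C}\ge\tfrac{\lambda}{A}+\tfrac{1-\lambda}{B}$, which is precisely the concavity of $1/\phi''$ at $\lambda u+(1-\lambda)v$. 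The same inequality forces $\lambda C\le A$, so $\lambda A-\lambda^2 C\ge 0$, and symmetrically for the other diagonal entry; the Hessian is positive semi-definite and $g_\lambda$ convex.

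The step I expect to be the main obstacle is the algebraic one: recognising that the $2\times 2$ determinant above matches the harmonic-mean inequality encoded by the concavity of $1/\phi''$. Once this identification is made, positive semi-definiteness of the Hessian is immediate, and the chain rule propagates the resulting functional convexity to the desired subadditivity of the $\phi$-entropy on the product space.
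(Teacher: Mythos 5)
The paper does not prove this lemma; it is cited from Lata\l a and Oleszkiewicz, so there is no in-text argument to compare against. Judged on its own, your strategy is essentially theirs, and the algebraic heart is right: the chain rule, the cancellation down to $H_{\mu_1}\bigl(\mathbb{E}_{\mu_2}Z\bigr)\le\mathbb{E}_{\mu_2}\bigl[H_{\mu_1}(Z(\cdot,x_2))\bigr]$, the Hessian of $g_\lambda$, and the factorization of its determinant into $\lambda(1-\lambda)\bigl[AB-\lambda BC-(1-\lambda)AC\bigr]$, identified after division by $ABC>0$ with the concavity inequality for $1/\phi''$, are all correct, and you are right that the same inequality makes the diagonal entries non-negative.

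The one place that needs more justification is the phrase ``the integral form of Jensen handles the continuous $\mu_2$-average.'' What you actually established via $g_\lambda$ is the two-point convexity of the functional $Y\mapsto H_{\mu_1}(Y)$ on $L^1_+(\mu_1)$; passing from that to Jensen for a $\mu_2$-valued Bochner average is not automatic and requires either lower semi-continuity plus a supporting affine functional (Hahn--Banach) or a monotone approximation by simple functions, neither of which is carried out. The cleanest way to avoid the infinite-dimensional Jensen entirely is to replace $g_\lambda$ by the Bregman divergence $\Psi(a,b)=\phi(a)-\phi(b)-\phi'(b)(a-b)$, whose Hessian is
\begin{equation*}
\begin{pmatrix}\phi''(a) & -\phi''(b)\\[2pt] -\phi''(b) & \phi''(b)-\phi'''(b)(a-b)\end{pmatrix},
\end{equation*}
and whose determinant, after dividing by $\phi''(a)\phi''(b)^2>0$, is exactly the tangent-line inequality expressing concavity of $1/\phi''$ at $b$; so $\Psi$ is jointly convex on $[0,\infty)^2$ if and only if $1/\phi''$ is concave. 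Since $\mathbb{E}_\nu\phi(Y)-\phi(\mathbb{E}_\nu Y)=\mathbb{E}_\nu\Psi\bigl(Y,\mathbb{E}_\nu Y\bigr)$ for any $\nu$, your post-cancellation target rewrites as $\mathbb{E}_{\mu_1}\Psi\bigl(\mathbb{E}_{\mu_2}Z,\mathbb{E}_\mu Z\bigr)\le\mathbb{E}_\mu\Psi\bigl(Z,\mathbb{E}_{\mu_1}Z\bigr)$, which follows from ordinary two-variable Jensen for $\Psi$ applied to the $\mu_2$-average of the pair $\bigl(Z(x_1,\cdot),\mathbb{E}_{\mu_1}Z\bigr)$ for each fixed $x_1$, and then integrating in $x_1$. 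This stays entirely in finite dimensions and is the argument in the reference.
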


\begin{theo}\label{th:tens-a-fixe}
Let $(X,d,\mu)$ be a metric probability space verifying hypothesis ($\mathcal H$).
Let $a\in[\frac12,1]$ and $c>0$. If for all $p\in (0,1)$, $I_\mu\ge c (p-p^{\frac1a})$, 
then  for all $p\in (0,1)$, $$I_{\mu^\infty}(p) \ge c (p-p^{\frac1a}).$$ 
\end{theo}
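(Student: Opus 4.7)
The plan is to pass from the isoperimetric inequality to the functional form given by Lemma~\ref{lem:func}, tensorize it by a Lata\l a--Oleszkiewicz-type argument, and pass back. More precisely, Lemma~\ref{lem:func} asserts that the hypothesis is equivalent to
\begin{equation*}
c\int|\nabla f|\,d\mu\ge \int f\,d\mu-\Big(\int f^a\,d\mu\Big)^{1/a}\qquad(\star)
\end{equation*}
for every locally Lipschitz $f:X\to[0,1]$. Setting $g=f^a$ and $\phi(t)=t^{1/a}$, the right-hand side of $(\star)$ reads $\mathbb{E}_\mu\phi(g)-\phi(\mathbb{E}_\mu g)$, which is exactly the type of deficit controlled by Lemma~\ref{lem:LO}. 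So it suffices to show that $(\star)$ tensorizes.

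The key algebraic point is to verify that $\phi(t)=t^{1/a}$ satisfies the assumptions of Lemma~\ref{lem:LO}: strict convexity on $[0,+\infty)$ and concavity of $1/\phi''$. A direct computation yields $1/\phi''(t)=\frac{a^2}{1-a}\,t^{2-1/a}$, which is concave precisely when the exponent $2-1/a$ lies in $[0,1]$, i.e.\ when $a\in[1/2,1)$. This is the unique place where the range of $a$ enters the argument in an essential way (the case $a=1$ is degenerate but trivial, as both sides of $(\star)$ then vanish).

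Granted this, the two-factor tensorization is routine. For a locally Lipschitz $f:X^2\to[0,1]$, apply $(\star)$ to each slice $x\mapsto f(x,y)$ on $(X,\mu_1)$ and integrate in $y$, and symmetrically with the roles of the two factors swapped. Adding the two inequalities produces
\begin{equation*}
c\int\big(|\nabla_x f|+|\nabla_y f|\big)\,d(\mu_1\otimes\mu_2)\ge \mathbb{E}\big[\mathbb{E}_{\mu_1}\phi(g)-\phi(\mathbb{E}_{\mu_1}g)\big]+\mathbb{E}\big[\mathbb{E}_{\mu_2}\phi(g)-\phi(\mathbb{E}_{\mu_2}g)\big].
\end{equation*}
Lemma~\ref{lem:LO} applied to $\phi$ and $Z=g$ bounds the right-hand side from below by $\mathbb{E}\phi(g)-\phi(\mathbb{E}g)$, which is exactly the right-hand side of $(\star)$ on the product. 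Hypothesis $(\mathcal H)$ then identifies $|\nabla_x f|+|\nabla_y f|$ with $|\nabla f|$ almost everywhere on $(X^2,d_\infty^{(2)})$, so $(\star)$ is inherited by $(X^2,\mu^{\otimes 2})$.

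An immediate induction on $n$ propagates $(\star)$ to $(X^n,d_\infty^{(n)},\mu^n)$ for every $n$, and the converse direction of Lemma~\ref{lem:func} converts this back into $I_{\mu^n}(p)\ge c(p-p^{1/a})$ uniformly in $n$. Taking the infimum gives the claimed bound $I_{\mu^\infty}(p)\ge c(p-p^{1/a})$. The main obstacle is entirely contained in the algebraic observation that the exponents of $\phi(t)=t^{1/a}$ fit the Lata\l a--Oleszkiewicz framework exactly on the interval $a\in[1/2,1]$; once that is noted, the rest is a standard tensorization scheme.
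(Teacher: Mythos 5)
Your proposal is correct and follows essentially the same route as the paper's own proof: pass to the Beckner-type functional inequality via Lemma~\ref{lem:func}, tensorize using Lemma~\ref{lem:LO} with $\phi(t)=t^{1/a}$ (whose admissibility for $a\in[1/2,1)$ you verify exactly as needed), combine partial gradients via Hypothesis $(\mathcal H)$, induct, and return to the isoperimetric form. The only difference is presentational -- you apply the slice inequalities first and then Lemma~\ref{lem:LO}, while the paper chains the same inequalities in the opposite order -- which does not change the argument.
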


\begin{proof}
By Lemma \ref{lem:func}, we know that for every locally Lipschitz function $f:X\to [0,1]$
\begin{equation}\label{eq:LO}
\frac1c \int |\nabla f| \, d\mu \ge \int f\, d\mu -\left( \int f^a \, d\mu \right)^{\frac1a}.
\end{equation}

 We shall prove that this functional inequality tensorizes, meaning that for all $n$ the same property is verified by $\mu^n$. Applying Lemma \ref{lem:func} again 
will give the claimed dimension-free isoperimetric inequality.

Checking  the tensorization property is done along the same lines as in \cite{latao00bsp}. Assume that $(X_1,\nu_1,d_1)$ and $(X_2,\nu_2,d_2)$ satisfy \eqref{eq:LO}.
Since $a\in[\frac12,1]$, Lemma \ref{lem:LO} applies to $\Phi(t)=t^{\frac1a}$ and gives
\begin{eqnarray*}
&&\int f\, d\nu_1d\nu_2 -\left(\int  f^a d\nu_1d\nu_2\right)^{\frac1a} 
=
\int \Phi(f^a)\, d\nu_1d\nu_2 -\Phi\left(\int  f^a d\nu_1d\nu_2\right) \\
&\le & \int \left( \int \Phi(f^a)\, d\nu_1 -\Phi\left(\int  f^a d\nu_1  \right)\right) \,d\nu_2+
\int \left( \int \Phi(f^a)\, d\nu_2 -\Phi\left(\int  f^a d\nu_2  \right)\right) \,d\nu_1\\
&\le & \frac{1}{c} \int \big(|\nabla_1 f|+|\nabla_2 f| \big) \, d\nu_1d\nu_2,
\end{eqnarray*} 
 where $|\nabla_i f|$ is the norm of the gradient of $f$ taken with respect to the $i$-th
 variable. When $\nu_1=\mu^m$ and  $\nu_2=\mu^n$, we may apply Hypothesis ($\mathcal{H}$) to replace the function in the latter integral by the norm of the full gradient
 $|\nabla f|$. This allows to show by induction that for all $n$, $\mu^n$ verifies 
 the claimed functional inequality.
\end{proof}

This result readily generalizes :

\begin{theo}\label{th:becknergen}
Let $c:[\frac12,1]\to \R^+$, and consider for $p\in [0,1]$,
$$L(p):=\sup_{a\in[\frac12,1]} c(a) \max\big\{p-p^{\frac1a},1-p-(1-p)^{\frac1a}\big\}.$$
If $(X,d,\mu)$ satisfies ($\mathcal H$) and $I_\mu\ge L$ then 
  $$I_{\mu^\infty}\ge L.$$
  Moreover there exists an even probability measure $\nu$ on $\R$ such that
  $I_\nu=I_{\nu^\infty}=L$ and such that for all $n$, coordinate half-spaces are
  solutions to the isoperimetric problem for $\nu^n$.
\end{theo}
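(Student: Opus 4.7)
The plan is to tensorize Theorem \ref{th:tens-a-fixe} one exponent at a time and then to construct the extremal measure via Proposition \ref{prop1d}. For the inequality $I_{\mu^\infty}\ge L$, fix $a\in[\frac12,1]$. Since $I_\mu\ge L\ge c(a)(p-p^{1/a})$ for all $p\in(0,1)$, Theorem \ref{th:tens-a-fixe} applied with the constant $c(a)$ yields $I_{\mu^\infty}(p)\ge c(a)(p-p^{1/a})$. The symmetry $I_{\mu^\infty}(p)=I_{\mu^\infty}(1-p)$ then gives $I_{\mu^\infty}(p)\ge c(a)(1-p-(1-p)^{1/a})$ as well, and taking the maximum of these two bounds followed by the supremum over $a\in[\frac12,1]$ produces $I_{\mu^\infty}\ge L$.

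For the extremal measure, I would build $\nu$ from Proposition \ref{prop1d} by checking that $L$ is symmetric around $\frac12$ (immediate from the definition) and subadditive on $(0,1)$. For each fixed $a$, both $p\mapsto c(a)(p-p^{1/a})$ and $p\mapsto c(a)(1-p-(1-p)^{1/a})$ are concave on $[0,1]$ and vanish at $p=0$, hence are subadditive there. Because the pointwise supremum of a family of subadditive functions is itself subadditive, namely
$$\sup_\alpha u_\alpha(p+q)\le\sup_\alpha\bigl(u_\alpha(p)+u_\alpha(q)\bigr)\le\sup_\alpha u_\alpha(p)+\sup_\alpha u_\alpha(q),$$
$L$ is subadditive. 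Positivity on $(0,1)$ follows from $L(p)\ge c(\tfrac12)\,p(1-p)$; once the continuity of $L$ is arranged (if need be by regularizing $c$), Proposition \ref{prop1d} produces a measure $\nu\in\mathcal M$, symmetric around its median, satisfying $I_\nu=J_\nu=L$.

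Finally, the measure $\nu$ has a positive continuous density on $\R$ and therefore satisfies $(\mathcal H)$, so the first part of the theorem applied to $\nu$ yields $I_{\nu^\infty}\ge L$. Conversely, for each $n$ and each $p\in(0,1)$, the coordinate half-space $(-\infty,F_\nu^{-1}(p)]\times\R^{n-1}$ has $\nu^n$-measure $p$ and, since its $r$-enlargement in the $d_\infty^{(n)}$ metric is simply $(-\infty,F_\nu^{-1}(p)+r)\times\R^{n-1}$, its outer Minkowski content equals $J_\nu(p)=L(p)$. Thus $I_{\nu^n}(p)\le L(p)$ for every $n$, and combining the inequalities forces $I_\nu=I_{\nu^n}=I_{\nu^\infty}=L$ for all $n$, with the coordinate half-spaces extremal. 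The main potential obstacle is establishing subadditivity of $L$, which is handled above by the pointwise-supremum observation; ensuring the regularity of $L$ required to invoke Proposition \ref{prop1d} is a secondary technical concern.
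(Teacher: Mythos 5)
Your proof matches the paper's own argument: apply Theorem~\ref{th:tens-a-fixe} for each fixed $a$ and use the symmetry of $I_{\mu^\infty}$ to upgrade $p-p^{1/a}$ to $M_a$ and then to $L$; verify that $L$ is subadditive as a supremum of concave functions vanishing at $0$; invoke Proposition~\ref{prop1d} to build the extremal $\nu$ with $I_\nu=J_\nu=L$; and close with the coordinate-halfspace computation giving $I_{\nu^n}(p)\le L(p)\le I_{\nu^\infty}(p)$. One small point to tighten: ``regularizing $c$'' would change $L$ and is not the right fix for the continuity needed in Proposition~\ref{prop1d}; instead note that for all $a\in[\tfrac12,1]$ one has $M_a(u)\le u$ on $[0,1]$, so $L(u)\le u\sup c$, and then subadditivity plus symmetry give $|L(t)-L(s)|\le L(|t-s|)\le |t-s|\sup c$, i.e.\ $L$ is Lipschitz (hence continuous) whenever $c$ is bounded --- which is the implicit regime of the statement, since otherwise $L$ need not be finite.
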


\begin{proof}
Observe that since, by definition, isoperimetric functions of probability measures
are symmetric with respect to $\frac12$,  the property for all $p\in [0,1]$,  $I_\mu\ge c (p-p^{\frac1a})$ is equivalent to $ I_\mu(p)\ge c M_a(p)$, for all $p$,
where 
$$M_a(p):=\max\big\{p-p^{\frac1a}, 1-p-(1-p)^{\frac1a}\big\}.$$
Hence the fact that $I_\mu\ge L$ implies $I_{\mu^\infty}\ge L$ is a direct consequence
of the previous theorem, applied for all values of $a$.

Next, it is not hard to check that for $a\in[\frac12,1]$, $M_a$ is subadditive, being a supremum of two concave functions defined on $\left[0,1\right]$. And, since the property "$J(x+y)\le J(x)+J(y)$ for all $x,y$" is stable under supremum, it follows that $L$ is also subadditive.

Hence, by Proposition \ref{prop1d}, there exists an even probability measure $\nu$ on $\R$ such that
$I_{\nu}=L$ and half-lines solve the isoperimetric problem for $\nu$.
As we just proved, $I_\nu\ge L$ ensures that $I_{\nu^\infty}\ge L$. Combining this with $L= I_\nu\ge I_{\mu^{\otimes \infty}}$ yields  $I_{\nu^\infty}= L$.
 The coordinate halfspace $\{x\in \R^n\,\Big| \,\; x_1\le t\}$ has same measure and boundary measure (for $\nu^n$), as the set $(-\infty, t]$ (for $\nu$).  It is then clear that it solves the isoperimetric problem.
  
\end{proof}
 Remark that for $a\in(\frac12,1)$, the function $M_a(p)=\max\big\{p-p^{\frac1a}, 1-p-(1-p)^{\frac1a}\big\}$  is not concave, hence the measure $\nu_a$ is not log-concave.
 Actually, $M_a$ does not even have its maximum at $\frac12$. Hence it cannot be 
 obtained as a supremum of concave functions which are in addition symmetric around $1/2$. 
Therefore it gives a genuinely new example of a measure for which coordinate
half-spaces solve the isoperimetric problem in any dimension (that could not be deduced
from Theorem \ref{th:ab}).

\section{Approximate inequalities}

 Let us start with some notations. Given two non-negative functions $f,g$ defined on a set $S\subset \R$ and $D\ge 1$, we write $f\approx_D g$ and say that $f$ and $g$ are equivalent up to a factor $D$ if there exists $a>0$ such that for all $x\in S$,
 $a\, g(x)\le f(x)\le D a\, g(x)$. We write $f\approx g$ when there exists $D$ such that 
 $f\approx_D g$.
 
\noindent We say that a non-negative function $f$ defined on a set  $S\subset \R$ is 
 essentially non-decreasing (with constant $D\ge 1$) when there exists a non-decreasing
 function $g$ on $S$ such that $f\approx_D g$. In the same way, we may
 define the notion of essentially non-increasing functions.
 
\noindent Also, a non-negative function $f$
defined on an interval is said to be essentially concave (or pseudoconcave) if it is equivalent
to a concave function. 

The next proposition provides workable formulations of the above definitions. The part about
essentially concave functions is due to Peetre \cite{peet68if2}.

\begin{lem}\label{lem:peetre}
 Let $f$ be a non-negative function defined on  $S\subset \R$.
 Then $f$  is essentially non-decreasing (resp. essentially non-increasing) with constant $D \geq 1$ if and only if for every $s \leq t $ in $S$,
\begin{equation*}
f(s)\leq Df(t) \quad \big(\text{resp.~} f(t) \geq Df(s)\big).
\end{equation*} 

When $f$ is defined on $(0,+\infty)$, the following assertions are equivalent :
\begin{enumerate}[label=(\roman*)]
\item $f$ is essentially concave with some constant $C_1$,
\item There exists $C_2\ge 1$ such that for all $ s,t \in \R^*_+$ , $\displaystyle f(s) \leq C_2 \max \left(1,\frac{s}{t}\right)f(t).$
\item There exists $C_3\ge 1$ such that on $\R_+^*$,  
 $f$ is essentially non-decreasing  and  $t \mapsto \frac{f(t)}{t}$ is essentially non-increasing, both with constant $C_3$.
\end{enumerate}
Moreover, the smallest possible constants verify  $C_1/2\le C_2=C_3 \leq C_1$.
\end{lem}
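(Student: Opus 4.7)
The plan is to treat the two parts in turn, handling the second through a cycle $(i)\Rightarrow(iii)\Rightarrow(ii)\Rightarrow(i)$ with careful book-keeping of the constants.

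For the first part, the forward direction is immediate: if $g$ is non-decreasing and $ag\le f\le Dag$ on $S$, then for $s\le t$ in $S$ one has $f(s)\le Dag(s)\le Dag(t)\le Df(t)$. For the converse, I would define the non-decreasing envelope $g(t):=\sup\{f(u):u\in S,\ u\le t\}$; the hypothesis $f(u)\le Df(t)$ whenever $u\le t$ yields $f(t)\le g(t)\le Df(t)$, hence $f\approx_D g$. The essentially non-increasing case is treated symmetrically, replacing the sup by an inf over $u\ge t$ (or by applying the previous step to $t\mapsto f(-t)$).

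For the second part, I would first observe two elementary consequences of $h\ge 0$ concave on $(0,+\infty)$: namely that $h$ is automatically non-decreasing (otherwise, if $h(a)>h(b)$ for some $a<b$, concavity would force $h(x)\to-\infty$ as $x\to+\infty$, contradicting $h\ge 0$) and that $t\mapsto h(t)/t$ is non-increasing (a standard consequence of concavity together with $h(0^+)\ge 0$, which is guaranteed since $h\ge 0$). If $f\approx_{C_1} h$ these two properties transfer to $f$ with constant $C_1$, giving $(i)\Rightarrow(iii)$ with $C_3\le C_1$. The equivalence $(ii)\Leftrightarrow(iii)$ with the same constant is obtained by splitting the inequality in $(ii)$ into the two cases $s\le t$ (which, via $\max(1,s/t)=1$, yields the essential monotonicity of $f$) and $s>t$ (which, via $\max(1,s/t)=s/t$, yields the essential monotonicity of $f/t$); conversely, combining the two monotonicity statements recovers $(ii)$.

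The substance lies in $(ii)\Rightarrow(i)$, a Peetre-type construction. I would define
\[ \tilde f(s):=\inf\bigl\{L(s):L\text{ affine on }\R,\ L\ge f\text{ on }(0,+\infty)\bigr\}, \]
which is concave as an infimum of affine functions and trivially satisfies $\tilde f\ge f$. To bound $\tilde f$ from above at an arbitrary $s_0>0$, I take the explicit affine test function $L_{s_0}(s):=f(s_0)(1+s/s_0)$ and verify, using $(ii)$, that $C_2 L_{s_0}\ge f$ on $(0,+\infty)$: indeed, for $t>0$,
\[ C_2 L_{s_0}(t)=C_2 f(s_0)\Big(1+\tfrac{t}{s_0}\Big)\ge C_2 f(s_0)\max\Big(1,\tfrac{t}{s_0}\Big)\ge f(t). \]
Consequently $\tilde f(s_0)\le C_2 L_{s_0}(s_0)=2C_2 f(s_0)$, so $f\le \tilde f\le 2C_2 f$ and $(i)$ holds with $C_1\le 2C_2$. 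Chaining the four implications gives $C_1/2\le C_2=C_3\le C_1$, as announced. The main obstacle is precisely this last step: the $\max(1,s/t)$ shape of the hypothesis essentially dictates the choice of the affine majorant, and the factor $2$ in the final constant arises exactly because $L_{s_0}(s_0)=2f(s_0)$.
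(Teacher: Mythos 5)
Your proof is correct and follows essentially the same path as the paper's: the non-decreasing envelope for the first part, and Peetre's least concave majorant for $(ii)\Rightarrow(i)$, with identical constant bookkeeping leading to $C_1/2\le C_2=C_3\le C_1$. The only difference is a dual description of that majorant --- you take the infimum of affine majorants and exhibit the explicit test function $L_{s_0}(s)=f(s_0)\left(1+\tfrac{s}{s_0}\right)$, whereas the paper takes the supremum over convex combinations $\sum_i\lambda_i f(t_i)$ and bounds the sum directly --- and both computations hinge on the same elementary inequality $\max(1,x)\le 1+x$, producing the same factor $2$.
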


\begin{proof}
The argument for essentially non-decreasing functions is very simple and we skip it. Let us just point out that it involves the least non-decreasing function above $f$, which is given 
by $\acute f(t):=\sup\{f(x)\,\mid\,  x\in S\cap(-\infty,t]\}$.

Next let us focus on concavity issues. The equivalence of the last two statements is obvious.

 Assume $f$ is essentially concave on $\R^*_+$. Then there exists a concave function $h$ on $\R^*_+$ which is equivalent to $f$. And as $f$ is positive, $h$ is positive, therefore, being concave, $h$ is necessarily non-decreasing on $\R^*_+$. Moreover $t \mapsto \frac{h(t)}{t}$ is non-increasing on $\R^*_+$. So $f$ satisfies the third condition.

 Eventually, let us assume the second condition and show that $f$ is equivalent to a concave 
 function. The natural guess is the   least concave majorant of $f$,  which is explicitly given
 for $t>0$ by 
\begin{equation*}
\widehat{f}(t) := \sup\left\{\sum^{n}_{i=1}\lambda_if\left(t_i\right) \,\Big| \,  n \in \N^*,~ \lambda_i \geq 0, \, t_i>0,~ \sum^{n}_{i=1}\lambda_i=1 \text{~and~} \sum^{n}_{i=1}\lambda_it_i = t \right\}.
\end{equation*}
By definition $f \leq \widehat{f}$. Let $n \in \N^*$, $t \in \R^*_+$, $\left(\lambda_i\right)_{1 \leq i \leq n}$ and $\left(t_i\right)_{1 \leq i \leq n}$ such that, for all $i$, $\lambda_i \geq 0$, $\sum^{n}_{i=1}\lambda_i = 1$ and $\sum^{n}_{i=1}\lambda_it_i = t$. Using the hypothesis, we obtain
\begin{equation*}
\sum^{n}_{i=1}\lambda_if\left(t_i\right) \leq C_2\sum^{n}_{i=1}\lambda_i\max\left(1,\frac{t_i}{t}\right)f(t) \leq C_2\left(\sum^{n}_{i=1}\lambda_i + \sum^{n}_{i=1}\frac{\lambda_it_i}{t}\right)f(t) = 2C_2f(t)
\end{equation*}
Therefore $f\le \widehat{f}\le 2C_2 f$ and we have shown that $f$ is essentially concave.
\end{proof}

\medskip
We are now ready to state our main results :

\begin{theo}\label{th:dir}
Let $J$ be a non-negative function defined on $[0,1]$ with $J(0)=0$. Assume that it is symmetric around $\frac{1}{2}$ (i.e. for every $t \in \left[0,1\right], J(t) = J(1-t)$) and that the function $$ t\in(0,1)\mapsto \frac{J(t)}{t\log(1/t)}$$
is essentially non-decreasing with constant $D$.
 Then   for every metric probability space $(X,d,\mu)$ satisfying Hypothesis ($\mathcal H$):
 $$ I_\mu \ge J \Longrightarrow I_{\mu^\infty} \ge \frac{1}{c_D} J,$$ 
 with $ c_D=2(D/\log 2)^2\le 5D^2$.
 Moreover, there exists a symmetric log-concave measure $\nu$ on the real line such that, on $\left[0,1\right]$, $J \approx I_{\nu} \approx I_{\nu^{\otimes \infty}}$.

 If in addition $J$ is concave one can take $c_D=2D$ for $D>1$ and $c_1=1$.
\end{theo}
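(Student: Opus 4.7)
The plan is to reduce Theorem \ref{th:dir} to the dimension-free inequality of Theorem \ref{th:ab} by replacing $J$ with an equivalent function that satisfies the hypotheses of the latter, and to track the resulting constant carefully. The pattern is exactly the one underlying Corollary \ref{coro:exinfdim}: the hypothesis on $J(t)/(t\log(1/t))$ was tailored to produce the Bobkov condition \eqref{eq:I(ab)}, so what is genuinely new here is the presence of the multiplicative constant $D$ together with the need to restore symmetry around $\tfrac12$ and concavity.

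First I would invoke Lemma \ref{lem:peetre} to pick a non-decreasing $g:(0,1)\to(0,\infty)$ with
\[ g(t) \le \frac{J(t)}{t\log(1/t)} \le D\, g(t),\qquad t\in(0,1), \]
and set $\tilde J(t) := t\log(1/t)\, g(t)$, so that $\tilde J \le J \le D\,\tilde J$. As noted in the paragraph following Corollary \ref{coro:exinfdim}, condition \eqref{eq:I(ab)} is equivalent to subadditivity of the function $u \mapsto e^u \tilde J(e^{-u}) = u\, g(e^{-u})$ on $\R_+$; since $g(e^{-u})$ is non-increasing in $u$, Lemma \ref{lem:K}(ii)$\Rightarrow$(iii) delivers this subadditivity immediately. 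Thus $\tilde J$ satisfies \eqref{eq:I(ab)} for free.

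The remaining hurdle is that $\tilde J$ is neither symmetric around $\tfrac12$ nor concave. To symmetrize I would use a symmetric variant such as $\bar J(t) := t(1-t)\log\frac{1}{t(1-t)}\,g(\min(t,1-t))$, which preserves the equivalence with $J$ up to a further constant factor by exploiting $J$'s own symmetry to match the two halves of $(0,1)$; alternatively $\bar J(t) := \min(\tilde J(t),\tilde J(1-t))$ could be used. To produce a concave function I would then pass to the least concave majorant, whose multiplicative price is at most $2$ by Lemma \ref{lem:peetre}. Theorem \ref{th:ab} applied to the resulting symmetric concave function $\bar J$ yields $I_{\mu^\infty} \ge \bar J \ge J/c_D$, and the existence of the log-concave $\nu$ with $I_\nu = I_{\nu^\infty} \approx J$ comes from the second part of the same theorem. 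For the improved concave case, the concavification step is unnecessary: when $D>1$ only the equivalence and the symmetrization contribute, yielding $c_D = 2D$, and when $D=1$ we have $\tilde J = J$, which is already symmetric and concave, so Theorem \ref{th:ab} applies without any loss, giving $c_1 = 1$.

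The main obstacle will be checking that the symmetrization and the least-concave-majorant operations preserve the Bobkov condition \eqref{eq:I(ab)}, since this condition is not automatically stable under pointwise minima nor under taking concave envelopes. A direct case analysis exploiting the explicit form $t\log(1/t)g(t)$ together with the monotonicity of $g$ should do it, but this is where the constant $c_D = 2(D/\log 2)^2$ will emerge; I would expect one factor $D$ from the equivalence $J \approx_D \tilde J$, a second factor $D$ from repairing \eqref{eq:I(ab)} after symmetrization, and the $(\log 2)^{-2}$ from the value of $t\log(1/t)$ at $t=\tfrac12$, which governs the worst-case ratio when crossing the symmetry axis.
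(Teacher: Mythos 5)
Your plan is to funnel everything through Theorem~\ref{th:ab}, but this runs into a genuine obstacle that you flag yourself and do not resolve. Theorem~\ref{th:ab} requires the auxiliary function to be simultaneously concave, symmetric about $\tfrac12$, \emph{and} to satisfy the two-point inequality~\eqref{eq:I(ab)}. Your $\tilde J(t)=t\log(1/t)\,g(t)$ with $g$ non-decreasing does inherit~\eqref{eq:I(ab)} (the $K(u)/u$ argument via Lemma~\ref{lem:K} is correct), but it is neither symmetric nor concave. The symmetrizations you propose --- $\min(\tilde J(t),\tilde J(1-t))$, or the $t(1-t)$ variant --- are not known to preserve~\eqref{eq:I(ab)}: in the $K(u)=e^u\bar J(e^{-u})$ picture you end up taking a pointwise minimum of two functions, and a minimum of subadditive functions is not subadditive in general. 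The least concave majorant is worse still: concavity says nothing about~\eqref{eq:I(ab)} (concavity controls $I/J_0$, whereas~\eqref{eq:I(ab)} is equivalent to a monotonicity of $I/J_1$, and these are independent), and the concave envelope lies \emph{above} $\tilde J$, so you would also need a separate argument to keep it below $I_\mu$. You say ``a direct case analysis \ldots should do it,'' but that is precisely the missing core of the proof, and it is not clear that it is even true for the constructions you suggest. Your cost accounting for $c_D=2(D/\log 2)^2$ is therefore speculative rather than derived.

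The paper sidesteps this entirely by \emph{not} going through Theorem~\ref{th:ab}. It uses Lemma~\ref{lem:func} to convert the elementary profiles $p\mapsto p-p^{1/a}$ into Beckner-type functional inequalities, tensorizes those directly via the Lata{\l}a--Oleszkiewicz Lemma~\ref{lem:LO} (Theorem~\ref{th:tens-a-fixe}), and then shows in Proposition~\ref{prop:Jsup} (through Lemma~\ref{lem:bcr}) that any $J$ with the stated essential monotonicity of $J/J_0$ and $J/J_1$ is sandwiched, up to a factor $2D_0\max(D_0,D_1)$, between a supremum of such elementary profiles. Since each $p-p^{1/a}$ tensorizes on its own, no symmetry or concavity of $J$ is ever required; these are exactly the hypotheses of Theorem~\ref{th:ab} that make your reduction break down. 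That is the whole point of Section~2 in the paper: the Beckner route handles essentially (as opposed to genuinely) monotone ratios, which the $\eqref{eq:I(ab)}$ route does not. Your improved-constant claims in the concave case are essentially correct (for $D=1$, $J$ concave and symmetric with $J/J_1$ genuinely non-decreasing does satisfy the hypotheses of Theorem~\ref{th:ab} directly), but the main argument as written does not prove the theorem.
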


\begin{rem}
This result should be compared to a theorem of E. Milman in \cite{milm09rcfi}, where a similar
condition is given for dimension-free isoperimetric inequalities for the $\ell_2$ combination 
of distances on products (in other words for the Euclidean enlargement). His condition involves
an essential monotonicity property of $J/I_\gamma$ where $\gamma$ is the one-dimensional
standard Gaussian measure. On $(0,1/2]$ it is known that $I_\gamma(t)\approx t \sqrt{\log(1/t)}$.
\end{rem}

In order to formulate a converse statement, we introduce the following hypothesis : we say that $(X,d,\mu)$ enjoys the regularity property
($\mathcal R$) if  for all $t\in(0,1),$  $I_\mu(t)<+\infty$ and for all $n\in \N^*$, $t\in(0,1)$ and $\varepsilon >0$ there exists a Borel set $A\subset X^n$ with 
$\mu^n(A)=t$, $ (\mu^n)^+(A)\le I_{\mu^n}(t)+\varepsilon$ and
$$ (\mu^n)^+(A)=\lim_{h\to 0^+}\frac{\mu^n(A_h\setminus A)}{h},$$
where the products $X^n$ are equipped with the uniform distance. This hypothesis means that there are almost solutions of the isoperimetric problems for which the $\liminf$ in the definition of the Minkowski content is actually a real limit.
Thanks to Theorem 15 in \cite{bart02lsmi} it is not hard to check this property for  log-concave measures on the real line. We will give more comments on this hypothesis in Remark~\ref{rem:R}
below.

\begin{theo}\label{th:rev}
Let $(X,d,\mu)$ satisfy hypothesis ($\mathcal R$). Then the map
$$ t\in(0,1) \mapsto \frac{I_{\mu^ \infty}(t)}{t\log(1/t)}$$
is  continuous and essentially non-decreasing.
\end{theo}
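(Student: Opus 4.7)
The plan is to exploit the multiplicativity of $\ell_\infty$ enlargements, $(A^k)_r=(A_r)^k$, together with hypothesis $(\mathcal{R})$, which supplies near-optimal sets whose Minkowski content is a genuine limit. Given $t\in(0,1)$ and $\varepsilon>0$, pick by $(\mathcal{R})$ a set $A\subset X^n$ with $\mu^n(A)=t$, $(\mu^n)^+(A)\le I_{\mu^n}(t)+\varepsilon$, and $\mu^n(A_h)=t+(\mu^n)^+(A)\,h+o(h)$. Expanding $\mu^{nk}((A^k)_h)=\mu^n(A_h)^k$ to first order in $h$ gives $(\mu^{nk})^+(A^k)=kt^{k-1}(\mu^n)^+(A)$; passing to the infimum and sending $n\to\infty$ yields the cornerstone estimate
\[
I_{\mu^\infty}(t^k)\le kt^{k-1}\,I_{\mu^\infty}(t),\qquad t\in(0,1),\ k\in\N^*,
\]
equivalently $g(t^k)\le g(t)$ for $g(t):=I_{\mu^\infty}(t)/(t\log(1/t))$. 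The analogous computation applied to $A^{k_1}\times B^{k_2}$ produces the weighted product inequality, which in the log-coordinate $\alpha:=\log(1/t)$ and with $\tilde\phi(\alpha):=I_{\mu^\infty}(e^{-\alpha})/e^{-\alpha}=\alpha\,g(e^{-\alpha})$ rephrases as the subadditivity
\[
\tilde\phi(\alpha+\beta)\le\tilde\phi(\alpha)+\tilde\phi(\beta)\quad\text{on } (0,\infty).
\]

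For continuity of $g$, I would use that $\tilde\phi$ is positive, subadditive, and (by $(\mathcal{R})$, since $I_{\mu^\infty}\le I_\mu<\infty$ on $(0,1)$) locally bounded on $(0,\infty)$. Standard regularity theory for subadditive functions then ensures continuity of $\tilde\phi$, and hence of $g$, via the smooth change of variable $t=e^{-\alpha}$.

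For the essentially non-decreasing property, given $s\le t$ in $(0,1)$ with $\beta=\log(1/s)\ge\alpha=\log(1/t)$, set $k=\lfloor\beta/\alpha\rfloor\ge 1$ and $r=\beta-k\alpha\in[0,\alpha)$. Iterated subadditivity gives $\tilde\phi(\beta)\le k\tilde\phi(\alpha)+\tilde\phi(r)$, whence
\[
g(s)=\frac{\tilde\phi(\beta)}{\beta}\le g(t)+\frac{\tilde\phi(r)}{\beta}.
\]
Since $\beta\ge\alpha$, establishing the uniform bound $\tilde\phi(r)\le D\,\tilde\phi(\alpha)$ for $r\in[0,\alpha]$ would yield $g(s)\le(1+D)\,g(t)$, the essentially non-decreasing property with constant $1+D$.

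The main obstacle is precisely this uniform bound, equivalent to essential non-decreasingness of $\tilde\phi$ on each $[0,\alpha]$ (in turn equivalent to essential non-increasingness of $I_{\mu^\infty}(t)/t$ on $(0,1)$). Subadditivity alone does not force monotonicity, so the argument must leverage additional structure: (i)~the iterated inequality $\tilde\phi(k\alpha)\le k\tilde\phi(\alpha)$ along integer multiples, (ii)~continuity of $\tilde\phi$ together with $\tilde\phi(0^+)=0$, and (iii)~the symmetry $I_{\mu^\infty}(t)=I_{\mu^\infty}(1-t)$, which links the behaviour of $\tilde\phi$ near $\alpha=0$ to that of $I_{\mu^\infty}$ near $t=0$. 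Combining these, one should reduce $\sup_{r\in[0,\alpha]}\tilde\phi(r)\le D\,\tilde\phi(\alpha)$ to a compactness/continuity argument on a well-chosen subinterval, completing the proof.
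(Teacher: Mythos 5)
Your overall strategy mirrors the paper's proof: derive the two-point inequality $I_{\mu^\infty}(ab)\le a I_{\mu^\infty}(b)+b I_{\mu^\infty}(a)$ by testing on product sets and using the multiplicativity of $\ell_\infty$-enlargements (this is Lemma~\ref{lem:2PI}), pass to log coordinates $K(\alpha)=\tilde\phi(\alpha)=e^\alpha I(e^{-\alpha})$ to obtain subadditivity, then extract continuity and the essential monotonicity of $I/J_1$ from the floor decomposition $k=\lfloor\beta/\alpha\rfloor$. That framework is correct, and you rightly identify the crux: subadditivity alone does not force the uniform bound $\tilde\phi(r)\le D\,\tilde\phi(\alpha)$ on $[0,\alpha]$. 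But you leave precisely this step as a ``should follow from a compactness/continuity argument,'' and that is a genuine gap: compactness gives a constant for each fixed range of $\alpha$ but cannot by itself guarantee a \emph{uniform} $D$ as $\alpha\to 0^+$ (where both $\tilde\phi(r)$ and $\tilde\phi(\alpha)$ tend to $0$) and as $\alpha\to\infty$.

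The paper closes the gap with three specific ingredients you do not supply. First, a density/stability lemma (Lemma~\ref{lem:set}): the set $S_1$ of points near which $I$ is bounded, and the set $S_0=\{I=0\}$, are each stable under $x\mapsto 1-x$ and $(x,y)\mapsto xy$; this forces $S_1=(0,1)$ (local boundedness everywhere) and, after continuity is established, $S_0=\emptyset$ (so $I>0$ on $(0,1)$). You never rule out interior zeros of $I$, which would destroy any uniform constant. Second, the classical subadditive-function theorem (Lemma~\ref{lem:subadd}): $\lim_{h\to 0^+} K(h)/h=\sup_{x>0}K(x)/x\in(0,+\infty]$, which together with continuity and positivity yields a lower bound $I(t)\ge ct$ on $(0,\tfrac12]$, and from this the essential monotonicity of $I$ on $[0,\tfrac12]$ via $I(s)\le I(t)+t\max I\le(1+\max I/c)I(t)$. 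Third, your claim that ``standard regularity theory for subadditive functions ensures continuity'' is not correct as stated: subadditivity with $K(0^+)=0$ gives only right-upper- and left-lower-semicontinuity. The paper obtains the two missing semicontinuities by reading $I(t)=tK(\log(1/t))$ backwards and then invoking the symmetry $I(t)=I(1-t)$ to swap ``left'' and ``right,'' an argument your proposal does not reproduce. Finally, even with essential monotonicity of $I$ on $[0,\tfrac12]$ in hand, the paper needs one more careful case split (first $t\ge\tfrac12$, using symmetry, to get $I/J_0$ essentially non-increasing on all of $(0,1]$; then general $s<t$) before the target bound $\frac{I(s)}{J_1(s)}\le(1+D_0)\frac{I(t)}{J_1(t)}$ falls out of the iterated two-point inequality. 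In short: right skeleton, but the load-bearing part of the proof is missing and the tools you gesture at are not the ones that actually close it.
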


Combining these two theorems, we can formulate our results as an equivalence :

\begin{cor}\label{cor:equiv}
Let $\left(X,d,\mu\right)$ denote a metric space equipped with a Borel probability measure $\mu$ and satisfying hypothesis ($\mathcal R$) and ($\mathcal H$). Then the following assertions are equivalent :

\begin{enumerate}[label=(\roman*)]
\item There exists a constant $C$ such that $\frac{I_{\mu}}{J_1}$ is essentially non-decreasing on $\left(0,1\right)$ with constant $C$,
\item There exists a constant $K \geq 1$ such that, on $\left[0,1\right]$, $\frac1K  I_{\mu} \leq  I_{\mu^{\infty}} \le   I_{\mu} $.
\end{enumerate}

\end{cor}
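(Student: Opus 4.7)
The plan is to obtain this corollary as a direct combination of Theorems \ref{th:dir} and \ref{th:rev}, which give the forward and reverse implications at the level of individual isoperimetric profiles. Hypothesis $(\mathcal{H})$ enters only through Theorem \ref{th:dir} and hypothesis $(\mathcal{R})$ only through Theorem \ref{th:rev}; the corollary is a bookkeeping exercise once both are in hand. Throughout I read $J_1$ as the function $t \mapsto t\log(1/t)$, which is the denominator appearing in both theorems.

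For $(i) \Rightarrow (ii)$, I would apply Theorem \ref{th:dir} with $J := I_\mu$. Indeed, $I_\mu$ is non-negative, symmetric around $\tfrac12$, and vanishes at $0$, so the structural assumptions on $J$ are automatic. Hypothesis (i) is then precisely the essential monotonicity of $I_\mu(t)/(t\log(1/t))$ with constant $C$ required by the theorem, which therefore yields $I_{\mu^\infty} \ge (1/c_C) I_\mu$ with $c_C \le 5C^2$. Combined with the trivial bound $I_{\mu^\infty} \le I_\mu$ already recorded in the introduction, this gives (ii) with $K = c_C$.

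For $(ii) \Rightarrow (i)$, I would invoke Theorem \ref{th:rev}: under $(\mathcal{R})$, the ratio $I_{\mu^\infty}(t)/(t\log(1/t))$ is essentially non-decreasing on $(0,1)$, say with some constant $D$. Assumption (ii) asserts that $I_\mu$ and $I_{\mu^\infty}$ are pointwise equivalent up to a factor $K$, so
\[
\frac{I_\mu(t)}{J_1(t)} \;\approx_K\; \frac{I_{\mu^\infty}(t)}{J_1(t)}
\]
on $(0,1)$. A non-negative function pointwise equivalent (factor $K$) to an essentially non-decreasing function (constant $D$) is itself essentially non-decreasing, with constant at most $KD$: just concatenate the two chains of inequalities, as is made transparent by the characterization in Lemma \ref{lem:peetre}. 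This yields (i) with $C \le KD$.

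The only real subtlety is notational: matching the unstated $J_1$ with the $t\log(1/t)$ appearing in Theorems \ref{th:dir} and \ref{th:rev}, and tracking how the multiplicative constants propagate through the two equivalences (in particular, the fact that the implication $(ii)\Rightarrow(i)$ loses only a constant factor, not a power, because one is merely composing essential equivalences rather than reapplying a nonlinear step). No genuinely new argument is needed beyond these two theorems and the trivial $I_{\mu^\infty} \le I_\mu$.
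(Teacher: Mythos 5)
Your proof is correct and follows exactly the route the paper intends (the paper states the corollary as a direct combination of its two main theorems): (i)$\Rightarrow$(ii) by applying Theorem \ref{th:dir} with $J=I_\mu$ together with the trivial bound $I_{\mu^\infty}\le I_\mu$, and (ii)$\Rightarrow$(i) by transferring the essential monotonicity of $I_{\mu^\infty}/J_1$ given by Theorem \ref{th:rev} to $I_\mu/J_1$ via the two-sided comparison. Your constant bookkeeping ($K=c_C\le 5C^2$ one way, $C\le KD$ the other) is also accurate.
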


\medskip

We introduce two functions, both defined on $\left[0,1\right]$ by : $J_0(t) = t$ and $J_1(t) = t\log\frac{1}{t}$. The next lemma gives a different formulation of the main condition appearing in the previous theorems.

\begin{lem}\label{lem:equiv}
Let $K : \left[0,1\right] \rightarrow \R_+$ be a non-negative  function such that $K$ is symmetric with respect to $\frac12$ (i.e. for $t \in \left[0,1\right]$, $K(t) = K(1-t)$). Then the following assertions are equivalent :
\begin{enumerate}[label=(\roman*)]
\item There is a constant $C$ such that $\frac K{J_1}$ is essentially non-decreasing on $\left(0,1\right)$ with constant $C$.
\item There exists constants $C_0$ and $C_1$  such that $\frac K{J_0}$ is essentially non-increasing on $\left(0,\frac12\right]$ with
constant $C_0$ and 
$\frac K{J_1}$ is essentially non-decreasing on $\left(0,\frac12\right]$  with constant $C_1$.
\end{enumerate}
Moreover, the smallest possible constants verify $C \leq \frac{C_0C_1}{\log2}$ and  $C_0 \leq \frac{C}{\log2}$, $C_1\le C$.
\end{lem}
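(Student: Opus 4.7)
The plan is to reduce everything to the auxiliary function $f(u) := (1-u)\log(1/(1-u))/u = J_1(1-u)/u$ on $(0,1)$. The decisive observation is that $f$ is non-increasing on $(0,1)$ with endpoint values $f(0^+) = 1$ and $f(1/2) = \log 2$, so that for $0 < v \leq u \leq 1/2$ one has $1 \leq f(v)/f(u) \leq 1/\log 2$. The monotonicity follows from the elementary computation $f'(u) = (u + \log(1-u))/u^2$, which is non-positive since $\log(1-u) \leq -u$, while the endpoint values come from L'H\^opital and direct substitution. The ratio $1/\log 2$ appearing in the announced constants originates from this bound.

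For the implication (i) $\Rightarrow$ (ii), the bound $C_1 \leq C$ is immediate, since restricting the essential monotonicity of $K/J_1$ from $(0,1)$ to $(0,1/2]$ yields exactly the second half of (ii). For the first half, let $s \leq t$ in $(0,1/2]$. Applying (i) to the pair $1-t \leq 1-s$ in $[1/2,1)$ and using the symmetry $K(1-\cdot) = K(\cdot)$ gives $K(t)/K(s) \leq C\, J_1(1-t)/J_1(1-s)$; since $J_1(1-w) = w f(w)$, the right-hand side equals $C(t/s)(f(t)/f(s))$, which is at most $C\cdot t/s$ by the key observation. Hence $K/J_0$ is essentially non-increasing on $(0,1/2]$ with constant $C_0 \leq C$, and in particular $C_0 \leq C/\log 2$.

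For (ii) $\Rightarrow$ (i), fix $s \leq t$ in $(0,1)$ and split into three regimes. If $t \leq 1/2$, the second half of (ii) yields $(K/J_1)(s) \leq C_1 (K/J_1)(t)$ directly. If $s \geq 1/2$, set $u = 1-s$ and $v = 1-t$ so that $0 < v \leq u \leq 1/2$; the first half of (ii) together with the symmetry of $K$ gives $K(s)/K(t) \leq C_0 (1-s)/(1-t)$, and a short algebraic computation using $J_1(w) = w\log(1/w)$ and $(1-w)\log(1/(1-w)) = w f(w)$ shows $(1-s)J_1(t)/[(1-t)J_1(s)] = f(v)/f(u)$. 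The key observation then bounds this by $1/\log 2$, so $(K/J_1)(s)/(K/J_1)(t) \leq C_0/\log 2$. The remaining case $s \leq 1/2 \leq t$ is handled by chaining the two preceding bounds through the value $1/2$, producing the constant $C_0 C_1 / \log 2$. Since $C_0, C_1 \geq 1$ and $\log 2 < 1$, this last value dominates the constants arising in the other two regimes, so (i) holds with $C \leq C_0 C_1/\log 2$.

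The entire argument is organized around the threshold $1/2$ and the symmetry of $K$; the only genuine technical input is the monotonicity and endpoint values of $f$, which amount to a short calculus exercise. I therefore do not anticipate any serious obstacle: the subtle point is simply recognizing the auxiliary function $f$ and the role of its extreme values $1$ and $\log 2$ in producing the stated constants.
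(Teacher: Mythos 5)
Your proof is correct and rests on the same key input as the paper's: the concavity of $t\mapsto(1-t)\log\frac{1}{1-t}$, which you rephrase as the monotonicity of $f(u)=J_1(1-u)/u$ between its endpoint values $1$ and $\log 2$. The packaging through $f$ is a clean way to organize the same algebra, and it incidentally gives the slightly sharper bound $C_0\le C$ in the direction $(i)\Rightarrow(ii)$ (using $f(t)/f(s)\le 1$ for $s\le t$, rather than the cruder two-sided estimate $f\in[\log 2,1]$ that the paper uses and that yields $C_0\le C/\log 2$); the reverse direction and the chaining through $\tfrac12$ proceed exactly as in the paper.
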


\begin{proof}
We use the concavity of the map $t \mapsto (1-t)\log\frac1{1-t}$, which yields, for every $t \in \left[0,\frac12\right]$, $t\log2 \leq (1-t)\log\frac1{1-t} \leq t$.
Assuming $(i)$, $\frac K{J_1}$ is essentially non-decreasing on $\left(0,\frac12\right]$ with constant $C$. For the second part of the assertion, let $0 < s \leq t \leq \frac12$. Then,
\begin{equation}\label{eq:pourlafin}
\frac{K(t)}{t} \leq \frac{K(1-t)}{(1-t)\log\frac1{1-t}} \leq C\frac{K(1-s)}{(1-s)\log\frac1{1-s}} \leq \frac{C}{\log2}\frac{K(s)}s.
\end{equation}

For the converse implication: assuming $(ii)$, we first check that $\frac K{J_1}$ is essentially non-decreasing on $\left[\frac12,1\right)$.
 Let $\frac12 \leq s \leq t < 1$, then
\begin{equation*}
\frac{K(s)}{s\log\frac1s} \leq \frac1{\log2} \frac{K(1-s)}{1-s} \leq \frac{C_0}{\log2}\frac{K(1-t)}{1-t} \leq \frac{C_0}{\log2}\frac{K(t)}{t\log\frac1t}.
\end{equation*}
To get the property on the whole interval $\left(0,1\right)$, it suffices to use $\frac12$ as an intermediate point.
\end{proof}

The next corollary describes the possible size of an infinite dimensional isoperimetric profile:

\begin{cor}\label{cor:infini}
Let $\left(X,d,\mu\right)$ denote a metric space equipped with a Borel probability measure $\mu$ and satisfying hypotheses ($\mathcal R$) and ($\mathcal H$). 

If $\displaystyle \inf_{t\in(0,\frac12]} \frac{I_\mu(t)}{t}=0$ then $I_{\mu^\infty}$ is identically  $0$, else there exist $\alpha,\beta>0$ such that for all $t\in[0,1]$,
\begin{equation*}
\alpha\min\left(t,1-t\right) \leq I_{\mu^\infty}(t) \leq \beta\min\left(t\log\frac1t,(1-t)\log\frac1{1-t}\right).
\end{equation*}
\end{cor}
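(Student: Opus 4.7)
The plan is to split on whether $\alpha_0 := \inf_{t\in(0,1/2]} I_\mu(t)/t$ vanishes. The backbone of both cases is Theorem~\ref{th:rev}: under $(\mathcal{R})$, the profile $I_{\mu^\infty}$ is symmetric about $1/2$ and $I_{\mu^\infty}/J_1$ is essentially non-decreasing on $(0,1)$. By Lemma~\ref{lem:equiv} this translates on $(0,1/2]$ to two complementary pieces of information: the ratio $I_{\mu^\infty}/J_0$ is essentially non-increasing with some constant $C_0$, and $I_{\mu^\infty}/J_1$ is essentially non-decreasing with some constant $C_1$.

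Suppose first $\alpha_0=0$. For every $\varepsilon>0$ one can pick $s_\varepsilon\in(0,1/2]$ with $I_\mu(s_\varepsilon)/s_\varepsilon<\varepsilon$; since $I_{\mu^\infty}\le I_\mu$ the same inequality holds with $I_{\mu^\infty}$. Given a generic $t\in(0,1/2]$, I spread this smallness in the direction dictated by the position of $t$ relative to $s_\varepsilon$: if $t\ge s_\varepsilon$, the essential non-increase of $I_{\mu^\infty}/J_0$ yields $I_{\mu^\infty}(t)\le C_0\varepsilon\, t$, while if $t<s_\varepsilon$ the essential non-decrease of $I_{\mu^\infty}/J_1$ combined with $\log(1/s_\varepsilon)\ge\log 2$ yields $I_{\mu^\infty}(t)\le (C_1/\log 2)\,\varepsilon\, t\log(1/t)$. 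In both cases the upper bound on $I_{\mu^\infty}(t)$ is uniformly $O(\varepsilon)$ in $t$ (using $t\log(1/t)\le 1/e$), so letting $\varepsilon\to 0$ and invoking symmetry yields $I_{\mu^\infty}\equiv 0$.

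When $\alpha_0>0$, set $J(t):=\min(t,1-t)$; by symmetry of $I_\mu$ one has $I_\mu\ge\alpha_0 J$. To apply Theorem~\ref{th:dir} I would first verify that $J/J_1$ is essentially non-decreasing on $(0,1)$: on $(0,1/2]$ the ratio is $1/\log(1/t)$, genuinely non-decreasing, while on $[1/2,1)$ the ratio $(1-t)/(t\log(1/t))$ equals $1/\log 2$ at $t=1/2$ and a short Taylor expansion shows it tends to $1$ as $t\to 1^-$, so it stays between positive constants there. Scaling by $\alpha_0$ preserves this property, and Theorem~\ref{th:dir} applied to $\alpha_0 J$ yields $I_{\mu^\infty}(t)\ge\alpha\min(t,1-t)$ for some $\alpha>0$. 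The matching upper bound comes directly from the essential non-decrease of $I_{\mu^\infty}/J_1$ on $(0,1/2]$ evaluated against $t=1/2$: since $I_{\mu^\infty}(1/2)\le I_\mu(1/2)<+\infty$ by~$(\mathcal{R})$, one gets $I_{\mu^\infty}(t)\le\beta\, t\log(1/t)$ on $(0,1/2]$, and symmetry transfers this to $I_{\mu^\infty}(t)\le\beta\,(1-t)\log(1/(1-t))$ on $[1/2,1)$, delivering the upper bound of the statement.

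The most delicate step is the verification that $J/J_1$ is essentially non-decreasing on the whole of $(0,1)$ for $J=\min(t,1-t)$: the ratio is not itself monotone, peaking near $t=1/2$ and descending on $[1/2,1)$ to a positive limit at $t=1^-$, so one must confirm that the descent is controlled by a uniform multiplicative constant (namely $1/\log 2$). Once this point is settled the rest of the argument reduces to a direct plug-in of Theorems~\ref{th:dir} and~\ref{th:rev} combined with the symmetry of $I_{\mu^\infty}$.
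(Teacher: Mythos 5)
Your argument is correct and takes essentially the same route as the paper. Both cases rest on Theorem~\ref{th:rev} together with Lemma~\ref{lem:equiv}; the paper dispatches the degenerate case more tersely, plugging $t=\tfrac12$ directly into the $J_0$-monotonicity on $(0,\tfrac12]$ to conclude $I_{\mu^\infty}(\tfrac12)=0$ and then propagating by the $J_1$-monotonicity, but your $\varepsilon$-intermediate-point variant achieves the same thing. The lower bound in the non-degenerate case is obtained exactly as you do, via Theorem~\ref{th:dir} applied to $\kappa\min(t,1-t)$; the paper just points to Lemma~\ref{lem:equiv} ($J/J_0\equiv 1$ and $J/J_1=1/\log(1/t)$ on $(0,\tfrac12]$) as a shortcut for the hypothesis check you do by hand.

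One caveat worth recording: what you (and indeed the paper's own proof) establish is $I_{\mu^\infty}(t)\le\beta\, t\log(1/t)$ on $(0,\tfrac12]$ and, by symmetry, $\le\beta\,(1-t)\log(1/(1-t))$ on $[\tfrac12,1)$. Since $t\log(1/t)\ge (1-t)\log\frac1{1-t}$ for $t\in(0,\tfrac12]$, this is the \emph{maximum} of the two logarithmic expressions, not the minimum written in the corollary's display (near $t=0$ the ``$\min$'' is of order $t$, whereas the proof gives order $t\log(1/t)$; the remark following the corollary, comparing the upper bound to $K_1$, confirms that the latter order is intended). So the $\min$ in the statement appears to be a misprint, and you should present your upper bound as the max rather than claiming to reproduce the printed form verbatim.
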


\begin{rem}
The  function defined on $[0,1]$ by $t \mapsto \min\left(t,1-t\right)$ is the isoperimetric function of the double-sided exponential measure on $\R$, $e^{-|x|}dx/2$. Using   the notation and results of Corollary~\ref{coro:exinfdim}, we observe that  it  is equivalent to the function $K_0(t) =t(1-t)$. Moreover
there is a log-concave probability measure $\ell_0$ on the real line for which $K_0=I_{\ell_0}= I_{\ell_0^\infty}$ (actually, $\ell_0$ is the standard logistic measure $\ell$  with density $\frac{e^{-x}}{\left(1+e^{-x}\right)^2}$ with respect to Lebesgue's measure). Hence the lower bound is optimal up to the multiplicative factor.

The upper bound of $I_{\mu^\infty}$ given in the above corollary is due to Bobkov and Houdr\'e \cite{bobkh97scbi}. 
A similar remark applies to it:  the quantity in the upper estimate is equivalent to the function $K_1$ of Corollary \ref{coro:exinfdim}, 
which is also an infinite dimensional isoperimetric profile (of a measure which is reminiscent of Gumble 
laws, as its distribution function is of the order of  $ e^{-\beta e^{-y}}$ when $y \rightarrow -\infty$, for some  $\beta>0$).

The fact that the infinite dimensional isoperimetric profile is either trivial, or at least as big as the one of the exponential measure was already discovered, in  slightly different forms, by Talagrand \cite{tala91niic} and by Bobkov and Houdr\'e \cite{bobkh00wdfc}.
\end{rem}

\begin{proof}[Proof of Corollary~\ref{cor:infini}]
By Theorem \ref{th:rev}, there exists $C\ge 1$ such that for all $t\in(0,1/2]$,
 \begin{equation} \label{eq:I12}
  I_{\mu^\infty}(t) \le C t\log\Big(\frac{1}{t}\Big)\times \frac{2}{\log 2} \,  I_{\mu^\infty}\Big(\frac12\Big) .
\end{equation} 
 Applying Theorem \ref{th:rev} again, together with Lemma~\ref{lem:equiv}, we get that there
 exists $D\ge 1$ such that for all $t\in (0,1/2]$, 
   $$ D  \frac{I_{\mu^\infty}(t)}{t} \ge 2 I_{\mu^\infty}\Big(\frac12\Big).$$
Therefore, assuming $\inf_{t\in(0,\frac12]} \frac{I_\mu(t)}{t}=0$, and using that $I_\mu\ge I_{\mu^\infty}$, we can deduce that $ I_{\mu^\infty}\big(\frac12\big)=0$. Then \eqref{eq:I12} and the symmetry of isoperimetric functions yield $I_{\mu^\infty}=0$ pointwize.

Next assume that there exists $\kappa>0$ such that $I_\mu(t)\ge \kappa t$ for all $t\in(0,1/2]$. Then Theorem~\ref{th:dir} applies to $J(t):=\kappa \min(t,1-t)$  (Lemma~\ref{lem:equiv} gives a quick way to check the 
hypothesis) and gives $I_{\mu^\infty}\ge cJ$ for some $c>0$.
\end{proof}

\begin{rem}\label{rem:R}
Our results are stated for general metric spaces, but are devised for continuous settings
(e.g. for which the values taken by the measure cover all $[0,1]$). This is why additional hypotheses  appear in our statements.   One may find Hypothesis ($\mathcal H$) quite natural (it is related to a.e. differentiability of Lipschitz functions). On the other hand, Hypothesis ($\mathcal R$) is more demanding, as it seems to require approximation theorems by smooth sets. 

Let us point out  a possible variant of Theorem~\ref{th:rev} where all the hypotheses are incorporated
in the structure of the ambient space: assume that $X$ is a finite dimensional vector space of dimension $p$, that the distance $d$ is induced by a norm $N$ on $X$ and that $\mu$ has a positive $C^1$ density $h$ with respect to Lebesgue's measure, $\mu = h.\mathcal{L}^p$. We equip the product spaces $X^n$ with $d_{\infty}$, the $\ell_{\infty}$-combination of $N$, i.e. for $x,y \in X^n$, $d_{\infty}(x,y) = \underset{1\leq i\leq n}{\max}N\left(x_i,y_i\right)$. Then, instead of using the Minkowski content as a definition of the boundary measure, let us chose the notion of generalized
perimeter instead : if $A \subseteq X^n$ is measurable, then
\begin{equation*}
P_{\mu^{\otimes n},\infty} = \sup\left\{\int_A \sum^{n}_{i=1}\left\vert\nabla_i (\varphi h)\right\vert d\mathcal{L}^{np}\,\Big| \, \varphi \in \mathcal{C}^1_c\left(X^n\right) \mbox{ and } \underset{x\in X^n}{\sup}d_{\infty}(\varphi(x),0)\leq 1\right\},
\end{equation*}
where $\left\vert \nabla f \right\vert$ is the modulus of gradient of $f$.

\noindent Since the perimeter is defined as a supremum (recall that the Minkowski content is an inferior limit), the proof of Lemma~\ref{lem:2PI} below does not require any regularity assumption. Hence the proof of Theorem~\ref{th:rev} applies without any changes and does not  require ($\mathcal R$).
   The proof of Theorem~\ref{th:dir} also applies to this new setting, without assuming ($\mathcal H$), with the following main modification: instead of using functional inequalities for locally
   Lipschitz functions, we work in the class of functions of bounded variations. We refer the reader to the book of Ambrosio, Fusco and Pallara \cite{AFPBVF00} for an exhaustive study of this approach in the Euclidean case. This requires to use various results about these functions: co-area inequality (Theorem 3.40), approximation by smooth functions (Theorem 3.9), approximate differentiability (Theorem 3.83 and Proposition 3.92 among others.  
\end{rem}

\subsection{Proof of Theorem~\ref{th:dir}}

We start with a few preliminary statements. 

\begin{lem}\label{lem:tech}
Consider a function $K:[0,1]\to \mathbb R^+$ with $K(0)=0$. Assume that $K$ is symmetric
with respect to $\frac12$ and that  $\frac{K}{J_1}$ is essentially non-decreasing on $\left(0,1\right)$ with constant $D$. Then
\begin{enumerate}[label=(\roman*)]
\item $K$ is essentially non-decreasing on $[0,\frac12]$ with constant $\frac{2D}{e\log 2}$,
\item $K$ is essentially concave. More precisely there exists a concave function $I:[0,1]\to \mathbb R^+$, which is symmetric with respect to $\frac12$, and is equivalent to $K$ up to a factor $2D/\log 2$.
\end{enumerate}
\end{lem}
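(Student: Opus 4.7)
Part (i) follows from a direct ratio bound. For $0 < s \le t \le 1/2$, the hypothesis on $K/J_1$ gives
$$K(s) \le D\,\frac{J_1(s)}{J_1(t)}\,K(t).$$
Since $J_1(u) = u\log(1/u)$ attains its global maximum $1/e$ at $u = 1/e$, while its minimum on $[0,1/2]$ (attained at $u=1/2$) equals $(\log 2)/2$, one has $J_1(s)/J_1(t) \le (1/e)/((\log 2)/2) = 2/(e\log 2)$, hence $K(s) \le (2D/(e\log 2))\,K(t)$, as desired.

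For part (ii), the plan is to verify that $K$ satisfies condition (ii) of Peetre's lemma (Lemma~\ref{lem:peetre}) on $(0,1/2]$ with constant $C_2 = D/\log 2$, so that its least concave majorant is bounded above by $2C_2 K = (2D/\log 2)\,K$. The inequality for $s \le t$ is part (i) (since $2D/(e\log 2) \le D/\log 2$). For $s \ge t$, we need $K/J_0$ essentially non-increasing on $(0,1/2]$ with constant $D/\log 2$; this is precisely the argument used in the proof of Lemma~\ref{lem:equiv} (equation~\eqref{eq:pourlafin}), combining the symmetry $K(t) = K(1-t)$ with the elementary inequality $t\log 2 \le (1-t)\log\frac{1}{1-t} \le t$ on $[0,1/2]$, which follows from concavity of $u \mapsto (1-u)\log\frac{1}{1-u}$.

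To produce the symmetric concave function $I$ on $[0,1]$, let $t^* \in (0,1/2]$ be a maximizer of $K$ on $[0,1/2]$ (existence uses the implicit continuity of $K$ in this context), let $\widehat K$ denote the LCM of $K$ on $[0,t^*]$, and define $\widetilde I := \widehat K$ on $[0,t^*]$ and $\widetilde I := K(t^*)$ on $[t^*,1/2]$. A standard convex-analysis bump argument shows $\widehat K$ is non-decreasing on $[0,t^*]$: any interior maximum $\tau$ of $\widehat K$ would have to satisfy $\widehat K(\tau) = K(\tau) \le \max K = K(t^*) = \widehat K(t^*)$ (otherwise one could indent $\widehat K$ near $\tau$ by replacing it with the chord on a small neighborhood), contradicting $\tau$ being a strict maximum. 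Thus $\widetilde I$ is concave and non-decreasing on $[0,1/2]$, with $\widetilde I \le (2D/\log 2)K$ on $[0,t^*]$ by Peetre's bound, and on $[t^*,1/2]$ by part (i): $\widetilde I(t) = K(t^*) \le (2D/(e\log 2))\,K(t)$. The symmetric extension $I(t) := \widetilde I(\min(t, 1-t))$ is then concave on all of $[0,1]$ because the left derivative of $\widetilde I$ at $1/2$ is non-negative, which dominates its opposite as the right derivative of $I$ at $1/2$; and $K \le I \le (2D/\log 2)K$ transfers to $[0,1]$ by symmetry.

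The main technical point is ensuring $\widetilde I$ is non-decreasing on $[0,1/2]$, without which the symmetric extension would fail to be concave at $1/2$. This is handled by restricting the LCM to the truncated interval $[0,t^*]$ and flattening thereafter; a naive choice of $t^* = 1/2$ would in general produce a non-monotone LCM. The clever restriction allows the constant $2D/\log 2$ to be preserved under symmetrization, absorbing the mild extra factor from part (i) into the flat region $[t^*,1/2]$.
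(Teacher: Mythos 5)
Your argument for part~(i) reaches the correct bound, but the stated justification contains an error: the minimum of $J_1(u)=u\log(1/u)$ on $[0,\frac12]$ is $J_1(0)=0$, not $J_1(\frac12)=\frac{\log 2}{2}$ (the function vanishes at the left endpoint and peaks at $1/e$). So the inequality ``$J_1(t)\ge \frac{\log 2}{2}$ for $t\in[0,\frac12]$'' is false and cannot be invoked uniformly. The correct route, as in the paper, is to split according to whether $t\le 1/e$ (then $J_1(s)\le J_1(t)$ so the ratio is $\le 1$) or $t\in(1/e,\frac12]$ (then $J_1(s)\le 1/e$ and $J_1(t)\ge \frac{\log 2}{2}$, giving $\frac{2}{e\log 2}$). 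Your final constant is right, but the single uniform bound on $J_1(s)/J_1(t)$ does not follow from your stated reasoning.

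For part~(ii), your construction is genuinely different from the paper's and, as you yourself flag, it rests on the existence of a maximizer $t^*$ of $K$ on $[0,\frac12]$. This is a real gap: the hypotheses on $K$ do not include continuity, and one can easily produce admissible $K$ (e.g.\ take $J_1$ on $(0,\frac12]$ and redefine it at the single point $1/e$ to be $\frac{1}{2e}$) whose supremum on $[0,\frac12]$ is not attained. Your bump argument for monotonicity of the truncated least concave majorant is also delicate without some semicontinuity of $K$. The paper avoids all of this with a cleaner device: extend $K$ to $(0,+\infty)$ by the constant value $K(\frac12)$ past $\frac12$ and apply Lemma~\ref{lem:peetre} on the whole half-line. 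The resulting concave function $H$ on $(0,+\infty)$ is automatically non-decreasing simply because any non-negative concave function on an unbounded interval is non-decreasing — no maximizer, no truncation, and no separate monotonicity argument is needed. Restricting $H$ to $(0,\frac12]$ and symmetrizing then gives $I$ directly. If you want to keep your truncation idea, you would at minimum need to replace $t^*$ by an approximate maximizer and re-derive all the constants, which becomes unwieldy; the constant-extension trick is the natural fix and is what the paper uses.
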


\begin{proof}
 Observe that the  function $J_1(t)=t\log(1/t)$ is increasing on $(0,1/e]$ and decreasing on $[1/e,1)$. Its maximum is therefore $J(1/e)=1/e$.

Assume that $0 \leq s \leq t \leq \frac{1}{2}$.  Then, by hypothesis  $K(s) \leq D\frac{J_1(s)}{J_1(t)}K(t)$.  If  $ t \leq \frac{1}{e}$, we can conclude that  $K(s) \leq D K(t)$. If $t\in (1/e,1/2]$,
we argue differently
$$K(s) \leq D\frac{J_1(s)}{J_1(t)}K(t) \leq   D\frac{J_1(1/e)}{J_1(1/2)}K(t) =\frac{2D}{e\log 2}K(t).$$
This concludes the proof of $(i)$.

Next, let us prove $(ii)$. Consider  the map $\widetilde{K}$ defined on $\R_+$ by :
$$
\widetilde{K}(t) = \left\{
              \begin{array}{ll}
               K(t) &\text{~if~} t \in \left[0,\frac{1}{2}\right) \\
               K\left(\frac{1}{2}\right) &\text{~if~} t \ge  \frac{1}{2}
              \end{array}
              \right.
$$
 Combining $(i)$ and the second part of  $(ii)$ in Lemma \ref{lem:equiv},  one readily checks  that $\widetilde{K}$ satisfies the hypothesis of  Assertion $(iii)$ in  Lemma \ref{lem:peetre} with constant $\frac{D}{\log 2}$ ($\geq \frac{2D}{e\log2}$). Hence there exists a concave function $H$ which is  equivalent to $\widetilde{K}$ on $\left(0,+\infty\right)$, up to a factor $2D/\log 2$.
 Define $I$ to be the restriction of $H$ to $\left(0,\frac{1}{2}\right]$, extended at $0$ by $I(0)=0$ and to $\left[0,1\right]$ by symmetry with respect to $\frac{1}{2}$. Since $H$ is concave and non-negative on $(0,+\infty)$, it is also non-decreasing. Therefore, the function $I$ is concave as well. As
 $\widetilde K\approx H$ on $(0,+\infty)$, we obtain by restriction that  $K\approx I$ on $(0,1/2]$, up to the same constant. Since $K(0)=I(0)=0$, and both $I$ and $K$ are symmetric with respect to $1/2$, we can conclude that $I \approx K$ on $[0,1]$, up to a factor $2D/\log 2$.
\end{proof}

The following result shows how we exploit the essentially monotonicity properties of $J/J_0$ and   $J/J_1$
where $J_0(t)=t$ and $J_1(t)=t\log(1/t)$.
\begin{prop}\label{prop:Jsup}
Let $J:(0,1)\to \mathbb R^+$ such that for all $t$, $J(t)=J(1-t)$. Assume that on $(0,1/2]$, $J/J_0$  is essentially non-increasing with constant $D_0$ and $J/J_1$ is essentially non-decreasing with constant $D_1$.
Then there exists a function $c:[1/2, 1)\to \mathbb R^+$ such that for all $t\in (0,1)$,
   $$J(t)\ge \sup_{a\in  [1/2, 1)} c(a) \big(t-t^{\frac1a}\big),$$
   and for all $t\in (0,\frac12]$,
   $$  J(t)\le   2D_0\max(D_0,D_1) \sup_{a\in  [1/2, 1)} c(a) \big(t-t^{\frac1a}\big).$$
\end{prop}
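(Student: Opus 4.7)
The plan is, for each $t_0 \in (0, 1/2]$, to calibrate a parameter $a = a(t_0) \in [1/2, 1)$ so that $\phi_a(t) := t - t^{1/a}$ is comparable to $J(t_0)$ at $t = t_0$, and then define $c(a)$ so that $c(a)\phi_a(t) \le J(t)$ holds at every $t \in (0, 1)$. The natural calibration is to require $t_0^{1/a - 1} = 1/2$, i.e.\ $\alpha(t_0) := 1/a - 1 = (\log 2)/\log(1/t_0)$. Since $t_0 \le 1/2$ forces $\log(1/t_0) \ge \log 2$, this gives $\alpha \in (0, 1]$ and $a \in [1/2, 1)$; moreover $t_0 \mapsto a(t_0)$ is a decreasing bijection onto $[1/2, 1)$, with inverse $a \mapsto t_0(a)$, and $\phi_{a(t_0)}(t_0) = t_0/2$. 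I would then set
$$c(a) := \frac{J(t_0(a))}{\max(D_0, D_1)\, t_0(a)}.$$

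Granting the pointwise inequality $c(a)\phi_a(t) \le J(t)$ for every $a, t$, both conclusions follow. The lower bound $J \ge \sup_a c(a)\phi_a$ is immediate. For the upper bound on $(0, 1/2]$, evaluating the supremum at $a = a(t_0)$ yields $\sup_a c(a)\phi_a(t_0) \ge J(t_0)/(2\max(D_0, D_1))$, which is in fact sharper than the stated constant $2 D_0 \max(D_0, D_1)$.

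So the entire proof reduces to checking $c(a)\phi_a(t) \le J(t)$ for each $a \in [1/2, 1)$ and $t \in (0, 1)$. Fix $a$ and let $t_0 := t_0(a)$. Three elementary upper bounds on $\phi_a(t) = t(1 - t^\alpha)$ serve as workhorses: $\phi_a(t) \le t$ (since $t^\alpha \in (0, 1]$), $\phi_a(t) \le \alpha\, t \log(1/t)$ (from $1 - e^{-x} \le x$ applied to $x = \alpha\log(1/t)$), and $\phi_a(t) \le \alpha(1 - t)$ (combining the previous with $\log(1/t) \le (1-t)/t$ on $(0, 1]$). I would split $(0, 1)$ into $I_1 = (0, t_0]$, $I_2 = [t_0, 1/2]$, $I_3 = [1/2, 1 - t_0]$, and $I_4 = [1 - t_0, 1)$. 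On $I_1$, combine $\phi_a(t) \le t$ with the lower bound $J(t)/t \ge J(t_0)/(D_0 t_0)$ coming from the essentially non-increasing property of $J/J_0$ on $(0, 1/2]$. On $I_2$, combine $\phi_a(t) \le \alpha t \log(1/t)$ with $J(t) \ge t\log(1/t)\, J(t_0)/(D_1 t_0 \log(1/t_0))$ from the essentially non-decreasing property of $J/J_1$. Cases $I_3$ and $I_4$ reduce to $I_2$ and $I_1$ respectively after substituting $s := 1 - t$ and invoking the symmetry $J(t) = J(1 - t)$, with the third bound $\phi_a(t) \le \alpha(1 - t)$ handling these regions.

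The main obstacle, and the only place where the specific calibration matters, is case $I_2$: the factor $\alpha$ in the bound on $\phi_a$ must absorb the $\log(1/t_0)$ appearing in the denominator of the $J/J_1$ estimate. The choice $\alpha = (\log 2)/\log(1/t_0)$ is exactly the one that makes these two factors cancel, leaving the constraint $c(a) \le J(t_0)/(D_1\log 2\, t_0)$. This is no more restrictive than the $I_1$ constraint $c(a) \le J(t_0)/(D_0 t_0)$, and the $I_4$ constraint is milder still, being divided by $\alpha \le 1$. Hence the choice $c(a) = J(t_0)/(\max(D_0, D_1)\, t_0)$ fulfills all four cases, completing the proof.
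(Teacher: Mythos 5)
Your proof is correct, and it follows the same underlying idea as the paper's proof --- for each base point $t_0 \in (0,1/2]$ calibrate the exponent $\alpha = \tfrac1a - 1$ against $t_0$, use the essential monotonicity of $J/J_0$ on $(0,t_0]$ and of $J/J_1$ on $[t_0,1/2]$, and extend to $(1/2,1)$ by symmetry --- but you unfold the paper's intermediate step (Lemma~\ref{lem:bcr}, applied after the change of variables $\Phi(\alpha)=J(e^{-1/\alpha})/e^{-1/\alpha}$) into a direct four-interval case analysis. The one genuine difference is the calibration: the paper effectively picks $\alpha = 1/\log(1/t_0)$, i.e.\ $t_0^\alpha=1/e$, which requires $t_0\le 1/e$ and forces a separate argument for $t_0\in(1/e,1/2]$ that costs an extra factor $D_0$ in the lower bound for the supremum; your choice $\alpha=\log 2/\log(1/t_0)$, i.e.\ $t_0^\alpha = 1/2$, covers the full range $t_0\in(0,1/2]$ uniformly and sharpens the constant in the second conclusion from $2D_0\max(D_0,D_1)$ to $2\max(D_0,D_1)$. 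Your handling of $t>1/2$ via the bound $\phi_a(t)\le \alpha(1-t)$ and the intervals $I_3,I_4$ is an equivalent substitute for the paper's pointwise comparison $1-s-(1-s)^{1/a}\ge s-s^{1/a}$ on $[1/2,1)$.
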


 The proof of this proposition relies on the following statement, which is related to  \cite[Lemma 19]{bartcr06iibe}. 

\begin{lem}\label{lem:bcr}
Let $\Phi:\big(0,\frac{1}{\log 2}\big]\to \mathbb R^+$. If $\Phi$ is  essentially non-increasing with constant $C_0$, then for all $y\in(0,\frac12]$,
$$ \sup_{\alpha\in(0,1]}  \Phi(\alpha) \big(1-y^\alpha\big)\ge \frac{1}{2C_0}\,  \Phi\left( \frac{1}{\log(1/y)}\right).$$
If, in addition, $s\mapsto s\Phi(s)$ is essentially non-decreasing with constant $C_1$, then for all $y\in (0,1/2]$,
$$ \sup_{\alpha\in(0,1]}  \Phi(\alpha) \big(1-y^\alpha\big )\le \max(C_0,C_1) \,\Phi\left( \frac{1}{\log(1/y)}\right).$$

\end{lem}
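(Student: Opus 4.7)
The natural parameter to introduce is $\alpha_0 := 1/\log(1/y)$. Observe that for $y\in(0,\tfrac12]$ one has $\log(1/y)\ge \log 2$, so $\alpha_0\in (0,1/\log 2]$ lies in the domain of $\Phi$, and by construction $y^{\alpha_0}=e^{-1}$. The target value $\Phi(\alpha_0)$ in both inequalities is therefore accessible, and the strategy is to compare $\alpha$ with $\alpha_0$ and exploit the essential monotonicity hypotheses.

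For the lower bound, one selects a single test value of $\alpha$. If $\alpha_0\le 1$, plug in $\alpha=\alpha_0$ directly: then $1-y^{\alpha_0}=1-1/e\ge 1/2$, giving $\Phi(\alpha_0)(1-y^{\alpha_0})\ge \Phi(\alpha_0)/2 \ge \Phi(\alpha_0)/(2C_0)$ since $C_0\ge 1$. If instead $\alpha_0>1$, then $y>1/e$; but also $y\le 1/2$, so take $\alpha=1$ and use $1-y\ge 1/2$. Since $1\le \alpha_0$ and $\Phi$ is essentially non-increasing with constant $C_0$, we have $\Phi(1)\ge \Phi(\alpha_0)/C_0$, whence $\Phi(1)(1-y)\ge \Phi(\alpha_0)/(2C_0)$.

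For the upper bound, split the supremum according to whether $\alpha\le \alpha_0$ or $\alpha\ge \alpha_0$. In the first regime, use the elementary inequality $1-e^{-u}\le u$ (valid for $u\ge 0$) with $u=\alpha\log(1/y)=\alpha/\alpha_0$, giving
\[
\Phi(\alpha)(1-y^\alpha)\le \Phi(\alpha)\cdot \frac{\alpha}{\alpha_0} = \frac{\alpha\Phi(\alpha)}{\alpha_0}\le C_1\,\Phi(\alpha_0),
\]
where the last step uses that $s\mapsto s\Phi(s)$ is essentially non-decreasing with constant $C_1$. In the second regime, bound $1-y^\alpha\le 1$ and apply the essential monotonicity of $\Phi$: for $\alpha\ge \alpha_0$, $\Phi(\alpha)\le C_0\Phi(\alpha_0)$. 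Taking the supremum gives the claimed upper bound with constant $\max(C_0,C_1)$.

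The only subtle point is that the sup runs over $\alpha\in(0,1]$ while $\alpha_0$ may exceed $1$ (since $1/\log 2>1$); this is why the lower bound argument bifurcates, and the dichotomy in the upper bound must be handled without assuming $\alpha_0\le 1$. No other step requires more than direct verification from the hypotheses.
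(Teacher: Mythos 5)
Your proof is correct and follows essentially the same strategy as the paper's: the lower bound is obtained by testing $\alpha = 1/\log(1/y)$ when this falls in $(0,1]$ and $\alpha=1$ otherwise, and the upper bound splits the supremum at the threshold $\alpha_0 = 1/\log(1/y)$ (the paper performs the change of variable $c = \alpha\log(1/y)$, so its threshold $c=1$ is exactly your $\alpha = \alpha_0$). The elementary inequality $1-e^{-u}\le u$ and both essential-monotonicity hypotheses are used in the same places, so the two arguments are the same up to reparameterization.
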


\begin{proof} 
In order to bound the supremum from below, we just select an appropriate value for $\alpha$:
if $y\in (0,1/e]$, choosing $\alpha=1/\log(1/y)\in(0,1]$, we get  
$$ \sup_{\alpha\in(0,1]}  \Phi(\alpha) \big(1-y^\alpha\big)\ge (1-e^{-1}) \Phi\left( \frac{1}{\log(1/y)}\right).$$
If $y\in[1/e,1/2]$, we choose $\alpha=1$ and get 
$$ \sup_{\alpha\in(0,1]}  \Phi(\alpha) \big(1-y^\alpha\big)\ge (1-y) \Phi(1)\ge \frac12 \Phi(1).$$
However,  $y\ge 1/e$ ensures $1/\log(1/y)\ge 1$. Since $\Phi$ is essentially non-increasing, 
$C_0 \Phi(1)\ge \Phi(1/\log(1/y))$. Therefore we have proved the first claim, with a constant
$\min( 1-e^{-1}, 1/(2C_0))= 1/(2C_0)$.

To prove the converse inequality, we change variables as follows: setting $c=\alpha \log(1/y)$, 
$$  \sup_{\alpha\in(0,1]}  \Phi(\alpha) \big(1-y^\alpha\big) =\sup_{c\in(0,\log(1/y)]} (1-e^{-c}) 
\Phi\left( \frac{c}{\log(1/y)}\right).$$
For $c\ge 1$, we know that $ \Phi\left( \frac{c}{\log(1/y)}\right)\le C_0  \Phi\left( \frac{1}{\log(1/y)}\right)$ and we bound $1-e^{-c}$ from above by 1.
For $c\in (0,1]$, we take advantage of the hypothesis on $x\mapsto x\Phi(x)$, in the form
$ c\Phi(cx)\le C_1 \Phi(x)$ for $x>0$:
$$ (1-e^{-c}) 
\Phi\left( \frac{c}{\log(1/y)}\right) \le C_1 \frac{1-e^{-c}}{c}  \Phi\left( \frac{1}{\log(1/y)}\right)
\le C_1  \Phi\left( \frac{1}{\log(1/y)}\right).$$
These two estimates readily give the claim.

\end{proof}

\begin{proof}[Proof of Proposition \ref{prop:Jsup}]
For $\alpha\in(0,1/\log 2]$, we define
$$ \Phi(\alpha):= \frac{J\left( e^{-1/\alpha}\right)}{e^{-1/\alpha}}=\frac{J}{J_0}\left( e^{-1/\alpha}\right).$$
Since $e^{-1/\alpha}\in(0,1/2]$, our hypothesis ensures that $\Phi$ 
is essentially non-increasing with constant $D_0$. Notice that
$$ \alpha \Phi(\alpha):= \frac{J\left( e^{-1/\alpha}\right)}{e^{-1/\alpha}\log\left(\frac{1}{e^{-1/\alpha}} \right)}=\frac{J}{J_1}\left( e^{-1/\alpha}\right)$$
Hence by hypothesis, it is essentially non-decreasing with constant $D_1$. Therefore we may apply
the previous lemma to $\Phi$. Since by definition, $\Phi(1/\log(1/y))=J(y)/y$, it gives that
for all $y\in(0,1/2]$, 
$$  \frac{J(y)}{y} \ge  \frac{1}{\max(D_0,D_1)}\sup_{\alpha\in (0,1]} \Phi(\alpha)\big(1-y^\alpha\big),$$
and for all $y\in(0,1/2]$,
$$  \frac{J(y)}{y} \le  2D_0 \sup_{\alpha\in (0,1]} \Phi(\alpha)\big(1-y^\alpha\big).$$
Multiplying these inequalities by $y$, and setting $a:=1/(1+\alpha)$, the former estimate 
gives for $y\in(0,1/2]$
\begin{equation}\label{eq:lowerJ}
  J(y)\ge  \frac{1}{\max(D_0,D_1)}\sup_{\alpha\in (0,1]} \Phi(\alpha)\big(y-y^{1+\alpha}\big)
= \frac{1}{\max(D_0,D_1)}\sup_{a\in [1/2,1)} \Phi\left(\frac{1}{a}-1\right)\big(y-y^{\frac1a}\big).
\end{equation}
Hence we have prove the claimed lower bound on $J$ with $c(a)=\Phi(a^{-1}-1)/\max(D_0,D_1)$. 
We proceed in the same way with the upper bound on $J$. 
The ratio of the upper bound to the lower bound is $2 D_0 \max(D_0,D_1)$.

It remains to extend the lower bound \eqref{eq:lowerJ} to values $y\in(1/2,1)$. To do this 
we use the symmetry of $J$ and the fact that for all $a\in[1/2,1]$ and all $s\in[1/2,1)$, 
$1-s-(1-s)^{\frac1a}\ge s-s^{\frac1a}$ (this follows from the comparison of second derivatives,
observing that equality holds at   $1/2$ and 1): for $y\in(1/2,1)$,
$$ J(y)=J(1-y)\ge c(a) \big(1-y-(1-y)^{\frac1a}\big) \ge c(a) \big(y-y^{\frac1a}\big).$$
\end{proof}

\bigskip

\begin{proof}[Proof of Theorem \ref{th:dir}]
Let us denote by $D_0\ge 1$ the smallest constant such that $J/J_0$ is essentially non-increasing on $(0,1/2]$ with constant $D_0$.
Similarly let $D_1\ge 1$ be the smallest constant such that $J/J_1$ is essentially non-decreasing on $(0,1/2]$ with constant $D_1$.
 
 First, we apply Proposition \ref{prop:Jsup}. With the notation of the proposition,
it follows that for all $a\in [1/2,1)$ and all $t\in[0,1]$,
$$ I_\mu(t)\ge J(t)\ge c(a) \big( t-t^{\frac1a}\big).$$ 
Note that for $t=0$ or $t=1$ all quantities vanish. Next, Theorem~\ref{th:tens-a-fixe} tell us 
that $ I_{\mu^\infty}(t)\ge c(a) \big( t-t^{\frac1a}\big).$ This is true for all $a$ and all $t$, hence
applying the second part of Proposition \ref{prop:Jsup}, we deduce that for all $t\in[0,1/2]$,
$$  I_{\mu^\infty}(t)\ge  \sup_{a\in [1/2,1)}c(a) \big( t-t^{\frac1a}\big) \ge \frac{1}{2D_0\max(D_0,D_1)} J(t).$$
Since both $I_{\mu^\infty}$ and $J$ are symmetric with respect to $1/2$, we can conclude
that for all $t\in[0,1]$, it holds $I_{\mu^\infty}(t)\ge J(t)/(2D_0\max(D_0,D_1))$.

In the general case, we know by Lemma~\ref{lem:equiv} that $D_0\le D/\log 2$ and $D_1\le D$. Therefore we get that 
$ I_{\mu^\infty} \ge J/c_D$ with $c_D= 2D^2/(\log 2)^2$.

 In the particular case where $J$ is concave, we know that  $J(t)/t$ is non-increasing, so that $D_0=1$ and $ I_{\mu^\infty}\ge J/(2D_1)\ge J/(2D).$ The paragraph after Theorem \ref{th:ab}
 explains that when $J$ is concave, symmetric and such that $J/J_1$ is non-decreasing, the inequality $I_\mu\ge J$ implies $I_{\mu^\infty} \ge J$. Hence in this case the conclusion of  Theorem  \ref{th:dir} is valid with $c_1=1$.
\end{proof}

\bigskip

\subsection{Proof of Theorem~\ref{th:rev}}

First, we recall a classical property of infinite dimensional profiles, which comes from 
testing isoperimetric inequalities on product sets. It was put forward by Bobkov in \cite{bobk97ipue}.

\begin{lem}\label{lem:2PI}
Let $\left(X,d,\mu\right)$ be a metric space equipped with a Borel probability measure $\mu$ and satisfying the regularity property ($\mathcal R$). Then the infinite-dimensional isoperimetric profile of $\left(X,d,\mu\right)$ satisfies, for every $a,b \in \left[0,1\right]$ :
\begin{equation}\label{eq:I(ab)2}
I_{\mu^{\infty}}(ab) \leq aI_{\mu^{ \infty}}(b) + bI_{\mu^{ \infty}}(a).
\end{equation}
\end{lem}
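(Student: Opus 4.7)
The plan is to exploit the product structure of $\ell_\infty$-balls, namely that $(A\times B)_h = A_h\times B_h$ for $A\subset X^m$ and $B\subset X^n$, and to test the isoperimetric inequality in $X^{m+n}$ on near-extremal product sets supplied by Hypothesis $(\mathcal R)$.

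\medskip

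\noindent\textbf{Step 1: near-extremal factors via $(\mathcal R)$.} Fix $\varepsilon>0$. By definition of $I_{\mu^\infty}$, pick $m$ and $n$ such that $I_{\mu^m}(a)\le I_{\mu^\infty}(a)+\varepsilon$ and $I_{\mu^n}(b)\le I_{\mu^\infty}(b)+\varepsilon$. Applying $(\mathcal R)$ to $X^m$ and $X^n$ respectively, I obtain Borel sets $A\subset X^m$ with $\mu^m(A)=a$ and $B\subset X^n$ with $\mu^n(B)=b$, both satisfying $(\mu^m)^+(A)\le I_{\mu^m}(a)+\varepsilon$ and $(\mu^n)^+(B)\le I_{\mu^n}(b)+\varepsilon$, and crucially such that the Minkowski contents are genuine limits:
\begin{equation*}
\mu^m(A_h)=\mu^m(A)+h\,(\mu^m)^+(A)+o(h),\qquad \mu^n(B_h)=\mu^n(B)+h\,(\mu^n)^+(B)+o(h).
\end{equation*}

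\noindent\textbf{Step 2: differentiating the product.} Since $X^{m+n}$ is equipped with the uniform distance, a point $(x,y)$ lies within distance $h$ of $A\times B$ if and only if $x\in A_h$ and $y\in B_h$. Consequently $(A\times B)_h=A_h\times B_h$ and, using Fubini,
\begin{equation*}
\mu^{m+n}\bigl((A\times B)_h\bigr)=\mu^m(A_h)\,\mu^n(B_h).
\end{equation*}
Expanding the product of the two first-order expansions from Step 1 yields
\begin{equation*}
\mu^{m+n}\bigl((A\times B)_h\bigr)-\mu^{m+n}(A\times B)=h\bigl[b\,(\mu^m)^+(A)+a\,(\mu^n)^+(B)\bigr]+o(h).
\end{equation*}
Dividing by $h$ and passing to the $\liminf$ gives
\begin{equation*}
(\mu^{m+n})^+(A\times B)\le b\,(\mu^m)^+(A)+a\,(\mu^n)^+(B).
\end{equation*}

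\noindent\textbf{Step 3: conclusion.} Since $\mu^{m+n}(A\times B)=ab$, the defining infimum of $I_{\mu^{m+n}}(ab)$ combined with the bounds from Step 1 gives
\begin{equation*}
I_{\mu^\infty}(ab)\le I_{\mu^{m+n}}(ab)\le b\bigl(I_{\mu^\infty}(a)+2\varepsilon\bigr)+a\bigl(I_{\mu^\infty}(b)+2\varepsilon\bigr).
\end{equation*}
Letting $\varepsilon\to 0$ yields \eqref{eq:I(ab)2}.

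\medskip

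\noindent The main obstacle is the differentiation in Step 2: without $(\mathcal R)$, the Minkowski content is only a $\liminf$, and a naive product expansion would only control a $\liminf$ of $(\mu^m(A_h)\mu^n(B_h)-ab)/h$, not necessarily the Minkowski content of $A\times B$ itself. Having honest limits for both factors is precisely what makes the first-order expansion of the product rigorous and gives the clean additive bound.
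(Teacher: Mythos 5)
Your proof is correct and follows essentially the same route as the paper: test the isoperimetric infimum in $X^{m+n}$ on products $A\times B$, exploit $(A\times B)_h = A_h\times B_h$ for the $\ell_\infty$-distance, and invoke $(\mathcal R)$ to make the first-order expansion rigorous. The only minor difference is that you apply $(\mathcal R)$ to both factors, whereas the paper keeps $A$ arbitrary (then optimizes over it) and uses $(\mathcal R)$ only for $B$, since having one genuine limit in the sum is already enough to split the $\liminf$ additively.
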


\begin{proof} The inequality is obvious if $a$ or $b$ is equal to 1, or to 0 since $I_{\mu^{ \infty}}(0)=0$. Let $a,b \in (0,1)$ and $\varepsilon>0$.
Let $m,n \in \N^*$. Let $A\subset X^m$ be any set with $\mu^m(A)=a$. Let $B\subset X^n$
be any set with $\mu^n(B)=b$, $(\mu^n)^+(B)\le I_{\mu^n}(b)+\varepsilon$ and 
$$(\mu^n)^+(B)=\lim_{h\to 0}\frac{\mu^n\left(B_h\setminus B\right)}{h},$$ which is 
possible thanks to Hypothesis ($\mathcal R$).

Then consider  $A~\times~B~\subseteq~\R^{m+n}$. Obviously, $\mu^{ m+n}\left(A \times B\right) = ab$. The uniform enlargement of a product set is still a product:  $ (A\times B)_h=A_h\times B_h$ for any  $h > 0$. Therefore
\begin{eqnarray*}
 \frac1h \mu^{m+n}\left( (A\times B)_h\setminus (A\times B)\right) &=& \frac{\mu^m(A_h)\mu^n(B_h)-\mu(A)\mu(B)}{h}\\
 &=&  \frac{\mu^m(A_h\setminus A)}{h}\, \mu^n(B_h)+ \mu^m(A) \frac{\mu^n(B_h\setminus B)}{h}
\end{eqnarray*}
Since by hypothesis $\lim_{h\to 0} (\mu^n(B_h)-\mu^n(B))/h<+\infty$, we know that  $\lim_{h\to 0} \mu^n(B_h)=\mu^n(B)$ (note the convergence holds by monotonicity). Taking upper limits in $h\to 0$, and observing that two of the three terms have limits, we deduce from   the latter inequality that
\begin{equation}\label{eq:bordprod}
 (\mu^{m+n})^+(A\times B)\le (\mu^m)^+(A)\, \mu^n(B)+\mu^m(A)\,(\mu^n)^+(B).
 \end{equation}
Since $I_{\mu^{m+n}}(ab)\le (\mu^{m+n})^+(A\times B)$, we obtain after optimizing on sets
$A$ of measure $a$ and using the hypothesis on the boundary measure of $B$:
$$
 I_{\mu^{m+n}}(ab) \leq I_{\mu^m}(a) \, b + a \big(I_{\mu^n}(b)+\varepsilon\big).$$
 Letting $\varepsilon$ tend to $0$, and $m,n$ tend to $+\infty$ gives the claim \eqref{eq:I(ab)2}. 
\end{proof}

The symmetry property ($I_{\mu^{\infty}}(t)=I_{\mu^{\infty}}(1-t)$ for all $t\in0,1]$) and 
the two-points inequality \eqref{eq:I(ab)2} are enough to deduce Theorem~\ref{th:rev}, as the
next statement shows:

\smallskip

\begin{prop}\label{prop:I}
Let $I:[0,1]\to[0,+\infty]$ be an application satisfying that for all $a,b\in[0,1]$
\begin{equation} \label{eq:hyp} 
I(a)=I(1-a) \quad \mathrm{and}\quad I(ab)\le aI(b)+bI(a),
\end{equation}
with the convention that $+\infty\times 0=0$.
If there exists $x_0\in [0,1]$ such that $\limsup_{x\to x_0} I(x)<+\infty$ then $I$ is continuous and
$t\mapsto I(t)/(t\log(1/t))$ is essentially non-decreasing on $(0,1)$.
\end{prop}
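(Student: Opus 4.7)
The plan is to reduce the claim to a one-dimensional subadditive problem via the change of variable $u=-\log t$. Setting
\[
g(u):=\frac{I(e^{-u})}{e^{-u}}\qquad\text{for }u\in[0,+\infty],
\]
and dividing the two-point inequality $I(ab)\le aI(b)+bI(a)$ by $ab$ with $a=e^{-u}$, $b=e^{-v}$, one obtains the subadditivity $g(u+v)\le g(u)+g(v)$ on $[0,+\infty)$. Taking $a=b=0$ in the hypothesis (with $+\infty\cdot 0=0$) forces $I(0)=0$, so $g(0)=0$. The symmetry $I(t)=I(1-t)$ translates into the \emph{conjugate relation}
\[
g(v)=\frac{g(u)}{e^{u}-1}\qquad\text{whenever}\qquad e^{-u}+e^{-v}=1,
\]
which couples the behaviour of $g$ at large $u$ with its behaviour at small $v$.

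First I would show that $g$ is finite on $(0,+\infty)$. By the $\limsup$ hypothesis (reducing the boundary cases $x_{0}\in\{0,1\}$ to an interior point via symmetry), there is an interval $(\alpha,\beta)\subset(0,+\infty)$ on which $g\le M$ for some $M$. Subadditivity then gives $g\le nM$ on each $(n\alpha,n\beta)$; once these intervals overlap (for $n$ large enough) they cover $[N\alpha,+\infty)$ and yield a linear bound $g(u)\le(M/\alpha)\,u$ there. The conjugate relation transfers this linear control at infinity into a bound $g(v)\le C\,v\log(1/v)$ on some $(0,v_{0}]$, and a further application of subadditivity covers the intermediate range $[v_{0},N\alpha]$.

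The essentially non-decreasing property of $t\mapsto I(t)/(t\log(1/t))$ on $(0,1)$ is equivalent to $M(u):=g(u)/u$ being essentially non-increasing on $(0,+\infty)$. Subadditivity immediately gives $M(nu)\le M(u)$ for every integer $n\ge 1$. For an arbitrary $v\ge u$, writing $v=ku+r$ with $k\in\mathbb N^{*}$ and $r\in[0,u)$ produces $g(v)\le kg(u)+g(r)$, and after division by $v$, $M(v)\le M(u)+g(r)/v$. The hard step is to bound $g(r)/v$ by a universal multiple of $M(u)$. This is where the conjugate relation is essential: it converts the linear control $g\lesssim u$ at infinity into $g(r)\lesssim r\log(1/r)$ near $0$, which together with the boundedness of $g$ on any compact subset of $(0,+\infty)$ (from the previous step) closes the estimate $M(v)\le C\,M(u)$. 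This interaction between subadditivity and symmetry is the main obstacle, since neither property alone suffices: one can produce subadditive $g\ge 0$ with $g(0)=0$ for which $M$ is not essentially non-increasing, so symmetry is really the extra ingredient.

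Continuity of $I$ is then a consequence. The linear bound $g(u)=O(u)$ (valid for $u$ large, extended to any $[u_{1},+\infty)$) yields $I(t)=t\,g(-\log t)=O(t\log(1/t))\to 0$ as $t\to 0^{+}$, proving continuity at $0$, and hence at $1$ by symmetry. For an interior point $t\in(0,1)$, applying the two-point inequality with $a=t$ and $b=s/t$ (so $s=tr$ with $r\to 1^{-}$), and using $I(r)=I(1-r)\to 0$ (now established), gives $\limsup_{s\to t^{-}}I(s)\le I(t)$; combined with $I(s)\le s I(r)+r I(s/r)\cdot\ldots$ reversed, one also gets $\liminf_{s\to t^{-}}I(s)\ge I(t)$. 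The analogous decomposition $1-s=(1-t)(1-r')$ with $r'\to 0^{+}$ handles the right limit, and the proof is complete.
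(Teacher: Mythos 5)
Your change of variable $g(u)=e^{u}I(e^{-u})$ is exactly the paper's $K$, and the overall plan (subadditivity of $g$, conjugate relation from symmetry, Euclidean division $v=ku+r$ to compare $M(v)=g(v)/v$ with $M(u)$) mirrors the paper's strategy. The continuity argument, while imprecisely worded (the ``reversed'' step and the handling of the right limit would need to be spelled out), can be made to match the paper's clean argument via one-sided semicontinuity of $K$ combined with symmetry.

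However, there is a genuine gap at what you yourself call ``the hard step.'' To bound $g(r)/v$ by a multiple of $M(u)=g(u)/u$, you need a \emph{lower} bound on $g$, not only the upper bounds that ``boundedness of $g$ on compacts'' and ``$g(r)\lesssim r\log(1/r)$ near~$0$'' supply. Concretely, since $r<u\le v$, the required estimate $g(r)/v\le(C-1)g(u)/u$ reduces to $g(r)\le C'g(u)$, i.e.\ to $g$ being essentially non-decreasing, which is essentially what has to be proved — nothing you have established so far rules out that $g(u)$ is tiny (or even zero) at some $u>0$ while $g(r)$ is not. This is where the paper invests real work: (a) it shows via a density argument (Lemma~\ref{lem:set}, applied to the zero set $S_{0}=\{x\in(0,1):I(x)=0\}$, which is stable under $x\mapsto xy$ and $x\mapsto 1-x$) that either $I\equiv 0$ or $I>0$ on all of $(0,1)$; (b) it invokes the subadditive-limit result (Lemma~\ref{lem:subadd}) to get $\lim_{h\to 0^{+}}K(h)/h=\sup_{x>0}K(x)/x=L\in(0,+\infty]$, hence $\lim_{t\to 0^{+}}I(t)/t=L>0$, and together with continuity this yields $I(t)\ge ct$ on $(0,1/2]$. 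Only with this lower bound in hand can one first prove $I$ essentially non-decreasing on $[0,1/2]$, then (via symmetry, Lemma~\ref{lem:equiv}) that $I/J_{0}$ is essentially non-increasing, and finally control the remainder term $I(t^{\alpha})/t^{\alpha}$ — which is the $g(r)/v$ term in your notation. Your proposal does not address either (a) or (b), so the estimate $M(v)\le CM(u)$ is not actually closed; the symmetry alone gives you the upper bound $g\lesssim r\log(1/r)$ near $0$, which points in the wrong direction for this step.
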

The condition of local boundedness around some point cannot be removed as shown by the following example:
$I(t)=0$ if $t\in \mathbb Q$ and $I(t)=+\infty$ otherwise.

\medskip
The proof of the proposition uses the next two easy lemmas.

\begin{lem}\label{lem:set}
 Let $S\subset (0,1)$ be a set with the following stability property:
 $$ \big(x\in S \quad \mathrm{and} \quad y\in S\big) \Longrightarrow \big(1-x \in S \quad\mathrm{and}\quad  xy\in S\big).$$
 If $S$ is not empty then it is dense in $(0,1)$. Moreover, if $S$ has non-empty  interior
 then $S=(0,1)$.
\end{lem}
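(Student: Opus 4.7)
The plan is to attack the density claim and the non-empty-interior claim separately, exploiting in both cases that stability under the complement $x\mapsto 1-x$ combined with stability under multiplication produces a rich supply of iterates lying in $S$.

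For the density statement, I would first fix any $x_0\in S$ and observe that $x_0^n\in S$ tends to $0$, so by complementation $1-x_0^n\in S$ tends to $1$. Hence $S$ contains elements arbitrarily close to $1$. Now given $\varepsilon>0$ and $t\in(0,1)$, pick $x\in S$ with $x>1-\varepsilon$. The powers $x^n\in S$ form a decreasing sequence from $1$ to $0$ with additive gaps $x^n-x^{n+1}=x^n(1-x)\le\varepsilon$, so every $t\in(0,1)$ lies within $\varepsilon$ of some $x^n\in S$. Letting $\varepsilon\to 0$ yields density.

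For the second statement, set $T:=\mathrm{Int}(S)$ (interior in $\mathbb R$), which by assumption is a non-empty open subset of $(0,1)$. I would first check that $T$ is itself stable: since $x\mapsto 1-x$ is a homeomorphism and $1-S=S$, one has $1-T=T$; for products, if $x,y\in T$, open neighborhoods $U\ni x$ and $V\ni y$ lie in $S$, and $UV\subset S$ is an open set containing $xy$, so $xy\in T$. The key step is then to upgrade $T\supset(\alpha,\beta)$ for some open interval $(\alpha,\beta)\subset(0,1)$ to the existence of a right tail $(c,1)\subset T$ with $c<1$. For this I would iterate both operations: $(\alpha^n,\beta^n)\subset T$ by multiplicativity, and by complementation $(1-\beta^n,1-\alpha^n)\subset T$. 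These latter intervals drift to the right and consecutive ones overlap as soon as $(\alpha/\beta)^n<\beta$, so their union from some $n_0$ onward contains $(1-\beta^{n_0},1)$.

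Once $(c,1)\subset T$, iterated multiplication gives $(c,1)^n=(c^n,1)\subset T$, and since $c^n\to 0$ these right tails exhaust $(0,1)$, forcing $T=(0,1)$ and hence $S=(0,1)$. The main obstacle, as I see it, is precisely the passage from ``$T$ contains an open interval'' to ``$T$ contains a full right tail'', where both operations must cooperate: powers push an initial interval toward $0$, and complementation reflects these toward $1$, where consecutive reflected intervals eventually overlap. Once this is established, the concluding iterated-product step is essentially automatic.
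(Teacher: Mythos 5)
Your proof is correct. The density argument is essentially the paper's: both produce points $1-x_0^n\in S$ accumulating at $1$ and then use the powers of such a point, whose consecutive gaps are small, to hit any prescribed region (the paper phrases the gap control multiplicatively via a maximal exponent, you phrase it additively via $x^n-x^{n+1}\le 1-x$; the content is the same). For the second assertion, however, you take a genuinely different route. The paper, having an interval $(a,b)\subset S$ and an arbitrary $x\in(0,a]$, reuses the density of $S$ to pick a multiplier $y\in S\cap(x/b,x/a)$ and observes that $y\cdot(a,b)=(ya,yb)\subset S$ contains $x$, then handles $x\in[b,1)$ by the symmetry $t\mapsto 1-t$; it never needs to discuss the interior as a set. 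You instead show that the interior $T$ of $S$ inherits both stabilities, push an interval $(\alpha,\beta)\subset T$ toward $0$ by taking powers, reflect these to intervals drifting toward $1$ which eventually overlap and yield a tail $(c,1)\subset T$, and finally exhaust $(0,1)$ by powers of that tail. Your argument is self-contained for the second claim (it does not invoke the density statement) and yields the slightly stronger structural remark that the interior of a stable set is stable, at the cost of the overlap bookkeeping $(\alpha/\beta)^n<\beta$; the paper's argument is shorter because it recycles density, but needs the small case distinction according to the position of $x$ relative to $(a,b)$. All the steps you sketch (stability of $T$ under products since $UV$ is open and contained in $S$, the identities $(\alpha,\beta)^n=(\alpha^n,\beta^n)$ and $(c,1)^n=(c^n,1)$, and the overlap criterion) check out.
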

In other words, if $S$ is neither $\emptyset$ nor $(0,1)$ then $S$ and $(0,1)\setminus S$ are dense in $(0,1)$.
This is the case for instance of $S=\mathbb Q\cap (0,1)$.

\begin{proof}
Let $t\in(0,1)$ be an element of $S$, then for all $n\in \mathbb N^*$, $x_n:=1-t^n$ belong to $S$ and the sequence $(x_n)$ tends to 1.
Given $0<a<b<1$, let us show that there is a point of $S$ between $a$ and $b$. Choose $k$ large enough such that $x_k> \max(b, a/b)$.
Then for all $n\ge 1$, $(x_k)^n\in S$. Obviously $x_k\ge b$ and $\lim_n (x_k)^n=0$. Let $n_0$ be maximal with $(x_k)^{n_0}\ge b$.
Then $$ b>x_k^{n_0+1} = x_k^{n_0} x_k > b \times \frac{a}{b}=a.$$
Hence $x_k^{n_0+1}\in S\cap (a,b)$. This completes the proof of the density of $S$.

Assume now that $(a,b)\subset S$ for some $0<a<b<1$. Consider an arbitrary $x\in (0,1)$ and let us show that $x\in S$.
If $x\in (a,b)$, we have nothing to prove. If $x\in (0,a]$, we use the fact that $S$ being non-empty is dense: there exists
$y\in S\cap (x/b, x/a)$. Since $S$ contains $y$ and $(a,b)$, the stability by product ensures that $S$ also contains $(ya,yb)$.
Hence $x\in(ya,yb)\subset S$.
Eventually, if $x\in [b,1)$, we consider $1-x\in (0,1-b]$. By the symmetry assumption $(1-b,1-a)\subset S$, so the latter argument
yields $1-x\in S$, and using symmetry again $x\in S$.
\end{proof}

\noindent The next lemma is a classical result about subadditive functions on $\R_+$ (see e.g. \cite{kuc09itfei}) :

\begin{lem}\label{lem:subadd}
Let $K : \R_+ \rightarrow \R$ be a subadditive function with $\lim_0 K=0$. Then
\begin{equation*}
\lim_{h \to 0^+}\frac{K(h)}{h} = \sup_{t > 0}\frac{K(t)}{t} \cdot
\end{equation*}
\end{lem}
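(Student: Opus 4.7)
The plan is to establish the identity by proving two matching inequalities between $\liminf_{h\to 0^+} K(h)/h$, $\limsup_{h\to 0^+} K(h)/h$ and $M := \sup_{t>0} K(t)/t \in \R\cup\{+\infty\}$.

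One direction is immediate: by the very definition of $M$, every $h>0$ satisfies $K(h)/h\le M$, so $\limsup_{h\to 0^+} K(h)/h \le M$. The whole content of the statement is the opposite inequality $\liminf_{h\to 0^+} K(h)/h \ge M$, which I will obtain by showing $\liminf_{h\to 0^+} K(h)/h \ge K(t_0)/t_0$ for an arbitrarily fixed $t_0>0$ and then taking the supremum over $t_0$.

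To prove this latter bound, I would fix $t_0>0$ and, for each $h\in(0,t_0)$, perform Euclidean division $t_0 = n h + r$ with $n = \lfloor t_0/h\rfloor\ge 1$ and $r=t_0-nh\in[0,h)$. Iterating subadditivity $n$ times yields $K(nh)\le nK(h)$, and one further application gives
$$K(t_0)=K(nh+r)\le nK(h)+K(r),$$
which rearranges to
$$\frac{K(h)}{h}\ge\frac{K(t_0)-K(r)}{nh}.$$
Now as $h\to 0^+$, we have $r<h\to 0$ so that $K(r)\to 0$ by the hypothesis $\lim_0 K=0$, while $nh\to t_0$ since $|nh-t_0|=r<h$. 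Passing to the liminf in the inequality above therefore gives $\liminf_{h\to 0^+}K(h)/h \ge K(t_0)/t_0$, as required.

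The argument is short and essentially without obstacle; the only things to watch for are that the division in the key step is by the positive quantity $nh$ (so no sign issue arises even if $K$ takes negative values), and that the formulation in terms of individual ratios $K(t_0)/t_0$ handles uniformly both the finite case and the case $M=+\infty$ (where one simply obtains arbitrarily large lower bounds on the liminf).
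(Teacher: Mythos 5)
Your proof is correct and follows essentially the same route as the paper: the same Euclidean division $t_0=nh+r$, iterated subadditivity $K(t_0)\le nK(h)+K(r)$, and the limits $r\to 0$, $K(r)\to 0$, $nh\to t_0$ to get $\liminf_{h\to 0^+}K(h)/h\ge K(t_0)/t_0$, with the trivial bound $\limsup\le\sup$ for the other direction. The only cosmetic difference is that you bound the liminf below by $K(t_0)/t_0$ for each fixed $t_0$ and then take the supremum, whereas the paper fixes an arbitrary $u$ below the supremum and picks $x_0$ with $K(x_0)>ux_0$; both handle the case $\sup=+\infty$ equally well.
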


\begin{proof}
Denote $S =\underset{t > 0}{\sup} \frac{K(t)}{t}$. Given any $u < S$, there exists $x_0 \in \R^*_+$ such that $K\left(x_0\right) > ux_0$. For any $h \in \left(0,x_0\right)$,
write  $x_0 = nh + \delta$ with  $n = \lfloor\frac{x_0}{h}\rfloor$ and $\delta\in[0,h)$.
By subadditivity of $K$,
\begin{equation*}
u x_0<K(x_0)\le n K(h)+K(\delta)= (x_0-\delta)  \frac{K(h)}{h} + K(\delta) .
\end{equation*}
Next, we  let  $h$ tend to $0^+$. In this case  $\delta \rightarrow 0^+$ and $K(\delta) \rightarrow 0$,
hence   (for any $u<S$)
\begin{equation*}
u \leq \liminf_{h \rightarrow 0^+} \frac{K(h)}{h} \cdot 
\end{equation*}
Therefore $S\le \liminf_{h \rightarrow 0^+}\frac{K(h)}{h}$.
On the other hand,  $\limsup_{h \rightarrow 0^+}\frac{K(h)}{h} \leq S$ holds by definition.
\end{proof}

\medskip

\begin{proof}[Proof of Proposition \ref{prop:I}]
There is nothing to prove if $I$ is identically 0, so we assume that $I$ does not vanish everywhere.
Observe that the two-points inequality in \eqref{eq:hyp}, applied for $a=b=0$, yields $I(0)=0$. 

\smallskip

Consider the subset $S_1$ of $(0,1)$ of points $x$ such that the function $I$ is bounded on a 
neighbourhood of $x$.  Our hypothesis $\limsup_{x\to x_0} I(x)<+\infty$ ensures that $S_1$
has non-empty interior. Thanks to \eqref{eq:hyp}, one readily checks that $S_1$ is stable by product and by symmetry with respect to $1/2$. Hence Lemma~\ref{lem:set} applies to $S_1$
and shows that $S_1=(0,1)$. This means that $I$ is locally bounded at every point of $(0,1)$.
By compactness, we deduce that $I$ is bounded on any segment $[a,b]\subset (0,1)$.

\smallskip

The next step of the proof is an argument of Bobkov and Houdr\'e, that we include for completeness. The two-points inequality implies by induction that for all $a\in [0,1]$ and 
any integer $k\ge 1$, $I(a^k)\le ka^{k-1} I(a)$. Let $t\in (0,1/e]$. Choosing $k=\lfloor \log(1/t)\rfloor\ge 1$ and $a=t^{1/k}$ in the latter  inequality leads to 
$$ I(t)\le k t^{1-\frac1k} I\big(t^{\frac1k}\big) = t \big\lfloor \log(1/t)\big\rfloor \frac{I\big(t^{1/k}\big)}{t^{1/k}}\cdot$$
Using that for $x\ge 1$, $x/\lfloor x \rfloor\in [1,2]$, we obtain that $t^{1/k}=\exp(-\log(1/t)/\lfloor \log(1/t)\rfloor)\in [e^{-2},e^{-1}]$. Hence for $t\in (0,e^{-1}]$,  $I(t)\le C t \log (1/t)$ 
where $C=e^2 \sup\{ I(s);\, s\in [e^{-2}, e^{-1}]\}$ is finite (by the previous point). In particular, this
estimates implies that $I(t)$ tends to 0 when $t\neq 0$ tends to $0$. Since $I(0)=0$, the function
$I$ is continuous at 0. By symmetry it is also continuous at $1$, with  $I(1)=0$.

\smallskip

 Consider  the map $K: \mathbb R^+\to \mathbb R^+$ defined by
$
K(x)=e^xI\left(e^{-x}\right).$  Then for all $x,y \in \R_+$, by \eqref{eq:hyp}
$$K(x+y) = \frac{I\left(e^{-x}e^{-y}\right)}{e^{-x}e^{-y}} \leq \frac{e^{-x}I\left(e^{-y}\right)+e^{-x}I\left(e^{-y}\right)}{e^{-x}e^{-y}} =K(y)+K(x),$$ which means that $K$ is subadditive.
Moreover, since $I$ is continuous at 1, $K$ is continuous at 0, with $K(0)=I(1)=0$.

For all $a > \epsilon > 0$, we have by subadditivity $K(a+\epsilon) \leq K(a) + K(\epsilon)$ and
 $K(a)\le  K(a-\epsilon) + K(\epsilon)$. Letting $\epsilon$ tend to zero, we obtain for all $a>0$
\begin{equation*}
\limsup_{x \rightarrow a^+} K(x) \leq K(a) \leq \liminf_{x \rightarrow a^-} K(x).
\end{equation*}
In words, on $(0,+\infty)$, the function $K$ is right-upper-semicontinuous and left-lower-semicontinuous. Since for all $t\in (0,1)$, $I(t)=t\, K(\log(1/t))$, if follows that on $(0,1)$
the function $I$ is left-upper-semicontinuous and right-lower-semicontinuous. Note that 
"left" and "right" were exchanged, since $t\mapsto \log(1/t)$ is continuous decreasing.
The symmetry assumption $I(t)=I(1-t)$ allows to exchange once more: so $I$ is also 
 right-upper-semicontinuous and left-lower-semicontinuous on $(0,1)$. Thus $I$ 
 is continuous on $(0,1)$, and actually on $[0,1]$. Indeed, the continuity at the endpoints has already
 been established.

\smallskip 

Next, let us draw another consequence of the above properties of $K$. Lemma~\ref{lem:subadd}
directly applies and gives that 
$$ \lim_{h\to 0^+} \frac{K(h)}{h}=\sup_{x>0} \frac{K(x)}{x}=\sup_{x>0} \frac{e^xI(e^{-x})}{x}.$$
Since we assume that $I$ is not identically 0, the above limit, denoted by $L$, belongs to  $(0,+\infty]$. We now translate this convergence in terms of $I$: using symmetry, for $t\in(0,1)$,
$$\frac{I(t)}{t}=\frac{I(1-t)}{t}=\frac{1-t}{t}K\left(\log\left(\frac{1}{1-t}\right) \right)
=\frac{(1-t)\log\left(\frac{1}{1-t}\right)}{t} \times   \frac{K\left(\log\left(\frac{1}{1-t}\right) \right)}{\log\left(\frac{1}{1-t}\right)} .$$
  When $t>0$ tends to $0$, the first ratio tends to 1, and the second to $L=\lim_{h\to 0^+} K(h)/h$. Therefore we can deduce that $\lim_{t\to 0^+} I(t)/t=L\in (0,+\infty]$.

\smallskip
  In order to turn this limit into a lower bound on $I(t)/t$ for $t\in (0,1/2]$, we need to check
 that $I$ does not vanish in $(0,1)$. To do this, let us consider the set $S_0=\{x\in (0,1); \; I(x)=0\}$. By \eqref{eq:hyp}, it is stable by product and symmetry around $1/2$. If it were non-empty,
 the first part of Lemma~\ref{lem:set}
would imply that $S_0$ is dense in $(0,1)$. By continuity of $I$, we would conclude that $I$
is identically 0. Since, we assumed that $I$ does not vanish everywhere, it follows that $S_0=\emptyset$.  As a conclusion, the function $I$ vanishes only at $0$ and $1$.
 
  On $(0,1/2]$ the map $t\mapsto I(t)/t$ is continuous, with  positive values. Moreover
  it has a positive (maybe infinite) limit at $0^+$. As a consequence, there exists $c>0$ such 
  that $I(t)\ge c t$ for all $t\le 1/2$.
  
\smallskip Let us deduce  that $I$ is essentially non-decreasing on $[0,1/2]$. Let $0\le s<t\le \frac12$. Using the two-points inequality \eqref{eq:hyp}
$$I(s)=I\left(t \times \frac{s}{t}\right)\le \frac{s}{t} I(t)+ t I\left(\frac{s}{t}\right)\le I(t)+ t \max I
\le  \left(1+\frac{\max I}{c}\right) \, I(t),$$
where we have used that $I$ is continuous on $[0,1]$ and $I(t)\ge c t$.

Eventually, let us prove that $\frac{I}{J_1}$ is essentially non-decreasing on $\left(0,1\right)$.
Let $0 < s <t < 1$. Then one can write  $s = t^{k+\alpha}$ with $k=\left\lfloor\frac{\log\frac1s}{\log\frac1t}\right\rfloor \in \mathbb N^*$ and $\alpha \in \left[0,1\right)$.
By the two-points inequality for $I$:
\begin{equation}\label{eq:Ist}
\frac{I(s)}{s} = \frac{I\left(t^{k+\alpha}\right)}{t^{k+\alpha}}  \leq k\frac{I(t)}t + \frac{I\left(t^{\alpha}\right)}{t^{\alpha}}.
\end{equation}

Assume first that $t\ge \frac12$. Then $t^{\alpha} \geq t \geq \frac12$. We have shown that 
$I$ is essentially non-decreasing on $[0,\frac12]$ (with a constant denoted by $D$).
By symmetry, it follows that $I$ is essentially non-increasing on $[\frac12,1]$ with constant $D$.
Hence
$$  \frac{I\left(t^{\alpha}\right)}{t^{\alpha}} \le D  \frac{I\left(t\right)}{t^{\alpha}} \le D
 \frac{I\left(t\right)}{t}.$$
 Combining this estimate with \eqref{eq:Ist} gives
 $$ \frac{I(s)}{s} \le ( k+D)\frac{I(t)}t \le (1+D) k\frac{I(t)}t \le (1+D) \frac{\log\frac1s}{\log\frac1t}\frac{I(t)}t,$$
 that is $\frac{I(s)}{J_1(s)}\le (1+D) \frac{I(t)}{J_1(t)}$. In particular, we have shown that 
 $I/J_1$ is essentially non-decreasing on $[1/2, 1]$. Using the symmetry of $I$, this implies
 that on $[0,1/2]$ the function $I(t)/J_0(t)=I(t)/t$ is essentially non-increasing. This is actually
 explained in the first part of the proof of Lemma~\ref{lem:equiv}, see Equation \eqref{eq:pourlafin}. We have already shown that $I$ is essentially non-decreasing on $(0,1/2]$.
 Thus by symmetry, $I$ is essentially non-increasing on $[1/2,1]$, and so is the map $t\mapsto I(t)/t=I(t)/J_0(t)$.
Therefore,  $I/J_0$ is essentially non-increasing on the whole interval $(0,1]$. Let us denote by $D_0$
the corresponding constant.

The latter fact allows to conclude: Let $0 < s <t < 1$. Since  $t^\alpha \ge t$, we know that
$ \frac{I(t^\alpha)}{t^\alpha} \le D_0  \frac{I(t)}t$. Combining this estimate with \eqref{eq:Ist} 
gives, 
$$
 \frac{I(s)}{s}  \leq k\frac{I(t)}t + \frac{I\left(t^{\alpha}\right)}{t^{\alpha}} \le (k+D_0) \frac{I(t)}t 
  \le (1+D_0) k\frac{I(t)}t  \le  (1+D_0) \frac{\log\frac1s}{\log\frac1t} \frac{I(t)}t  .$$ The proof is now complete.

\end{proof}

\section{An application to geometric influences}

This section is devoted to an application of Theorem \ref{th:dir} to geometric influences. The notion of influence of a variable on a boolean function plays an important role in discrete harmonic analysis, with applications to various fields (see e.g. the survey article \cite{kalsaf06} on threshold phenomena). Let us recall the definition: for a function $f:\{0,1\}^n\to \{0,1\}$, which can be viewed as a subset $A=\{x;\; f(x)=1\}$ of $\{0,1\}^n$, the influence of the $i$-th variable  with respect to a probability measure $\nu$ on the discrete cube $\left\{0,1\right\}^n$ is 
\begin{equation*}
I_i(f)=I_i(A) := \mathds{P}_{x\sim \nu}\big(f(x)\neq f(\tau_i(x))\big) =\mathds{P}_{x\sim \nu}\big(x\in A \mbox{ xor }\tau_i(x) \in A\big),
\end{equation*}
where $\tau_i(x)$ is the neighbour of $x$ having different $i$-th coordinate, $\left(\tau_i(x)\right)_i = 1-x_i$. Geometrically speaking, $I_i(A)$  measures the size  of the edge boundary of $A$ in the $i$-th direction.
  A seminal result in the theory of influences is the KKL theorem (by Kahn, Kalai and Linial  \cite{KKL88}). Based on the hypercontractivity inequality, it ensures the 
 existence of a coordinate with a large influence for non-constant boolean functions.

 \medskip

 Recent papers by Keller \cite{kell11ivbf} and Keller, Mossel and Sen \cite{kellms12gi} develop the theory of influences in the case of a continuous space. They propose two different definitions: $h$-influences \cite{kell11ivbf} involve the measures of the intersections of a given set with all lines in the $i$-th canonical direction, while  geometric influences \cite{kellms12gi} involve  the boundary measures of the intersections with lines in the $i$-th direction.

\begin{defi}
Let $n \in \N^*$, $i \in \left\{1,...,n\right\}$, $x \in \R^n$ and $A$ a Borel subset of $\R^n$.
  For  $z \in \R^{n-1}$, we set  $$A^z_i = \left\{y \in \R \,\Big| \, \left(z_1,...,z_{i-1},y,z_i,...,z_{n-1}\right) \in A\right\}. $$ 
   Let  $\nu = \nu_1 \otimes ... \otimes \nu_n$ be a product probability measure on $\R^n$. 
  
If $h : \left[0,1\right] \rightarrow \R_+$ is a measurable function, the $h$-influence of the $i$-th coordinate on $A$ with respect to $\nu$ is defined by
\begin{equation*}
\mathcal{I}^h_{\nu,i}(A) = \int_{\R^{n-1}}h\left(\nu_i\left(A^z_i\right)\right)d\widehat{\nu}^i(z),
\end{equation*}
where $\widehat{\nu}^i = \nu_1\otimes...\otimes\nu_{i-1}\otimes\nu_{i+1}\otimes...\otimes\nu_n$.

The geometric influence of the $i$-th coordinate on $A$ with respect to the measure $\nu$ is given by
\begin{equation*}
\mathcal{I}^{\mathcal{G}}_{\nu,i}(A) = \int_{\R^{n-1}}\left(\nu_i\right)^+\left(A^z_i\right)d\widehat{\nu}^i(z).
\end{equation*}
When the choice of the underlying measure is obvious, we simply write  $\mathcal{I}^h_i(A)$ and
$\mathcal{I}^{\mathcal{G}}_i(A)$.
\end{defi}

Keller was able to prove an analogue of the KKL theorem for $h$-influences provided $h$ 
is larger than the entropy function  $\mathrm{Ent}$  defined by  $\mathrm{Ent}(x) = x\log\frac1x + (1-x)\log\frac1{1-x}$ for $x\in(0,1)$ and  $\mathrm{Ent}(0)=\mathrm{Ent}(1)=0$. His result \cite{kell11ivbf} is stated
for  functions on the unit cube, equipped with Lebesgue's measure. 
Using a standard transportation argument yields the following formulation:
\begin{theo}\label{th:kell}
Let $\mu$ be a probability measure on $\R$. Then, for every Borel set $A \subseteq \R^n$
\begin{equation*}
\max_{1\le i\le n}\mathcal{I}^{\mathrm{Ent}}_i(A) \geq \gamma \, \mu^n(A)\mu^n\left(A^c\right)\frac{\log n}n,
\end{equation*}
where $\gamma > 0$ is a universal constant.
\end{theo}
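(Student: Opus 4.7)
The plan is to reduce the statement to Keller's theorem on $[0,1]^n$ equipped with Lebesgue measure by means of the one-dimensional quantile transport. Let $T:[0,1]\to \R$ be the quantile function $T(u)=\inf\{x\in\R\,:\,\mu((-\infty,x])\ge u\}$; since $T$ is non-decreasing and Borel, $T_\ast \mathcal L_{|[0,1]}=\mu$. Applying $T$ coordinatewise, $T^{\otimes n}:[0,1]^n\to \R^n$ pushes Lebesgue's measure $\mathcal L^n$ to $\mu^n$. Given a Borel set $A\subseteq \R^n$, define $B:=(T^{\otimes n})^{-1}(A)\subseteq [0,1]^n$. Then $\mathcal L^n(B)=\mu^n(A)$ and $\mathcal L^n(B^c)=\mu^n(A^c)$.

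The key point is the compatibility of the slice-based definition of $h$-influences with the product transport. For each index $i$ and each $w\in[0,1]^{n-1}$, the slice $B_i^w\subseteq [0,1]$ satisfies $T(B_i^w)\overset{\mu\text{-a.e.}}{=} A_i^{T^{\otimes(n-1)}(w)}$, so that $\mathcal L(B_i^w)=\mu(A_i^{T^{\otimes(n-1)}(w)})$. Integrating with respect to $\widehat{\mathcal L}^i$ and using that $(T^{\otimes(n-1)})_\ast \widehat{\mathcal L}^i=\widehat{\mu}^i$ yields
\begin{equation*}
\mathcal I^{\mathrm{Ent}}_{\mu,i}(A)
=\int_{\R^{n-1}}\mathrm{Ent}\bigl(\mu(A_i^z)\bigr)d\widehat{\mu}^i(z)
=\int_{[0,1]^{n-1}}\mathrm{Ent}\bigl(\mathcal L(B_i^w)\bigr)d\widehat{\mathcal L}^i(w)
=\mathcal I^{\mathrm{Ent}}_{\mathcal L,i}(B).
\end{equation*}

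Now apply Keller's result from \cite{kell11ivbf}, which asserts the stated lower bound in the special case of Lebesgue measure on the unit cube: there exists a universal $\gamma>0$ such that
\begin{equation*}
\max_{1\le i\le n}\mathcal I^{\mathrm{Ent}}_{\mathcal L,i}(B)\ge \gamma\, \mathcal L^n(B)\,\mathcal L^n(B^c)\,\frac{\log n}{n}.
\end{equation*}
Substituting the two identities $\mathcal I^{\mathrm{Ent}}_{\mathcal L,i}(B)=\mathcal I^{\mathrm{Ent}}_{\mu,i}(A)$ and $\mathcal L^n(B)\mathcal L^n(B^c)=\mu^n(A)\mu^n(A^c)$ gives the desired conclusion.

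The only genuine care is that the coordinatewise transport behaves correctly on slices when $\mu$ has atoms or gaps in its support (so that $T$ is not a homeomorphism). Monotonicity of $T$ makes this routine: preimages of Borel sets are Borel, and the equality $T(B_i^w)=A_i^{T^{\otimes(n-1)}(w)}$ is an identity of preimages modulo $\mu$-null sets, which does not affect the integrals defining the influences. Everything else is a direct translation via the transport.
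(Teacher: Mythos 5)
Your argument is exactly the ``standard transportation argument'' the paper invokes without detail: push Lebesgue measure on the cube onto $\mu^n$ by the coordinatewise quantile map, note that slices of the preimage set are preimages of slices so the $\mathrm{Ent}$-influences are preserved, and apply Keller's cube result. It is correct as written (in fact the slice relation $B_i^w=T^{-1}\bigl(A_i^{T^{\otimes(n-1)}(w)}\bigr)$ is an exact identity of preimages, so the null-set caveat at the end is not even needed), and it coincides with the paper's intended proof.
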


Keller, Mossel and Sen \cite{kellms12gi} establish an analogue of the KKL theorem for geometric
influences for Boltzmann measures  $d\mu_\rho(t)=\exp(-|t|^\rho) dt/Z_\rho\,  dt$ with $\rho\ge 1$
(and under mild assumptions for log-concave measures enjoying the same isoperimetric inequality
as $\mu_\rho$). Thanks to Theorem~\ref{th:dir} we can propose a more general result:

\begin{theo}\label{th:GI}
Let $\mu$ be an even log-concave probability measure on $\R$, with positive and $C^1$-bounded   density $\varphi_{\mu}$. 
Assume that  $I_\mu\ge J$ where $J$ is a non-negative function on $[0,1]$, which is symmetric with respect to $1/2$, verifies $J(0)=0$
and $t\mapsto J(t)/(t\log(1/t))$ is essentially non-decreasing on $(0,1)$ with constant $D$.
Then for every Borel set $A\subset \R^n$, 
   $$\max_{1\le i\le n} \mathcal I_i^{\mathcal G}(A)\ge \alpha_D \mu^n(A)\mu^n(A^c) J\left(\frac{1}{n}\right),$$
  where $\alpha_D \geq \frac{\kappa}{D^3}$ and  $\kappa > 0$ is a  universal constant.
\end{theo}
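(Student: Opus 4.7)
The proof combines three ingredients: the dimension-free isoperimetric inequality of Theorem~\ref{th:dir}, the decomposition of the product-space boundary measure into geometric influences via Hypothesis~$(\mathcal H)$, and a KKL-type semigroup argument to upgrade an ``average'' bound to a ``maximum'' bound.

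First, since $\mu$ has a positive $C^1$ density, Hypothesis~$(\mathcal H)$ is satisfied by each product $\mu^n$, and Theorem~\ref{th:dir} provides a dimension-free isoperimetric inequality $I_{\mu^n}\ge J/c_D$ with $c_D\le 5D^2$. Next, Hypothesis~$(\mathcal H)$ permits the decomposition of the $\ell_\infty$-boundary measure along coordinates: approximating $\mathbf 1_A$ by locally Lipschitz functions, using the coarea inequality and the identity $|\nabla f|=\sum_i|\nabla_i f|$, one obtains
\begin{equation*}
(\mu^n)^+(A)\;\le\;\sum_{i=1}^n\mathcal I^{\mathcal G}_i(A).
\end{equation*}
Together with Step~1 this yields the crude bound $\max_i\mathcal I^{\mathcal G}_i(A)\ge J(\mu^n(A))/(n c_D)$, which already contains a factor $1/n$ but not yet the correct scale $J(1/n)$.

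Second, to close the logarithmic-type gap between $J(\mu^n(A))/n$ and $J(1/n)$, I would follow the strategy of Keller--Mossel--Sen~\cite{kellms12gi}. The key tool is the Langevin semigroup $(P_t)$ associated with $\mu$ on $\R$, acting coordinatewise on $L^2(\mu^n)$. Since $\mu$ is log-concave with $C^1$ density, this semigroup enjoys Bakry--\'Emery-type commutation and hypercontractivity estimates that are uniform in $n$. The role of $P_t$ is to smooth $\mathbf 1_A$ into a regular function $f_t=P_t\mathbf 1_A$ whose coordinate gradients are controlled by the geometric influences of $A$: for each $i$, $\int|\nabla_i f_t|\,d\mu^n\le C(t)\mathcal I^{\mathcal G}_i(A)$ with $C(t)$ integrable near $0$. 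Applying the functional form of the isoperimetric inequality (Lemma~\ref{lem:func}) to $f_t$, and exploiting the essential monotonicity of $J/J_1$ to transfer the bound from scale $\mu^n(A)$ to scale $1/n$, yields after optimization in $t$ an estimate
\begin{equation*}
\mu^n(A)\,(1-\mu^n(A))\;\le\;C_D\,\max_i\mathcal I^{\mathcal G}_i(A)\cdot\frac{n}{J(1/n)},
\end{equation*}
which is the desired bound. The factor $D^3$ in $\alpha_D\ge\kappa/D^3$ decomposes naturally as $D^2$ from the isoperimetric inequality of Step~1 and an additional $D$ from the monotonicity step.

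The main obstacle is the semigroup estimate in the second paragraph: controlling how $L^1$-gradients, and hence geometric influences, propagate under $(P_t)$ for a general log-concave $\mu$, rather than only for the Gaussian or Boltzmann cases treated in~\cite{kellms12gi}. This is where the log-concavity of $\mu$ and the $C^1$-boundedness of its density are decisive, providing the commutation property $|\nabla P_t f|\le P_t|\nabla f|$ and the integrability bounds needed to run the argument with constants depending only on $D$.
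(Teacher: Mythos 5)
Your first step is broadly aligned with the paper's opening moves, though you omit a crucial reduction: the paper first invokes Lemma 3.7 of Keller--Mossel--Sen to assume WLOG that $A$ is monotone increasing, and only then uses the identity $\sum_i\mathcal I^{\mathcal G}_i(A)=(\mu^n)^+(A)$ (Lemma~\ref{lem:eqmono}), which is stated \emph{for increasing sets}. Your proposed inequality $(\mu^n)^+(A)\le\sum_i\mathcal I^{\mathcal G}_i(A)$ for arbitrary Borel $A$, derived from Lipschitz approximations and coarea, is delicate: the one-dimensional slices $A^z_i$ of a general Borel set can be badly behaved, and the Minkowski contents that enter the geometric influences do not control, term by term, the Lipschitz approximations of $\mathbf 1_A$. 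The monotonization step is what makes the slice analysis tractable; skipping it leaves a real gap.

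The second step is where you depart from the paper and where the genuine problem lies. You propose running a Langevin-semigroup / hypercontractivity argument directly for a general log-concave $\mu$. This does not work as stated. Log-concavity only gives $V''\ge 0$ for the potential $V=-\log\varphi_\mu$, which yields the commutation $|\nabla P_tf|\le P_t|\nabla f|$ but \emph{not} hypercontractivity with constants independent of $\mu$: the exponential and logistic measures, for instance, are log-concave but their semigroups are not hypercontractive at any positive time. Even in the cases treated in \cite{kellms12gi} (Boltzmann measures $\mu_\rho$), KMS do not run a semigroup argument on $(P_t)$ for $\mu_\rho$ directly; they transport to a reference setting. The paper sidesteps all of this by invoking Keller's KKL-type theorem for $\mathrm{Ent}$-influences (Theorem~\ref{th:kell}), which is itself proved by transporting Keller's unit-cube result; then the ``hard'' case $t\in(1/n,1/2]$ is handled by (a) applying the one-dimensional inequality $I_\mu\ge J$ slice-by-slice to get $\mathcal I^{\mathcal G}_i(A)\ge\mathcal I^J_i(A)$, (b) comparing $J$-influences to $\mathrm{Ent}$-influences via the essential monotonicity of $J/\mathrm{Ent}$ on $(0,1/2]$, and (c) feeding Keller's lower bound on $\max_i\mathcal I^{\mathrm{Ent}}_i(A)$ into this comparison. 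The dimension-free Theorem~\ref{th:dir} is used only in the easy case $t\le 1/n$. Your proposal conflates the two cases and replaces the $\mathrm{Ent}$-influence machinery by a semigroup estimate that is not available at the stated level of generality; as written, the argument would not go through for the full class of log-concave measures covered by the theorem.
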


\begin{rem}\label{rem:final}
Actually the conclusion of Theorem \ref{th:GI} holds under less restrictive conditions on the measure. In particular the log-concavity assumption can be removed, either by using a different
symmetrization argument than in \cite{kellms12gi} or by introducing another definition of 
the geometric influence based on notions of geometric measure theory. These modifications
require a substantial and technical work that will appear in the PhD dissertation of the second-named author. In the present paper, we simply explain how the argument of Keller, Mossel and
Sen can be adapted, putting forward the parts of the reasoning where the conditions on  $J$ are used.

\end{rem}

The next lemma follows from Proposition 1.3 in \cite{kellms12gi}.  It explains the connection between geometric influences and boundary measure for the uniform enlargement:
\begin{lem}\label{lem:eqmono}
Let $\mu$ be  as in Theorem~\ref{th:GI}. Let $A \subseteq \R^n$ be a monotone increasing set (in the following sense: if $x\in A$ and for all $i$, $x_i\le y_i$, then $y\in A$). Then
\begin{equation*}
\sum^{n}_{i=1}\mathcal{I}^{\mathcal{G}}_i(A) = \left(\mu^n\right)^+(A).
\end{equation*}
\end{lem}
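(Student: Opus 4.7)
My plan is to identify, for sufficiently smooth monotone sets, each $\mathcal{I}_i^{\mathcal{G}}(A)$ with the contribution of the $i$-th coordinate direction to the $\ell_\infty$ Minkowski content, and then sum. By a smoothing argument (discussed at the end), it suffices to establish the identity for monotone increasing sets $A$ whose boundary $\partial A$ is a $C^1$ hypersurface. For such an $A$, the introduction's boundary formula gives
$$(\mu^n)^+(A) = \int_{\partial A} \|n_A(x)\|_1\, \varphi_\mu^{\otimes n}(x)\, d\mathcal{H}_{n-1}(x),$$
where $n_A$ denotes the unit outward Euclidean normal. Since $A$ is monotone increasing, small displacements $x + t\, e_i$ with $t>0$ lie in $A$, which forces $\langle n_A, e_i\rangle \le 0$ at every regular boundary point. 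Consequently $\|n_A\|_1 = \sum_{i=1}^n |\langle n_A, e_i\rangle|$.

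Next, I would treat each direction separately. Monotonicity makes every slice $A_i^z$ upward-closed in $\R$, hence of the form $[a_i(z), +\infty)$ for some $a_i(z)\in[-\infty,+\infty]$, and its one-dimensional Minkowski content under $\mu_i$ equals $\varphi_\mu(a_i(z))$. Moreover $\partial A$ is locally the graph $\{x_i = a_i(x_{-i})\}$, on which the unit normal has $i$-th component $\pm 1/\sqrt{1+|\nabla a_i|^2}$; the area formula for a graph thus yields, for every nonnegative measurable $f$,
$$\int_{\partial A} f(x)\, |\langle n_A(x), e_i\rangle|\, d\mathcal{H}_{n-1}(x) = \int_{\R^{n-1}} f\bigl(z_1,\ldots,z_{i-1},a_i(z),z_i,\ldots,z_{n-1}\bigr)\, dz,$$
the two Jacobian factors cancelling. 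Applying this with $f=\varphi_\mu^{\otimes n}$ and recognizing the right-hand side as $\int \varphi_\mu(a_i(z))\,d\widehat{\mu}^i(z) = \int (\mu_i)^+(A_i^z)\,d\widehat{\mu}^i(z) = \mathcal{I}_i^{\mathcal{G}}(A)$, one obtains
$$\int_{\partial A} |\langle n_A, e_i\rangle|\, \varphi_\mu^{\otimes n}\, d\mathcal{H}_{n-1} = \mathcal{I}_i^{\mathcal{G}}(A).$$
Summing over $i$ gives the identity in the smooth case.

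The main obstacle is the approximation step: to pass from $C^1$ monotone sets to a general Borel monotone increasing set $A$, I would construct a sequence $A^{(k)}$ of smooth monotone sets by mollifying the threshold functions $a_i$ (which are monotone in each variable and, thanks to log-concavity of $\mu$, enjoy sufficient regularity for the standard perimeter-approximation machinery), and then verify simultaneous convergence $(\mu^n)^+(A^{(k)}) \to (\mu^n)^+(A)$ and $\mathcal{I}_i^{\mathcal{G}}(A^{(k)}) \to \mathcal{I}_i^{\mathcal{G}}(A)$ for every $i$. The $C^1$-boundedness of $\varphi_\mu$ provides the uniform control required to close this argument; this is essentially the reasoning behind Proposition~1.3 of \cite{kellms12gi}.
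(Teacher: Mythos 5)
The paper offers no proof of this lemma: it is stated as a direct consequence of Proposition~1.3 in Keller--Mossel--Sen \cite{kellms12gi}, so you are reconstructing an argument rather than reproducing one. Your core idea is the right one and matches the geometry behind the cited result: for the $\ell_\infty$ enlargement the boundary integrand involves $\|n_A\|_1 = \sum_i |\langle n_A, e_i\rangle|$, and for a monotone increasing set each coordinate line meets $\partial A$ at most once, so the graph/area formula converts the $i$-th summand into $\int \varphi_\mu(a_i(z))\,d\widehat\mu^i(z) = \mathcal{I}^{\mathcal{G}}_i(A)$. Summing gives the identity for smooth monotone sets.

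The genuine gap is in the approximation step, and two of your supporting claims there are off. First, log-concavity of $\mu$ gives you nothing about the regularity of the threshold functions $a_i$ --- these are determined by the Borel set $A$ alone, not by $\mu$. What you actually have is that each $a_i$ is monotone (non-increasing) in every coordinate of $z$, which already yields a.e.\ differentiability by Lebesgue's theorem; that is the correct source of regularity. Second, and more seriously, the passage to the limit is not addressed: Minkowski content is a $\liminf$ and is neither upper nor lower semicontinuous under the kind of mollification you propose, so the ``simultaneous convergence'' $(\mu^n)^+(A^{(k)}) \to (\mu^n)^+(A)$ and $\mathcal{I}^{\mathcal{G}}_i(A^{(k)}) \to \mathcal{I}^{\mathcal{G}}_i(A)$ requires an argument, not an assertion. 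In fact, the monotone structure makes it possible to bypass smooth approximation altogether: for a monotone increasing $A$, the $\ell_\infty$-enlargement satisfies $A_r \supseteq A - (r-\delta)(1,\dots,1)$ for every small $\delta>0$, so $(\mu^n)^+(A)$ can be computed directly as a diagonal-translate derivative, and by Fubini each slice only contributes at its threshold. An argument along those lines (rather than mollification) is what makes the lemma work for general Borel monotone sets, and it is where the $C^1$-boundedness of $\varphi_\mu$ is actually used, to differentiate under the integral sign. As written, your proposal identifies the correct smooth-case computation but does not close the approximation step.
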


\begin{proof}[Proof of Theorem \ref{th:GI}]
We follow the argument of Keller, Mossel and Sen. Assume $n\ge 2$. 
Let $A \subseteq \R^n$ be as in the statement of the theorem. Lemma 3.7 of \cite{kellms12gi} ensures that, without loss of generality, one can assume that  $A$ is increasing. 
Set $t=\mu^n(A)$. Since $A$ and $A^c$ have the same influences, we may assume that $t\le 1/2$
($A^c$ is monotone decreasing, but passing to its image by the symmetry with respect to the origin
we may ensure that we work is an increasing set of measure at most $1/2$).
We distinguish two cases :

\textit{First case :} $t \leq 1/n$.
Thanks to Lemma \ref{lem:eqmono} and to the isoperimetric inequality of Theorem \ref{th:dir}
\begin{equation*}
\sum^n_{i=1}\mathcal{I}^{\mathcal{G}}_i(A) = \left(\mu^n\right)^+(A) \geq \frac{1}{c_D} J\left(\mu^n(A)\right)= \frac{1}{c_D} J(t),
\end{equation*}
with $c_D = \frac{2D^2}{(\log 2)^2}$. Lemma \ref{lem:equiv} asserts that $s\mapsto J(s)/s$
is essentially non-increasing on $(0,1/2]$ with constant $D/\log 2$, therefore $J(t)/t\ge  n J(1/n)\log2/D$. Consequently

\begin{equation*}
\max_i \mathcal{I}^{\mathcal{G}}_i(A) \geq \frac1n\sum^n_{i=1}\mathcal{I}^{\mathcal{G}}_i(A) \geq 
\frac{\log 2}{D c_D} tJ\left(\frac1n\right)\ge 
\frac{\left(\log 2\right)^3}{2D^3}t(1-t)J\left(\frac1n\right).
\end{equation*}

\textit{Second  case :} $t\in(1/n,1/2]$. 
The argument uses three main ingredients. The first one is the isoperimetric inequality $I_\mu\ge J$, which implies that for all $i\le n$,
\begin{equation}\label{eq:ingredient1}
\mathcal{I}^{\mathcal{G}}_i(A)
= \int_{\R^{n-1}}\mu^+\left(A^z_i\right)d\mu^{n-1}(z)
\ge \int_{\R^{n-1}}J\left(\mu\left(A^z_i\right)\right)d\mu^{n-1}(z)
= \mathcal{I}^{J}_i(A).
\end{equation}
The second ingredient is a comparison between $J$-influences and $\mathrm{Ent}$-influences:
observe that $\mathrm{Ent}$ is symmetric with respect to $1/2$ and is increasing and one-to-one
on $[0,1/2]$. Let $\mathrm{Ent}^{-1}:[0,\log 2]\to  [0,1/2]$ be its reciprocal function. Then for all $i\le n$,
\begin{equation}\label{eq:ingredient2}
\mathcal{I}^{J}_i(A) \ge \frac{J(s)}{2D} \quad \mathrm{with} \quad  s:=\mathrm{Ent}^{-1} \left( 
\frac{\mathcal{I}^{\mathrm{Ent}}_i(A)}{2}\right) \in \left[0,\frac12\right].
\end{equation}
We postpone the proof of this inequality, and explain how to conclude. The last ingredient is Keller's version of the KKL inequality (Theorem \ref{th:kell}).  It provides  an index
$i$ such that $\mathcal{I}^{\mathrm{Ent}}_i(A)\ge \gamma t(1-t) \log(n)/n$, where $\gamma>0$ is a universal constant. Observe for further use that necessarily $\gamma<8$ (indeed, $\mathrm Ent\le \log 2$ and one can choose $n=2$ and $t=1/2$ in Keller's theorem). Let us show that 
the $i$-th coordinate has a large geometric influence.

Observe that for every $y \in \left[0,\frac12\right]\subset[0,\log 2]$,
$\theta(y) := \frac y{2\log\frac1y} \leq \mathrm{Ent}^{-1}(y)$.
Since $\mathcal{I}^{\mathrm{Ent}}_i(A)/2\le \log(2)/2\le 1/2$, and $\theta$ is increasing on $(0,1)$,
\begin{eqnarray*}
s& = &\mathrm{Ent}^{-1}\left(\frac{\mathcal{I}^{\mathrm{Ent}}_i(A)}{2}\right) \geq \theta\left(\frac{\mathcal{I}^{\mathrm{Ent}}_i(A)}{2}\right) \geq \theta\left(\frac \gamma2t(1-t)\frac{\log n}n\right)\\
 &=& \frac{\gamma t(1-t)}{4n}\frac{\log n}{\log\frac{2n}{\gamma t(1-t)\log n}} \geq
   \frac{\gamma t(1-t)}{4n}\times\frac{\log n}{\log\frac{4n^2}{\gamma \log n}}, 
\end{eqnarray*}
where the latter inequality relies on $t(1-t)\ge (1-t)/n\ge 1/(2n)$.
Since $\gamma\le 8$, the last fraction in the lower bound of $s$ is a positive function of $n\ge 2$ with a positive limit when $n$ tends to infinity. Hence there exists $c=c(\gamma)>0$ such that 
$s\ge ct(1-t)/n$. 
It remains to combine this estimate with $\mathcal{I}^{\mathcal{G}}_i(A) \geq J(s)/(2D)$, a consequence of \eqref{eq:ingredient1} and \eqref{eq:ingredient2}:

If $s\leq \frac1n$, then we also use Lemma \ref{lem:equiv}, which asserts  that $u\mapsto J(u)/u$ is essentially non-increasing on $\left(0,\frac12\right]$ with constant $\frac D{\log 2}$: 
\begin{equation*}
\mathcal{I}^{\mathcal{G}}_i(A) \geq \frac{J(s)}{2D} \geq \frac{\log 2}{2D^2}\, s\,\frac{ J\left(\frac1n\right)}{\frac1n} \geq \frac{\log 2 }{2D^2}\, ct(1-t)J\left(\frac1n\right).
\end{equation*}

 If $s > \frac1n$ then, using the fact that $J$ is essentially non-decreasing on $\left(0,\frac12\right)$ with constant $\frac{2D}{e\log 2}$ (see Lemma \ref{lem:tech}), we get 
\begin{equation*}
\mathcal{I}^{\mathcal{G}}_i(A) \geq \frac{J(s)}{2D} \geq \frac{e\log 2}{4D^2}J\left(\frac1n\right) \geq \frac{e\log 2}{D^2} t(1-t)J\left(\frac1n\right).
\end{equation*}

Eventually, we give a proof for \eqref{eq:ingredient2}. By hypothesis, $J/J_1$ is essentially non-decreasing with constant $D$, where $J_1(x)=x\log(1/x)$. 
 Observe that for $x\in \left[0,\frac12\right]$,  
 $$J_1(x) \leq \mathrm{Ent}(x) = J_1(x) + J_1(1-x) \leq 2J_1(x).$$ It follows that $\frac {J}{\mathrm{Ent}}$ is essentially non-decreasing on $\left(0,\frac12\right]$ with constant $2D$. 
Recall that 
$s \in \left(0,\frac12\right)$ verifies 
$\mathrm{Ent}\left(s\right) = \mathcal{I}^{\mathrm{Ent}}_i(A)/2$. Note  that, if $x \notin \left[s,1-s\right]$, then $\mathrm{Ent}(x) < \mathcal{I}^{\mathrm{Ent}}_i(A)/2$. This yields
\begin{align*}
\int_{\mu\left(A^z_i\right)\in \left[s,1-s\right]}\mathrm{Ent}\left(\mu\left(A^z_i\right)\right)d\mu^{n-1}(z)
& = \mathcal{I}^{\mathrm{Ent}}_i(A) - \int_{\mu\left(A^z_i\right)\notin \left[s,1-s\right]}\mathrm{Ent}\left(\mu\left(A^z_i\right)\right)d\mu^{n-1}(z)
 \geq \frac{\mathcal{I}^{\mathrm{Ent}}_i(A)}2.
\end{align*}
Therefore, using in addition the symmetry with respect to $1/2$ of $J$ and $\mathrm{Ent}$ and
the fact that $\frac J{\mathrm{Ent}}$ is essentially non-decreasing on  $(0,1/2]$ with constant $2D$, we get
\begin{align*}
\mathcal{I}^{J}_i(A)
& \geq \int_{\mu\left(A^z_i\right)\in \left[s,1-s\right]}J\left(\mu\left(A^z_i\right)\right)d\mu^{n-1}(z) \\
& = \int_{\mu\left(A^z_i\right)\in \left[s,1-s\right]}J\big(\min(\mu\left(A^z_i\right),1-\mu\left(A^z_i\right))\big)d\mu^{n-1}(z) \\
& \geq \frac1{2D} \frac{J(s)}{\mathrm{Ent}(s)} \int_{\mu(A^z_i)\in [s,1-s]} \mathrm{Ent}\big(\min(\mu(A^z_i),1-\mu(A^z_i)) \big)d\mu^{n-1}(z) \\
& =\frac1{2D}\frac{J(s)}{\mathrm{Ent}(s)}\int_{\mu(A^z_i)\in [s,1-s]}\mathrm{Ent}\big(\mu(A^z_i)\big) d\mu^{n-1}(z) \\
 & \geq  \frac1{2D}\frac{J\left(s\right)}{\mathrm{Ent}\left(s\right)}\frac{\mathcal{I}^{\mathrm{Ent}} _i(A)}2 = \frac{J\left(s\right)}{2D}.
\end{align*}
The proof is  complete.

\end{proof}


\bigskip\noindent
F. Barthe, B. Huou: Institut de Math\'ematiques de Toulouse ; UMR5219. Universit\'e de Toulouse ; CNRS.  UPS IMT, F-31062 Toulouse Cedex 9, France

\end{document}